	\def\@cite#1#2{[\textbf{#1}\if@tempswa , #2\fi]}	%text
	\def\@biblabel#1{[#1]}								%bibliography
\numberwithin{equation}{section}
\numberwithin{figure}{section}
\newtheorem {theorem}{Theorem}[section]
\newtheorem {lemma}[theorem]{Lemma}
\theoremstyle{definition}
\newtheorem{definition}[theorem]{Definition}
\newtheorem {remark}[theorem]{Remark}
\newtheorem {example}[theorem]{Example}
\renewcommand{\H}{\mathbb{H}}
\newcommand{\R}{\mathbb{R}}
\renewcommand{\S}{\mathbb{S}}
\newcommand{\dS}{\mathrm{d}\mathbb{S}}
\newcommand{\cF}{\mathcal{F}}
\newcommand{\cC}{\mathcal{C}}
\newcommand{\by}{\mathbf{y}}
\newcommand{\bx}{\mathbf{x}}
\newcommand{\bz}{\mathbf{z}}
\newcommand{\bu}{\mathbf{u}}
\newcommand{\bn}{\mathbf{n}}
\newcommand{\be}{\mathbf{e}}
\newcommand{\bv}{\mathbf{v}}
\newcommand{\bw}{\mathbf{w}}
\newcommand{\bo}{\mathbf{o}}
\DeclareMathOperator{\Vol}{Vol}
\DeclareMathOperator{\rad}{rad}
\DeclareMathOperator{\roll}{roll}
\DeclareMathOperator{\interior}{int}
\DeclareMathOperator{\closure}{cl}
\DeclareMathOperator{\bd}{bd}
\DeclareMathOperator{\as}{as}
\DeclareMathOperator{\Sp}{Sp}
\DeclareMathOperator{\mcd}{mcd}
\newcommand{\dint}{\mathrm{d}}
\renewcommand{\phi}{\varphi}
\begin{document}

\title{\bfseries Floating Bodies and Duality \\ in Spaces of Constant Curvature}

\author{%
    Florian Besau\footnotemark[1]%
    \and Elisabeth~M.~Werner\footnotemark[2]%
}

\date{}
\renewcommand{\thefootnote}{\fnsymbol{footnote}}
\footnotetext[1]{%
    Technische Universität Wien, Austria. Email: florian.besau@tuwien.ac.at
}

\footnotetext[2]{%
    Case Western Reserve University, USA. Email: elisabeth.werner@case.edu
}

\maketitle

\begin{abstract}\noindent
    We investigate a natural analog to Lutwak's $p$-affine surface area in $d$-dimensional spherical, hyperbolic and de Sitter space. In particular, we show that these curvature measures appear naturally as the volume derivative of floating bodies of non-Euclidean convex bodies conjugated by duality, such as spherical, hyperbolic and de Sitter convex bodies.
    
    We provide a unifying framework by establishing a real-analytic version of this relation controlled by the constant curvature of the $d$-dimensional real space form. These new curvature measures relate in two distinctly different ways to curvature measures on Euclidean space, one of which is Lutwak's centro-affine invariant $p$-affine surface area, and the other is related to a rigid-motion invariant curvature measure that appears naturally as the volume derivative of Schneider's mean-width separation body.

    \smallskip\noindent
    \textbf{Keywords.} affine surface area, $L_p$-affine surface area, floating body, weighted floating body, spherical convex body, hyperbolic convex body, de Sitter convex body, illumination body, separation body

    \smallskip\noindent
    \textbf{MSC 2020.} Primary  52A20; Secondary 28A75, 52A55, 53A35.
\end{abstract}

\section{Introduction}\label{sec:introduction}

The \emph{floating body} and the \emph{affine surface area} of a convex body are closely tied constructions in affine geometry. The concept of affine floating bodies dates back to Dupin \cite{Dupin:1814, Dupin:1822} at the start of the 19th century and can be seen, together with the introduction of the affine normal by Transon \cite{Transon:1841} in 1841, as the first results in affine differential geometry. Much later, at the start of the 20th century, Tzitzéica introduced affine spheres and subsequently more and more geometers investigated affine invariant properties of curves and surfaces in the spirit of Klein's Erlangen program. See, for example, the introduction in the monograph \cite{LSZH:2015} and the references therein for a brief history on the development of affine differential geometry in the 20th century.

\smallskip
The name ``floating body'' or ``floating surface'' is coined by the fact, that by Archimedes' principle a body floating in a liquid has always the same ratio of volume above and below the water surface independent of its orientation.
The floating bodies associated with a given convex body generate a one-parameter family, controlled by the ratio between the above and below part. This family of convex bodies converge from the inside to the original body in an affine covariant way.
Taking the volume derivative with respect to the family of floating bodies gives rise to Blaschke's affine surface area. This relation between the floating body and the affine surface area was first established by Blaschke in 1923 for smooth convex bodies in dimension 2 and 3 and generalized by Leichtweiss \cite{LW1:1986} to general dimensions under some curvature conditions. Finally, in work of Schütt and Werner \cite{SW:1990}, it was established that the volume derivative of the \emph{convex floating body} exists for all convex bodies and gives rise to the affine surface area for general convex bodies. Other extensions were obtained by Leichtweiss \cite{LW2:1986,LW:1989} and by a completely different method by Lutwak \cite{Lutwak:1991} who extended an approach by Petty \cite{Petty:1974}. It later turned out that all these extensions give the same notion, proved in \cite{DH:1995, Schutt:1993} respectively. Blaschke's affine surface area was characterized as essentially the only upper-semicontinuous equi-affine invariant valuation on all convex bodies by Ludwig and Reitzner \cite{LudRei:1999}. See \cite{LM:2023} for a recent survey on affine invariant valuations.
The floating body is also connected to Ulam's problem on convex bodies floating in equilibrium, with recently has seen major progress due to Ryabogin \cite{Ryabogin:2022,Ryabogin:2023} and Florentin et al. \cite{FSWZ:2022}.

\smallskip
We showed in \cite{BW:2016, BW:2018} that the convex floating body of a convex body naturally extends to all spaces of constant curvature, that is, real space forms. We established a real analytic analog of the affine surface area by considering the volume derivative of our intrinsic notion of convex floating body in the space form. Applications of this non-Euclidean floating body and its associated curvature measure were obtained in \cite{BLW:2018,BRT:2021,BT:2020}, where the asymptotic behavior of random volume approximation of non-Euclidean convex bodies was investigated. These results also align with a new interest in stochastic geometry to explore spaces of constant curvatures, see, for example, \cite{BHRS:2017, BGRSTW:2023, BHT:2023, GKT:2022, HHT:2021, HOOT:2023, KRT:2022, OT:2023}. Also see \cite{BS:2020,BS:2022,BS:2023,Fodor:2021} for more recent results on convex bodies in spaces of constant curvature.

\smallskip
In this article we aim to combine the floating body construction with the natural duality on non-Euclidean convex bodies in spaces of constant curvature to investigate the volume derivative of the family of floating bodies \emph{conjugated} by duality. If the curvature is zero, i.e., if we consider a Euclidean vector space of dimension $d\geq 2$, this volume derivative was studied by Meyer and Werner \cite{MW:2000}, where it was shown that it gives rise to $L_p$-affine surface area \cite{Lutwak:1996} for $p=-d/(d+2)$, a centro-affine invariant curvature measure. For a recent excellent presentation of how centro-affine differential geometry is at play in the Brunn--Minkowski theory we refer to \cite{Milman:2023}.

\smallskip
Our explorations in this paper lead to a natural non-Euclidean generalization of $L_p$-affine surface area for $p=-d/(d+2)$ in any $d$ dimensional real space form. We put additional emphasis on the spherical and hyperbolic case. We expect that these new curvature measures in spherical, hyperbolic and de Sitter spaces will lead to new insights, in particular in view of isoperimetric inequalities and applications in stochastic geometry.

\subsection{Statement of principal results}

For $\diamond\in\{s,h\}$, the floating area $\Omega_1^\diamond(K)$ was defined in \cite{BW:2016,BW:2018} as an analog of Blaschke's affine surface area for spherical and hyperbolic convex bodies. Indeed, for $\diamond =s$, if $K$ is a spherical convex body of the $d$-dimensional unit sphere $\S^d$, or, for $\diamond=h$, if $K$ is a hyperbolic convex body in the $d$-dimensional hyperbolic space $\H^d$, then the floating area is defined by
\begin{equation}\label{eqn:floating_area}
    \Omega_1^\diamond(K) := \int_{\bd K} H_{d-1}^\diamond(K,\bu)^{\frac{1}{d+1}}\, \Vol_{\bd K}^\diamond(\dint \bu) \in [0,+\infty),
\end{equation}
where $H_{d-1}^\diamond(K,\bu)$ is the intrinsic generalized Gauss--Kronecker curvature of $\bd K$ and $\Vol_{\bd K}^\diamond$ is the intrinsic surface area measure of the boundary $\bd K$. Here by ``intrinsic'' we refer to the metric structure imposed on $\bd K$ as an embedded hypersurface of the spherical, respectively hyperbolic, space.

For our first result we briefly recall the construction of floating bodies of a convex body. Following the definition in \cite{SW:1990}, for $\delta>0$, we define the floating body $K_\delta$ as the intersection of all closed half-spaces $H^+$, such that the complementary closed half-spaces $H^-$ intersect $K$ in a set of volume no more than $\delta$, that is,
\begin{equation*}
    K_\delta := \bigcap \{H^- : \Vol_d(K\cap H^+) \leq \delta\} \subseteq K.
\end{equation*}
Schütt and Werner \cite{SW:1990} showed that one may define the affine surface area $\as_1(K)$ for general convex bodies as the volume derivative of the floating body $K_\delta$ in the following way
\begin{equation}\label{eqn:limit_affine_surface_area}
    a_d^{\frac{2}{d+1}} \lim_{\delta\to 0^+} \frac{ \Vol_d(K)-\Vol_d(K_\delta)}{\delta^{\frac{2}{d+1}}} = \int_{\bd K} \kappa_o(\bx)^{\frac{1}{d+1}} \, C_K(\dint \bx) =: \as_1(K),
\end{equation}
where the constant $a_d$ is defined by \eqref{eqn:ad}, $\kappa_o(K,\cdot)$ is a centro-affine invariant curvature function and $C_K$ is the centro-affine invariant cone-volume measure on $\bd K$, see Section \ref{sec:back_polar}. In \cite{BW:2016,BW:2018} we motivated \eqref{eqn:floating_area} by establishing a spherical and hyperbolic analog of \eqref{eqn:limit_affine_surface_area}, which yields that
\begin{equation}\label{eqn:limitFloat}
    a_d^{\frac{2}{d+1}} \lim_{\delta\to 0^+} \frac{\Vol_d^\diamond(K \setminus \mathcal{F}_\delta^{\diamond} K)}{\delta^{\frac{2}{d+1}}} = \Omega_1^{\diamond}(K),
\end{equation}
if $K\subset \S^d$ is a spherical convex body, or $K\subset \H^d$ is a hyperbolic convex body and, for $\diamond\in\{s,h\}$. Here $\mathcal{F}_\delta^\diamond$ denotes the corresponding intrinsic notion of floating body, see Section~\ref{sec:spherical} respectively Section~\ref{sec:hyperbolic}. 

In this article we show that \eqref{eqn:limitFloat} also holds true in the de Sitter space: for closed sets $L\subset \dS^d_1$ that have a space-like boundary, are proper convex with respect to space-like geodesic arcs between points in $L$, and future-directed complete--we call these sets \emph{proper future-directed de Sitter convex bodies}. These de Sitter convex bodies are in a one-to-one correspondence with hyperbolic convex bodies via the hyperbolic duality mapping, see Section~\ref{sec:hyperbolic}.

Note that the homogeneous space $\dS^d_1$ is an oriented Lorentz manifold and carries a natural volume measure. Although a future-directed de Sitter convex body $L$ is unbounded and therefore has infinite volume in general, for any past-directed closed half-space $H^-\subset \dS^d_1$ the closed caps $L\cap H^-$ are either bounded or empty and therefore have finite volume. As a consequence the floating body $\mathcal{F}^h_\delta L\subseteq L$ again yields a family of de Sitter convex bodies, that approach $L$ from the inside as $\delta\to 0^+$. Moreover, $L\setminus \mathcal{F}_\delta^h L$ is bounded for all $\delta>0$, and therefore has finite volume. This allows us to establish in Theorem \ref{thm:hyperbolic_float_limit} that \eqref{eqn:limitFloat} holds true also for positiv-time directed, or past-directed, de Sitter convex bodies.

\begin{figure}[t]
    \centering
    \begin{tikzpicture}
        \node at (0,0) {\includegraphics[width=7cm]{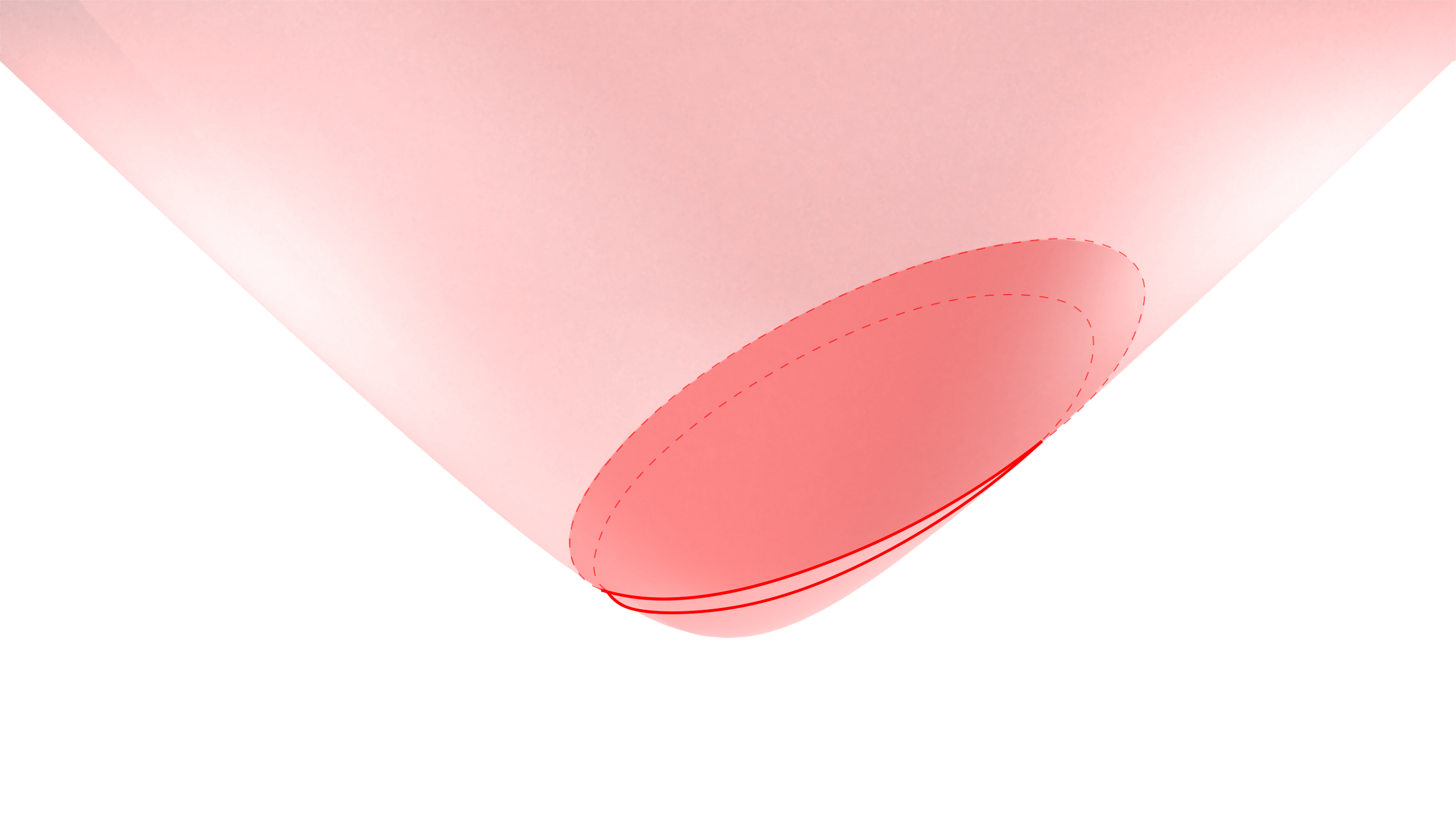}};
        \node at (0.7,1.) {$\mathcal{F}^{h,*}_\delta K$};
        \node at (0.4,-0.3) {$K$};
        \node at (-2, 1) {$\H^2$};
    \end{tikzpicture}
    \begin{tikzpicture}
        \node at (0,0) {\includegraphics[width=7cm]{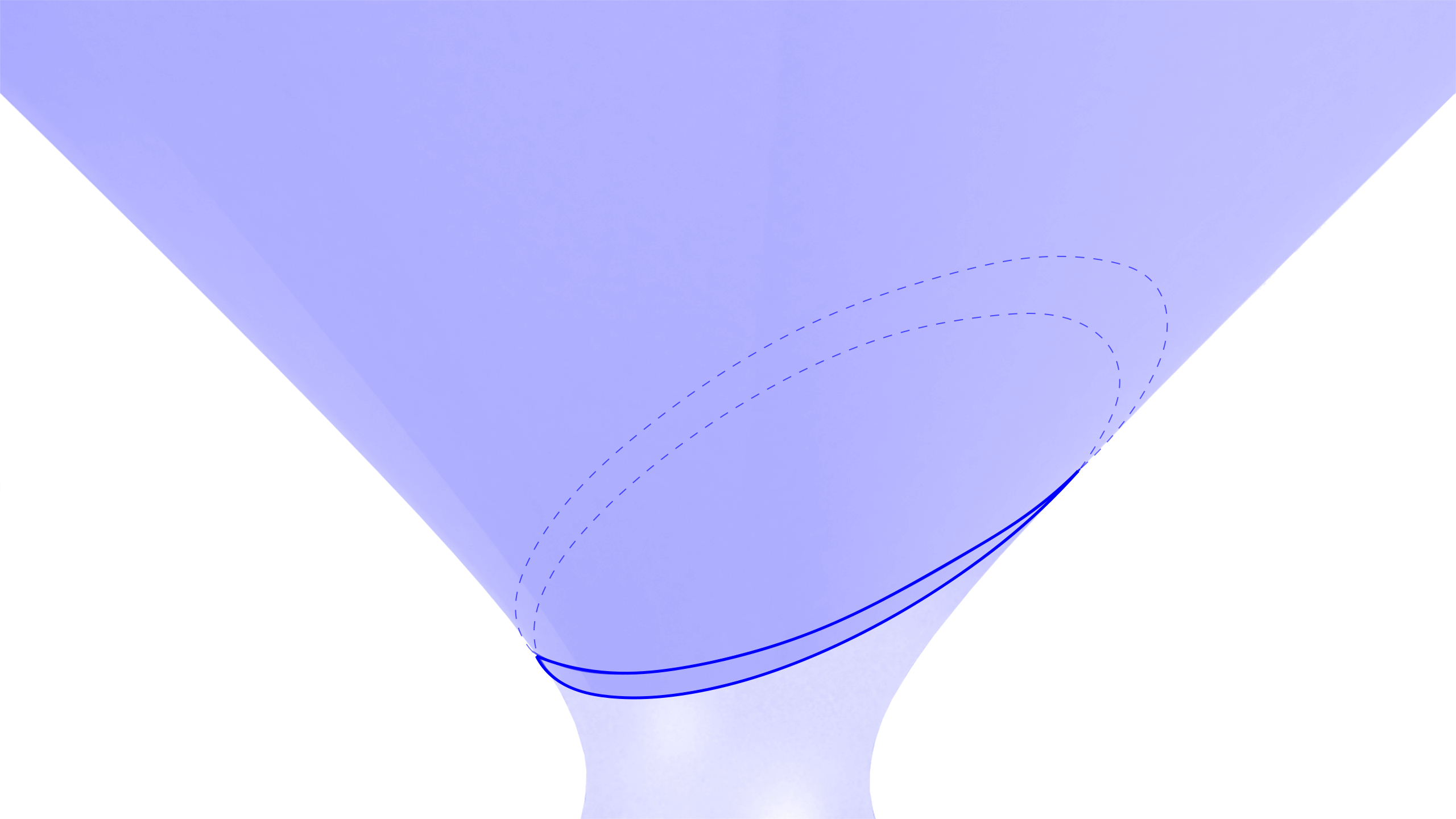}};
        \node at (0.4,-0.75) {$\mathcal{F}^h_\delta K^*$};
        \node at (0.4,-1.45) {$K^*$};
        \node at (-1.2, -1.7) {$\dS_1^2$};
    \end{tikzpicture}
    \caption{Hyperboloid model of $\H^2$ (left) and $\dS^2_1$ (right) in the Lorentz--Minkowski space $\R^{2,1}$. $K\subset \H^2$ is a hyperbolic convex body and $\mathcal{F}_\delta^{*,h} K \supseteq K$ is its dual floating body. $K^*\subset \dS^2_1$ is the hyperbolic dual of $K$ and $\mathcal{F}^{h}_\delta K^* = \left(\mathcal{F}_\delta^{*,h} K\right)^* \subseteq K^*$ is its floating body. $K^*$ and $\mathcal{F}^h_\delta K^*$ are proper future-directed de Sitter convex bodies and $K^*\setminus \mathcal{F}^h_\delta K^*$ is bounded in $\dS^2_1$ and has finite hyperbolic volume.}
    \label{fig:hyperboloid_float}
\end{figure}

\begin{figure}[t]
    \centering
    \begin{tikzpicture}
        \node at (0,0) {\includegraphics[width=7cm]{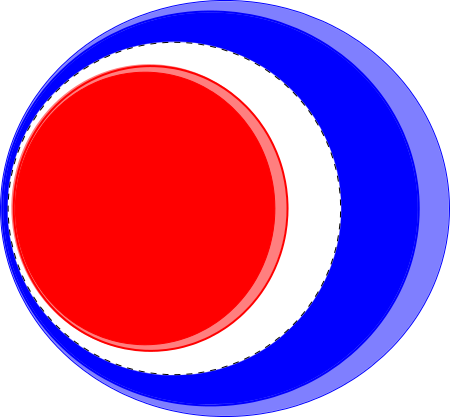}};
        \node at (-1.5,0) {$K$};
        \node at (3.8,1) {$K^*$};
        \node[rotate=60] at (2.3,-1) {$\mathcal{F}^h_{\delta} K^*$};
        \node[rotate=60] at (1.1,-1) {$\mathcal{F}^{h,*}_\delta K$};
        \node at (1.25,1.15) {$\S^1$};
    \end{tikzpicture}
    \caption{Projective model of a hyperbolic floating body conjugated by duality $\mathcal{F}^{h,*}_{\delta} K\supseteq K$ in $\H^2$ and its hyperbolic dual $\mathcal{F}^h_{\delta} K^*\subseteq K^*$ in the associated projective model of de Sitter space $\dS_1^2$.}
    \label{fig:projective_float}
\end{figure}

\medskip
If a convex body $K\subset \R^d$ contains the origin in the interior, then the \emph{polar body} $K^\circ$ is defined by 
\begin{equation*}
    K^\circ = \{\by \in\R^d: \bx\cdot \by \leq 1 \text{ for all $\bx\in K$}\}.
\end{equation*}
Meyer and Werner \cite[Thm.\ 8]{MW:2000} showed that volume derivative of the floating body conjugate by polarity gives rise to another centro-affine invariant curvature measure. Indeed, if $K\subset \R^d$ is a convex body that contains the origin in the interior and is of class $\cC^2_+$, that is, the boundary $\bd K$ is a twice differentiable embedded hypersurface of $\R^d$ and the Gauss--Kronecker curvature is strictly positive in every boundary point, then
\begin{equation}\label{eqn:limit_polar_affine_surface_area}
    a_d^{\frac{2}{d+1}} \lim_{\delta\to 0^+} \frac{\Vol_d((K^\circ_\delta)^\circ) - \Vol_d(K)}{\delta^{\frac{2}{d+1}}} = \int_{\bd K} \kappa_o(\bx)^{-\frac{1}{d+1}} \, C_K(\dint \bx) 
    =: \as_{-d/(d+2)}(K).
\end{equation}
Here $\as_p$, for $p>-d$, refers to a centro-affine invariant family of $p$-affine surface areas, see Section~\ref{sec:p_affine_surface}. 

In this article we establish a weighted extension of \eqref{eqn:limit_polar_affine_surface_area}, see Theorem \ref{thm:main_weighted}, and apply it to derive non-Euclidean analogs of \eqref{eqn:limit_polar_affine_surface_area} by conjugating the floating body $\mathcal{F}^\diamond_\delta K$ with the natural duality in the spherical and hyperbolic setting. See Section~\ref{sec:hyperbolic} and Figures~\ref{fig:hyperboloid_float} and \ref{fig:projective_float} for illustrations of the hyperbolic setting in the hyperboloid and projective model.

\begin{theorem}\label{thm:limit_polar_affine_surface_area}
    Let $K$ be a spherical, hyperbolic, or proper future-directed de Sitter convex body of class $\cC^2_+$. Then
    \begin{equation*}
        a_d^{\frac{2}{d+1}} \lim_{\delta\to 0^+} \frac{\Vol_d^\diamond(\mathcal{F}_\delta^{\diamond, *} K \setminus K)}{\delta^{\frac{2}{d+1}}}
        = \int_{\bd K} H_{d-1}^{\diamond}(K,\bu)^{-\frac{1}{d+1}} \, \Vol_{\bd K}^\diamond(\dint \bu) =: \Omega_{-d/(d+2)}^\diamond(K),
    \end{equation*}
    where, for $\diamond\in\{s,h\}$, $\mathcal{F}_\delta^{\diamond, *} K$ is the intrinsic floating body conjugate by duality, that is, $\mathcal{F}_\delta^{\diamond,*} := (\mathcal{F}_\delta^{\diamond} K^*)^*$, and the constant $a_d$ is defined by \eqref{eqn:ad}.
\end{theorem}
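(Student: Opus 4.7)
My plan is to reduce the theorem to a weighted Euclidean analog of the Meyer--Werner limit \eqref{eqn:limit_polar_affine_surface_area}, namely Theorem~\ref{thm:main_weighted}, and then to identify the resulting integral with the intrinsic expression $\Omega_{-d/(d+2)}^\diamond(K)$. For each of the three geometries I would work in the standard affine chart in which the intrinsic duality becomes Euclidean polarity: a gnomonic chart for $\S^d$, and the Beltrami--Klein projective model for $\H^d$ together with its companion projective model for $\dS^d_1$. In such a chart the intrinsic volume admits the form $\Vol^\diamond_d = \phi(\bx)\,\dint\bx$ for an explicit positive real-analytic density $\phi$, and the intrinsic floating body $\mathcal{F}^\diamond_\delta$ becomes the $\phi$-weighted Euclidean floating body.

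From $K\subseteq\mathcal{F}^{\diamond,*}_\delta K = (\mathcal{F}^\diamond_\delta K^*)^*$, I would write
\begin{equation*}
    \Vol^\diamond_d\bigl(\mathcal{F}^{\diamond,*}_\delta K\setminus K\bigr) = \Vol^\diamond_d\bigl((\mathcal{F}^\diamond_\delta K^*)^*\bigr) - \Vol^\diamond_d(K),
\end{equation*}
which, after passing to the chart, is exactly the set-up of Theorem~\ref{thm:main_weighted}: a difference of $\phi$-weighted Lebesgue volumes of a body and the Euclidean polar of the $\phi$-weighted floating body of its Euclidean polar. Theorem~\ref{thm:main_weighted} then produces
\begin{equation*}
    a_d^{\frac{2}{d+1}}\lim_{\delta\to 0^+}\delta^{-\frac{2}{d+1}}\Vol^\diamond_d\bigl(\mathcal{F}^{\diamond,*}_\delta K\setminus K\bigr) = \int_{\bd K} w(\bx)\,\kappa_o(\bx)^{-\frac{1}{d+1}}\,C_K(\dint\bx),
\end{equation*}
with $w$ an explicit density built from $\phi$ evaluated at $\bx$ and at the polar point, together with the Jacobian of Euclidean polarity. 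The centro-affine exponent $-1/(d+1)$ is dictated, as in \cite{MW:2000}, by the cap-covering estimates underlying the weighted result. Using the same model-to-intrinsic conversion identities for $\kappa_o$, $C_K$ and $\Vol^\diamond_{\bd K}$ that were already developed in \cite{BW:2016,BW:2018} for the floating area \eqref{eqn:floating_area}, I would verify that $w$ and the Jacobian combine so that the product collapses to $H^\diamond_{d-1}(K,\bu)^{-\frac{1}{d+1}}\,\Vol^\diamond_{\bd K}(\dint\bu)$; the $\mathcal{C}^2_+$ assumption ensures that all of these quantities make sense pointwise on $\bd K$.

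The principal difficulty is the de Sitter case, in which $K\subset\dS^d_1$ is unbounded and the projective-model weight $\phi$ is not globally integrable, so Theorem~\ref{thm:main_weighted} cannot be applied to $K$ as a black box. However, working with the bounded dual $K^*\subset\H^d$, the floating cap $K^*\cap H^-$ is always bounded and the cap-covering analysis underlying Theorem~\ref{thm:main_weighted} is local, taking place in a bounded neighborhood of $\bd K^*$ on which $\phi$ is smooth and bounded. Together with the boundedness of $\mathcal{F}^{h,*}_\delta K\setminus K$ in $\dS^d_1$ for every $\delta > 0$, this allows one to localize the computation and then transfer the resulting integral from $\bd K^*$ back to $\bd K$ via the $\mathcal{C}^2_+$ hyperbolic--de Sitter duality map described in Section~\ref{sec:hyperbolic}. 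Carrying out this localization rigorously and reconciling it with the direct chart application of Theorem~\ref{thm:main_weighted} is the main technical hurdle I anticipate.
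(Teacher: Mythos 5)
Your strategy --- pass to a gnomonic/projective chart, apply the weighted polar floating body limit Theorem~\ref{thm:main_weighted}, and then identify the resulting density with $H^\diamond_{d-1}(K,\bu)^{-1/(d+1)}\Vol^\diamond_{\bd K}$ via the model-to-intrinsic conversion formulas --- is exactly the paper's route (see the proofs of Theorems~\ref{thm:sphere_main} and~\ref{thm:hyperbolic_main}, which together constitute the proof of Theorem~\ref{thm:limit_polar_affine_surface_area}), with Lemma~\ref{lem:integral_formula} doing the transfer from $\bd\overline{K}^\circ$ back to $\bd\overline{K}$ that you describe. The technical hurdle you anticipate in the de Sitter case is smaller than you fear: properness (automatic from $\cC^2_+$) places $K$ in an open de Sitter half-space where the projective model turns the dual $K^*$ into a bounded Euclidean body $\overline{K}^\circ$, and the proof of Theorem~\ref{thm:main_weighted} already works with the finite difference $\Vol_d^\psi((K_\delta^\phi)^\circ)-\Vol_d^\psi(K^\circ)$ written as a polar-coordinate integral over a bounded radial shell, so the global non-integrability of $\phi_h$ never enters; the paper invokes Lemma~\ref{lem:prop_deSitter}(iii) and then proceeds exactly as in the spherical and hyperbolic cases.
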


Basic properties of $\as_{-d/(d+2)}$, obtained in \cite{Hug1:1996,Ludwig:2001,Ludwig:2010, Schutt:1993}, are also established for these new non-Euclidean curvature measures $\Omega_{-d/(d+2)}^\diamond$ in the following.
\begin{theorem}\label{thm:dual_formula_non-euclidean}
    Let $\diamond \in \{s,h\}$. Then
    \begin{enumerate}
     \item[i)] $\Omega_{-d/(d+2)}^\diamond$ is an intrinsic valuation on spherical, respectively hyperbolic and de Sitter convex bodies, that is, if $K,L$ are convex bodies such that $K\cup L$ is also a convex body then
     \begin{equation*}
        \Omega_{-d/(d+2)}^\diamond(K)+\Omega_{-d/(d+2)}^\diamond(L) = \Omega_{-d/(d+2)}^\diamond(K\cup L) + \Omega_{-d/(d+2)}^\diamond(K\cap L),
     \end{equation*}
     and if $g$ is an isometry of the space, then $\Omega_{-d/(d+2)}^\diamond(g(K)) = \Omega_{-d/(d+2)}^\diamond(K)$.
     
     \item[ii)] $\Omega_{-d/(d+2)}^\diamond$ is lower semi-continuous with respect to the Hausdorff distance.
     
     \item[iii)] If $K$ is of class $\cC^2_+$, then
     \begin{equation*}
        \Omega_{-d/(d+2)}^\diamond(K) = \int_{\bd K^*} H_{d-1}^\diamond(K^*,\bu)^{\frac{d+2}{d+1}} \, \Vol_{\bd K^*}^\diamond(\dint \bu).
     \end{equation*}
    \end{enumerate}
\end{theorem}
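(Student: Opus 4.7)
The plan is to prove part (iii) first on $\cC^2_+$ bodies, since the dual formula yields a second integral representation that is instrumental for establishing valuation and lower semi-continuity, and then to derive (i) and (ii) from it. For part (iii), I would realize the non-Euclidean duality as a $\cC^1$-diffeomorphism $\bu \mapsto \bu^*$ between $\bd K$ and $\bd K^*$, working either in the hyperboloid or the projective model depending on which computation is cleaner. The classical Euclidean identity $\kappa_o(K,\bx)\,\kappa_o(K^\circ, n_K(\bx)) = 1$ has a non-Euclidean analog relating $H^\diamond_{d-1}(K, \bu)$ to $H^\diamond_{d-1}(K^*, \bu^*)$, weighted by a factor depending on the intrinsic support function of $K$ at $\bu$ (equivalently, the distance from the origin along the dual ray). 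Combining this identity with the Jacobian of the duality map in a change-of-variables converts the exponent $-1/(d+1)$ on $H^\diamond_{d-1}(K,\cdot)$ into $(d+2)/(d+1)$ on $H^\diamond_{d-1}(K^*,\cdot)$, with the extra $2/(d+1)$ arising from the combined curvature-Jacobian contribution.

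For part (i), isometry invariance is immediate from the intrinsic nature of $H^\diamond_{d-1}$ and $\Vol^\diamond_{\bd K}$. For the valuation property on $\cC^2_+$ bodies with $K \cup L$ convex, the set identity $\bd K \cup \bd L = \bd(K\cup L) \cup \bd(K\cap L)$ holds with overlap in $\bd K \cap \bd L$, and since $H^\diamond_{d-1}$ depends only on the local second-order geometry of the boundary, the curvatures agree wherever the boundaries coincide locally; splitting and integrating $H^\diamond_{d-1}(\cdot)^{-1/(d+1)}$ then yields additivity. To pass to general convex bodies, I would take Theorem~\ref{thm:limit_polar_affine_surface_area} as definitional via a liminf:
\[
\Omega^\diamond_{-d/(d+2)}(K) := a_d^{2/(d+1)} \liminf_{\delta \to 0^+} \delta^{-2/(d+1)} \Vol^\diamond_d(\mathcal{F}^{\diamond,*}_\delta K \setminus K),
\]
which agrees with the integral formula on $\cC^2_+$ bodies by Theorem~\ref{thm:limit_polar_affine_surface_area}. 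For each fixed $\delta > 0$ the map $K \mapsto \mathcal{F}^{\diamond,*}_\delta K$ is continuous in the Hausdorff metric (the dual floating body inheriting Hausdorff continuity from its non-dual counterpart by duality), and volume is continuous on convex bodies, so each $\delta$-indexed functional is Hausdorff continuous. A liminf of continuous functionals is automatically lower semi-continuous, giving (ii); valuation for general bodies then follows from valuation on the $\cC^2_+$ class combined with Hausdorff density of $\cC^2_+$ bodies and the lower semi-continuity just established.

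The main obstacle is the curvature-duality identity in part (iii): establishing the precise relation between $H^\diamond_{d-1}(K,\bu)$ and $H^\diamond_{d-1}(K^*,\bu^*)$ together with the Jacobian of the duality map in a unified way across the spherical, hyperbolic, and de Sitter settings. The duality has a structurally different form in the hyperboloid model than in the projective model, and in de Sitter space the Lorentzian signature requires careful bookkeeping to ensure the boundary remains space-like and the Gauss--Kronecker curvature is extracted with the correct sign. A secondary technical point is justifying the extension of the valuation property from the $\cC^2_+$ class to all convex bodies; in the Euclidean case this is delicate (handled in work of Sch\"utt, Ludwig--Reitzner, and others), and one expects the non-Euclidean version to require analogous care, most plausibly by invoking a Reshetnyak-type regularity result for convex bodies in spaces of constant curvature.
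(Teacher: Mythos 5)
Your argument for part (iii) is in the right spirit and matches the paper's strategy once one notes that the paper passes through the projective (gnomonic) model: there the non-Euclidean duality $^*$ becomes the Euclidean polarity $^\circ$, the relation $H^\diamond_{d-1}(K,\bu)\, H^\diamond_{d-1}(K^*,\bu^*) = 1$ reduces (for $\lambda=\pm 1$) to the Euclidean $\kappa_o(\overline K,\bx)\,\kappa_o(\overline K^\circ,\bx^\circ)=1$, and the change of variables is handled by Lemma~\ref{lem:integral_formula} together with the curvature and surface-element conversion formulas \eqref{eqn:spherical_gauss_projective}--\eqref{eqn:hyperbolic_surf_projective}. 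Two small corrections: for $\diamond\in\{s,h\}$ the curvature duality identity holds with \emph{no} extra support-function weight, and the Jacobian of $\bu\mapsto\bu^*$ with respect to the intrinsic surface measures is exactly $H^\diamond_{d-1}(K,\bu)$, which is why $-(d+2)/(d+1)+1 = -1/(d+1)$ closes the bookkeeping. Your isometry-invariance argument is the same as the paper's.

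Your route to (ii), and through it to (i) for general bodies, has a genuine gap. You invoke the principle that ``a liminf of continuous functionals is automatically lower semi-continuous''; this is false. Writing $\liminf_{\delta\to 0^+} F_\delta = \sup_{\varepsilon>0}\inf_{0<\delta<\varepsilon}F_\delta$, the inner infimum of a family of continuous functions is upper semi-continuous, and a supremum of upper semi-continuous functions has in general no semi-continuity whatsoever (consider $f_n(x)=\sin(nx)$: $\liminf_n f_n$ equals $-1$ off a countable set and $0$ at $0$, which is not lower semi-continuous at $0$). There is no monotonicity in $\delta$ to rescue the argument. There is also a second, independent gap: the theorem concerns the \emph{integral} functional $\Omega^\diamond_{-d/(d+2)}(K) = \int_{\bd K}(H^\diamond_{d-1})^{-1/(d+1)}\,\Vol^\diamond_{\bd K}$, defined for all convex bodies via the generalized Gauss--Kronecker curvature, whereas Theorem~\ref{thm:limit_polar_affine_surface_area} only identifies this with the floating-body limit on the $\cC^2_+$ class. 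Redefining $\Omega^\diamond_{-d/(d+2)}$ as a liminf on general bodies produces a possibly different object, and your density-plus-lsc argument would at best establish properties of that object, not of the integral functional in the statement. The paper avoids both pitfalls by working with the integral representation directly and adapting Ludwig's proof of lower semi-continuity (which uses approximation by the osculating ellipsoids/paraboloids and a pointwise liminf on curvature densities, not continuity of a floating-body-type functional) and Sch\"utt's valuation argument; your sketch of the valuation property for $\cC^2_+$ bodies via boundary decomposition is fine, but the extension to all bodies cannot proceed via the liminf route as described.
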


\subsection{Real-analytic extensions and limits obtained by scaling the curvature}

We obtain an extension of Theorem \ref{thm:limit_polar_affine_surface_area} for $\lambda \in \R$ in all real-space forms $\Sp^d(\lambda)$, and de Sitter space forms $\Sp^d_1(\lambda)$ for $\lambda <0$, of constant curvature $\lambda$, namely Theorem \ref{thm:main_lambda}, which gives rise to the curvature measure
\begin{equation*}
    O_{-d/(d+2)}^\lambda(K) = \int_{\bd K} H_{d-1}^\lambda(K,\bu)^{-\frac{1}{d+1}} \, \Vol_{\bd K}^\lambda(\dint \bu),
\end{equation*}
where $H_{d-1}^\lambda(K,u)$ is the generalized intrinsic Gauss--Kronecker curvature of $\bd K$ at $u$ and $\Vol_{\bd K}^\lambda$ is the intrinsic $(d-1)$-dimensional Hausdorff measure restricted to $\bd K$.

In Section \ref{sec:real_analytic} we fix $\overline{K}\subset \R^d$ in the projective model of $\Sp^d(\lambda)$ for all $\lambda\in\R$ and rescale $\delta$ by $\lambda$. Then we may take the limit $\lambda\to 0$ and observe that the polar floating body $\mathcal{F}_{\delta_\lambda}^{\lambda,*} \overline{K}$ converges to the $V_1$-illumination body \cite{Werner:1994}, or $V_1$-separation body \cite{Schneider:2020}, of $\overline{K}$. Here the $V_1$-illumination body is defined by
\begin{equation*}
    \mathcal{I}_{\delta}^{V_1}(\overline{K}) = \{\bx \in \R^d : \Delta_1([\overline{K},\bx],\overline{K}) \leq \delta\},
\end{equation*}
where $[L,\bx] = \mathrm{conv}(L,\{\bx\})$ and $\Delta_1(L,L') = V_1(L)+V_1(L')-2V_1(L\cap L')$ is the intrinsic volume deviation between convex bodies $L$ and $L'$ in $\R^d$ with respect to the intrinsic volume $V_1$, see e.g.\ \cite{BH:2023, BHK:2021}.
Note that the intrinsic volume $V_1$ is up to a dimensional constant the same as the mean width.

\renewcommand*{\thefootnote}{\arabic{footnote}}

Thus, as an immediate corollary to our extension of Theorem \ref{thm:limit_polar_affine_surface_area} for all $\lambda\in\R$, that is Theorem \ref{thm:main_lambda}, we derive, for $\lambda\to 0$, the following\footnote{Independent of our investigations, equation \eqref{eqn:olaf} also appeared previously without proof in a preprint version on arXiv of \cite{Schneider:2020} and was communicated to R.\ Schneider by O.\ Mordhorst in a private correspondence.}
\begin{theorem}\label{thm:V1_illumination_body}
    Let $K\subset\R^d$ be a convex body of class $\cC^2_+$. Then
    \begin{equation}\label{eqn:olaf}
        \frac{1}{c_d} \lim_{\delta\to 0^+} \frac{\Vol_d(\mathcal{I}_\delta^{V_1} K) - \Vol_d(K)}{\delta^{\frac{2}{d+1}}} 
        = \int_{\bd K} H_{d-1}(K,\bx)^{-\frac{1}{d+1}}\, \Vol_{\bd K}(\dint \bx) =: O_{-d/(d+2)}(K),
    \end{equation}
    where $c_d = \frac{1}{2}(d+1)^{2/(d+1)}$. Here $H_{d-1}(K,\cdot)$ is the generalized Gauss--Kronecker curvature and $\Vol_{\bd K}$ is the $(d-1)$-dimensional Hausdorff measure on $\bd K$.
\end{theorem}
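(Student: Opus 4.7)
The plan is to derive Theorem \ref{thm:V1_illumination_body} as a corollary of Theorem \ref{thm:main_lambda} by passing to the curvature limit $\lambda\to 0$ inside a common projective model of all real space forms. Fix $\overline{K}\subset\R^d$ of class $\cC^2_+$ lying strictly inside the projective model of $\Sp^d(\lambda)$ for every sufficiently small $|\lambda|$. For each such $\lambda$, Theorem \ref{thm:main_lambda} states
$$a_d^{2/(d+1)}\,\lim_{\delta\to 0^+}\frac{\Vol^\lambda_d(\mathcal{F}^{\lambda,*}_\delta\overline{K}\setminus\overline{K})}{\delta^{2/(d+1)}} = O^\lambda_{-d/(d+2)}(\overline{K}),$$
and the task is to transfer this identity to $\lambda=0$ with the correct Euclidean constant.

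First I would show that the right-hand side converges to $O_{-d/(d+2)}(\overline{K})$ as $\lambda\to 0$. In the projective model both the generalized intrinsic Gauss--Kronecker curvature $H^\lambda_{d-1}(\overline{K},\cdot)$ and the intrinsic boundary measure $\Vol^\lambda_{\bd\overline{K}}$ depend smoothly on $\lambda$ and reduce at $\lambda=0$ to their Euclidean counterparts. Since $\overline{K}$ is of class $\cC^2_+$, $H^\lambda_{d-1}(\overline{K},\bu)$ is bounded away from $0$ uniformly in $(\lambda,\bu)$ on a neighborhood of $\{0\}\times\bd\overline{K}$, so the integrand $H^\lambda_{d-1}(\overline{K},\bu)^{-1/(d+1)}$ is continuous and uniformly bounded, giving the stated convergence.

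Second I would identify a rescaling $\delta_\lambda = c\,|\lambda|\,\delta$, with an explicit constant $c=c(d)$, under which $\mathcal{F}^{\lambda,*}_{\delta_\lambda}\overline{K}$ converges, in the Hausdorff metric on $\R^d$, to the $V_1$-illumination body $\mathcal{I}^{V_1}_\delta\overline{K}$ as $\lambda\to 0$. The mechanism is duality in the projective model: the $\Sp^d(\lambda)$-dual of $\overline{K}$ recedes to infinity at rate $1/\sqrt{|\lambda|}$, and the condition defining the polar floating body, that certain caps of this dual have $\lambda$-volume at most $\delta_\lambda$, expands to leading order in $\lambda$ to a condition on the support-function increment $\int_{\S^{d-1}}(\bx\cdot\bu - h_{\overline{K}}(\bu))_+\,\dS(\bu)$. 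This integral equals $\Delta_1([\overline{K},\bx],\overline{K})$ up to a dimensional constant, which fixes the scaling $\delta_\lambda \propto |\lambda|\delta$. Rescaling the floating-body identity accordingly, the extra factor $|\lambda|^{2/(d+1)}$ from $\delta_\lambda^{2/(d+1)}$ combines with $a_d^{2/(d+1)}$ and with the Jacobian of the duality to produce the constant $c_d = \frac{1}{2}(d+1)^{2/(d+1)}$.

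The main obstacle is that Theorem \ref{thm:main_lambda} supplies a $\delta\to 0$ limit at each fixed $\lambda$, whereas Theorem \ref{thm:V1_illumination_body} asks for a $\delta\to 0$ limit defined directly at $\lambda=0$, and the two limits must be exchanged. To close this gap I would extract from the proof of Theorem \ref{thm:main_lambda} a quantitative second-order expansion of $\Vol^\lambda_d(\mathcal{F}^{\lambda,*}_\delta\overline{K}\setminus\overline{K})$ with an error bound depending only on the $\cC^2_+$-data of $\overline{K}$ and varying continuously in $\lambda$ on a neighborhood of $0$. Such an expansion yields local uniformity of the convergence in $\lambda$, which together with the two convergences above permits taking $\lambda\to 0$ before $\delta\to 0$ and concludes \eqref{eqn:olaf} with the stated constant $c_d$.
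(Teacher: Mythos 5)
Your proposal follows the same route as the paper's second proof (the one in Section~\ref{sec:real_analytic}), but there is a concrete error in the rescaling that would wreck the constant.

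The correct rescaling is $\delta_\lambda = \Vol_{d-1}(B_2^{d-1})\,|\lambda|^{(d+1)/2}\,\delta$, not $\delta_\lambda\propto|\lambda|\,\delta$. The exponent $(d+1)/2$ is forced by the density: in the projective model, $\phi_{1/\lambda}(\by) = \bigl|1+\|\by\|_2^2/\lambda\bigr|^{-(d+1)/2} = |\lambda|^{(d+1)/2}\,\bigl|\lambda+\|\by\|_2^2\bigr|^{-(d+1)/2}$, so that
\begin{equation*}
\lim_{\lambda\to 0}\,|\lambda|^{-(d+1)/2}\,\Vol_d^{\phi_{1/\lambda}}\bigl(\overline{K}^\circ\cap H^+(\bu,t)\bigr)
 = \int_{\overline{K}^\circ\cap H^+(\bu,t)}\|\by\|_2^{-(d+1)}\,\dint\by
 = \Vol_{d-1}(B_2^{d-1})\,\Delta_1([\overline{K},\bx],\overline{K}),
\end{equation*}
with $\bx=\bu/t$. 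With the correct scaling, $\delta_\lambda^{2/(d+1)} = \Vol_{d-1}(B_2^{d-1})^{2/(d+1)}\,|\lambda|\,\delta^{2/(d+1)}$, which cancels the $1/|\lambda|$ on the right-hand side of Theorem~\ref{thm:main_lambda} exactly, and the constant $a_d^{2/(d+1)}/\Vol_{d-1}(B_2^{d-1})^{2/(d+1)} = 1/c_d$ drops out cleanly; there is no separate ``Jacobian of the duality'' contribution. With your scaling $\delta_\lambda\propto|\lambda|\delta$, the factor $\delta_\lambda^{2/(d+1)}\propto|\lambda|^{2/(d+1)}$ leaves a residual $|\lambda|^{2/(d+1)-1}\to\infty$, so the limit would diverge. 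This is not a matter of choosing $c(d)$ cleverly: no $\lambda$-independent constant can absorb it.

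Two further remarks. Your concern about interchanging the $\delta\to 0$ and $\lambda\to 0$ limits is legitimate; the paper's Section~\ref{sec:real_analytic} argument passes over this point rather tersely, only establishing Hausdorff convergence $\mathcal{F}_{\delta_\lambda}^{\lambda,*}\overline{K}\to\mathcal{I}_\delta^{V_1}\overline{K}$ at fixed $\delta$ and then invoking Theorem~\ref{thm:main_lambda}. Your proposed fix (a uniform-in-$\lambda$ quantitative expansion extracted from the proof of Theorem~\ref{thm:main_lambda}) is plausible but not carried out, and would require real work. The paper in fact offers a second, more direct proof of Theorem~\ref{thm:V1_illumination_body} in Section~\ref{sec:weighted_polar_volume}, which works entirely in $\R^d$ by taking $\psi(\by)=\Vol_{d-1}(B_2^{d-1})^{-1}\|\by\|_2^{-(d+1)}$ in Theorem~\ref{thm:main_weighted}, identifying $\mathcal{I}_\delta^{V_1}(K) = \bigl((K^\circ)_\delta^{\tilde\psi}\bigr)^\circ$, and then applying Lemma~\ref{lem:integral_formula}; this route avoids the interchange issue altogether and is probably the cleaner one to reproduce.
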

Another proof of Theorem \ref{thm:V1_illumination_body} is presented in Section \ref{sec:weighted_polar_volume} using the polar volume of the weighted floating body in $\R^d$.

The duality $^*$ is a natural mapping between convex bodies in $\Sp^d(\lambda)$ and $\Sp^d(1/\lambda)$ for $\lambda \geq 0$, respectively $\Sp^d_1(1/\lambda)$ for $\lambda <0$. However, under suitable conditions we may rescale $K^*\subset \Sp^d(1/\lambda)$ with respect to a fixed point $\be\in\Sp^d(\lambda)$ to obtain $K^{\be}\subset \Sp^d(\lambda)$. 
We investigate this $\be$-polarity from convex bodies in $\Sp^d(\lambda)$ to $\Sp^d(\lambda)$ relative to a fixed point $\be$ in Section \ref{sec:star_polarity}. 
In Theorem \ref{thm:main_analytic} we establish, that, for a fixed point $\be$ in the interior of a convex body $K\subset \Sp^d(\lambda)$, the volume derivative of the floating body conjugated by $\be$-polarity gives rise to a curvature measure $\Omega^{\lambda,\be}_{-d/(d+2)}(K)$. If we identify $\Sp^d(\lambda)$ with a Euclidean model such that $\be$ is the origin $\bo$ and $\overline{K}\subset\R^d$ is the Euclidean convex body associated with $K$, then 
\begin{equation*}
    \lim_{\lambda\to 0^+} \Omega^{\lambda,\be}_{-d/(d+2)}(\overline{K}) = \as_{-d/(d+2)}(\overline{K}).
\end{equation*}
Thus $\Omega^{\lambda,\be}_{-d/(d+2)}$ can be seen as a real analytic extension of $\as_{-d/(d+2)}$. We further note that in the spherical setting $\lambda= 1$, respectively in the hyperbolic setting $\lambda=-1$, the $\be$-polarity is the same as the spherical, respectively hyperbolic, duality mapping $^*$ and therefore $\Omega^{\lambda,\be}_{-d/(d+2)}$ agrees with $\Omega^{\diamond}_{-d/(d+2)}$ for $\diamond \in \{s,h\}$ and does not depend on $\be$.

\medskip
Notably we observe, that while the non-Euclidean floating body and the floating area $\Omega_1^{\diamond}$ give rise to a real-analytic extension of the centro-affine and rigid-motion invariant curvature measure $\as_1$, our investigations for the polar floating body and $\Omega_{-d/(d+2)}^\diamond$ give rise to two distinct space limits from the non-Euclidean setting back to the Euclidean space. In the first case we connect $\Omega_{-d/(d+2)}^{\diamond}$ using the duality $^*$ to the rigid-motion invariant curvature measure $O_{-d/(d+2)}$ and in the second case, by fixing a point $\be$, we connect $\Omega_{-d/(d+2)}^\diamond$ using $\be$-polarity to the centro-affine invariant curvature measure $\as_{-d/(d+2)}$. So remarkably, both $\Omega_1^{\diamond}$ as well as $\Omega_{-d/(d+2)}^{\diamond}$ connect to centro-affine and rigid-motion invariant curvature measures in Euclidean space, see Figure \ref{graph:rels}.

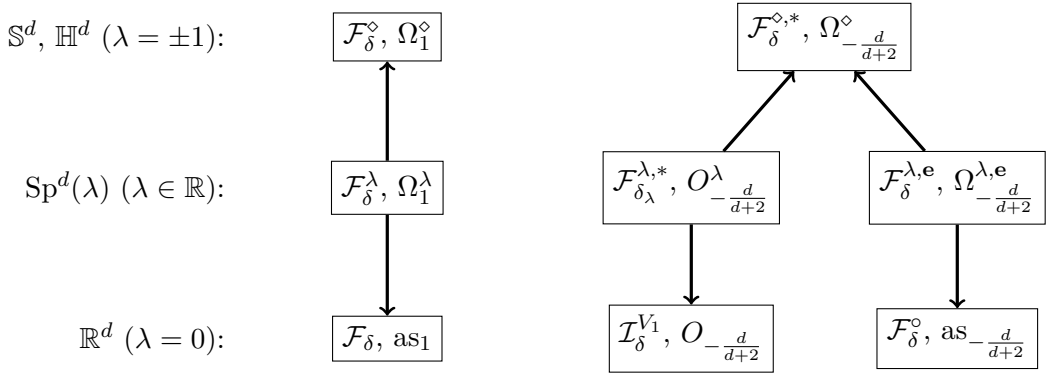
\begin{figure}[t]
    \centering
    \begin{tikzpicture}

            \node[left] at (-2,0) {$\S^d$, $\H^d$ ($\lambda=\pm 1$):};
            \node[left] at (-2,-2) {$\Sp^d(\lambda)$ ($\lambda \in \R$):};
            \node[left] at (-2,-4) {$\R^d$ ($\lambda=0$):};
        
            \node[shape=rectangle, draw=black] (A1) at (0,0) {$\mathcal{F}^{\diamond}_\delta$, $\Omega_1^\diamond$};

            \node[shape=rectangle, draw=black] (A2) at (5.75,0) {$\mathcal{F}^{\diamond,*}_\delta$, $\Omega_{-\frac{d}{d+2}}^\diamond$};
            
            \node[shape=rectangle, draw=black] (B2) at (4,-2) {$\mathcal{F}^{\lambda,*}_{\delta_\lambda}$, $O_{-\frac{d}{d+2}}^\lambda$};
            \node[shape=rectangle, draw=black] (B3) at (7.5,-2) {$\mathcal{F}^{\lambda,\be}_\delta$, $\Omega_{-\frac{d}{d+2}}^{\lambda,\be}$};
            
            \node[shape=rectangle, draw=black] (C2) at (4,-4) {$\mathcal{I}^{V_1}_\delta$, $O_{-\frac{d}{d+2}}$};
            \node[shape=rectangle, draw=black] (C3) at (7.5,-4) {$\mathcal{F}^{\circ}_\delta$, $\as_{-\frac{d}{d+2}}$};

            \draw[<-, very thick] (A2) edge (B2);
            \draw[<-, very thick] (A2) edge (B3);
            
            \draw[->, very thick] (B2) edge (C2);
            \draw[->, very thick] (B3) edge (C3);

            \node[shape=rectangle, draw=black] (B1) at (0,-2) {$\mathcal{F}^{\lambda}_\delta$, $\Omega_1^\lambda$};
            
            \node[shape=rectangle, draw=black] (C1) at (0,-4) {$\mathcal{F}_\delta$, $\as_1$};
            
            \draw[<-, very thick] (A1) edge (B1);
            
            \draw[->, very thick] (B1) edge (C1);
        \end{tikzpicture}
        \caption{Relation between the non-Euclidean (polar) floating body and its associated curvature measure and the Euclidean curvature measures that can be derived by scaling the curvature $\lambda$ to $0$. Remarkably, $\as_1$ is centro-affine and rigid-motion invariant and directly relates to its non-Euclidean analog $\Omega_1^{\diamond}$, while $\Omega_{-d/(d+2)}^\diamond$ can be related to two different Euclidean curvature measures, the rigid-motion invariant curvature measure $O_{-d/(d+2)}$ and the centro-affine invariant curvature measure $\as_{-d/(d+2)}$.}
        \label{graph:rels}
\end{figure}

\section{Background on convex geometry}\label{sec:background}

A convex body $K\subset \R^d$ is a compact convex subset with non-empty interior, such as, for example, the Euclidean ball $B_2^d(r)=\{\bx\in\R^d: \|\bx\|_2 \leq r\}$ of radius $r>0$ centered at the origin.
Here $\|\bx\|_2=\sqrt{\bx \cdot \bx}$ is the Euclidean norm induced by the Euclidean inner product $\cdot$ and the boundary sphere of $B_2^d:=B_2^d(1)$ is the Euclidean unit sphere $\S^{d-1}: = \{\bx\in \R^d : \|\bx\|_2 = 1\}$. For a general reference on convex geometry we refer to the books by Gardner \cite{Gardner:2006} and by Schneider \cite{Schneider:2014}.

A hyperplane $H(\bu,t):=\{\bx\in\R^d: \bx\cdot \bu = t\}$ is uniquely determined by the direction $\bu\in\S^{d-1}$ and the signed distance $t\in \R$ from the origin.
It is the boundary of the closed half-spaces  $H^+(\bu,t) := \{\bx\in\R^d : \bu\cdot \bx \geq t\}$ and $H^-(\bu,t) := H^+(-\bu,-t)$.

The support function $h_K:\R^d\to \R$ is defined by $h_K(\by) := \max \{ \by\cdot \bx: \bx\in K\}$ and,
if $K$ contains the origin in the interior,
then the radial function $\rho_K:\S^{d-1}\to(0,+\infty)$ is defined by
$\rho_K(\bu) := \max\{t>0 : t\bu \in K\}$ for all $\bu \in \S^{d-1}$. Note that $\bx\in \bd K$ has a uniquely determined outer unit normal $\bu = \bn_K(\bx)\in\S^{d-1}$ if and only if $h_K$ is differentiable in $\bu$ and $\bx=\nabla h_K(\bu)$ where $\nabla h_K$ is the gradient of $h_K$, see \cite[Cor.\ 1.7.3]{Schneider:2014}.

In the next lemma we state well known bounds for the volume (Lebesgue measure) $\Vol_d$ of ball caps, see, e.g., \cite[Hilfssatz 1]{LW1:1986}.
\begin{lemma}(volume of ball caps)\label{lem:ball_cap}
	Let $C^d(r, h):=\{(\by,z)\in\R^{d-1}\times\R : \|\by\|_2^2+z^2\leq r^2, r-h\leq z \leq r\}\subset B_2^d(r)$ be a cap of height $h$ of the Euclidean ball with radius $r$ in $\R^{d}$.
	Then
	\begin{equation*}
		\Vol_d(C^d(r,h)) =  a_d r^{\frac{d-1}{2}}  h^{\frac{d+1}{2}} (1+O(h)), \quad \text{for $h\to 0^+$},
	\end{equation*}
	where $a_d$ is the volume of the radially symmetric parabolic cap $P_1^d(1,1)=\{(\mathbf{y},z)\in\R^{d-1}\times\R : 1\geq z \geq \frac{1}{2}\|\mathbf{y}\|_2^2\}$ of height $1$ at the apex, that is,
	\begin{equation}\label{eqn:ad}
		a_d = \Vol_d(P_1^d(1,1))
		= (2\pi)^{\frac{d-1}{2}} \Big/ \Gamma\left(\frac{d+1}{2}+1\right).
	\end{equation}
    To be more precise, for $0< h< \frac{2r}{d}$, we have
    \begin{align*}
		\frac{a_d}{2} r^{\frac{d-1}{2}} h^{\frac{d+1}{2}} < \Vol_d(C^d(r,h)) < a_d r^{\frac{d-1}{2}} h^{\frac{d+1}{2}}.
	\end{align*}
\end{lemma}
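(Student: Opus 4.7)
The plan is to slice the cap $C^d(r,h)$ by hyperplanes of constant height $z$. Each such cross-section is a Euclidean $(d-1)$-ball of radius $\sqrt{r^2-z^2}$, so Fubini's theorem gives
\begin{equation*}
    \Vol_d(C^d(r,h)) = \omega_{d-1}\int_{r-h}^{r} (r^2-z^2)^{(d-1)/2}\,\mathrm{d}z,
\end{equation*}
where $\omega_{d-1}=\pi^{(d-1)/2}/\Gamma((d+1)/2)$ is the volume of the Euclidean unit $(d-1)$-ball. After the substitution $z = r-t$ one obtains
\begin{equation*}
    \Vol_d(C^d(r,h)) = \omega_{d-1}\int_0^{h} (2rt-t^2)^{(d-1)/2}\,\mathrm{d}t
    = \omega_{d-1}(2r)^{(d-1)/2}\int_0^{h} t^{(d-1)/2}\bigl(1-\tfrac{t}{2r}\bigr)^{(d-1)/2}\mathrm{d}t.
\end{equation*}

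For the asymptotic statement, I would use the uniform bound $(1-t/(2r))^{(d-1)/2} = 1+O(t/r)$ on $t\in[0,h]$, integrate term by term against $t^{(d-1)/2}$, and evaluate the elementary integral $\int_0^h t^{(d-1)/2}\mathrm{d}t = \frac{2}{d+1}h^{(d+1)/2}$. This yields
\begin{equation*}
    \Vol_d(C^d(r,h)) = \omega_{d-1}(2r)^{(d-1)/2}\cdot\tfrac{2}{d+1} h^{(d+1)/2}\bigl(1+O(h)\bigr),
\end{equation*}
and the identity $\omega_{d-1}\,2^{(d-1)/2}\cdot\tfrac{2}{d+1} = (2\pi)^{(d-1)/2}/\Gamma(\tfrac{d+1}{2}+1) = a_d$ delivers the claimed leading coefficient. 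To independently verify that this constant equals $\Vol_d(P_1^d(1,1))$, I would slice the paraboloid cap $P_1^d(1,1)$ by the same horizontal hyperplanes: at height $z\in[0,1]$ the cross-section is a $(d-1)$-ball of radius $\sqrt{2z}$, so $\Vol_d(P_1^d(1,1))=\omega_{d-1}\int_0^1 (2z)^{(d-1)/2}\mathrm{d}z$, which is the same number.

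For the explicit two-sided bounds, I observe that the integrand $t\mapsto (1-t/(2r))^{(d-1)/2}$ is monotonically decreasing in $t$. Thus $(1-t/(2r))^{(d-1)/2} \leq 1$ gives at once the strict upper bound $\Vol_d(C^d(r,h)) < a_d\, r^{(d-1)/2} h^{(d+1)/2}$ (strictness because the factor is $<1$ on $(0,h]$). For the lower bound I use that the integrand is bounded below by its minimum value $(1-h/(2r))^{(d-1)/2}$, so
\begin{equation*}
    \Vol_d(C^d(r,h)) \geq a_d\, r^{(d-1)/2} h^{(d+1)/2}\bigl(1-\tfrac{h}{2r}\bigr)^{(d-1)/2}.
\end{equation*}
The hypothesis $h < 2r/d$ gives $1-h/(2r) > (d-1)/d$, and the elementary inequality $((d-1)/d)^{(d-1)/2} > 1/2$ (which may be checked by showing the logarithm $\tfrac{d-1}{2}\log(1-1/d)$ is bounded below by $-\log 2$, using $\log(1-1/d)\geq -1/(d-1)$ for $d\geq 2$) finishes the proof.

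The main obstacle is essentially bookkeeping: matching the constant $\omega_{d-1}2^{(d-1)/2}\cdot\tfrac{2}{d+1}$ against $a_d = (2\pi)^{(d-1)/2}/\Gamma(\tfrac{d+1}{2}+1)$, and verifying that $((d-1)/d)^{(d-1)/2} > 1/2$ uniformly in $d\geq 2$ (which is clear since the left side decreases to $e^{-1/2}\approx 0.607$). No deep estimates are required beyond the monotonicity of $(1-t/(2r))^{(d-1)/2}$ on $[0,h]$.
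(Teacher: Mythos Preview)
Your proof is correct. The paper does not actually supply a proof of this lemma: it states the bounds as well known and refers the reader to \cite[Hilfssatz~1]{LW1:1986}. Your argument---slicing by horizontal hyperplanes, substituting $z=r-t$, and bounding the factor $(1-t/(2r))^{(d-1)/2}$ above by $1$ and below by its value at $t=h$---is exactly the standard computation one would expect behind such a citation, and all the constant-matching and the verification that $((d-1)/d)^{(d-1)/2}\geq e^{-1/2}>1/2$ are carried out correctly.
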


\subsection{Convex bodies that admit a rolling ball}

We say that $K$ is of class $\cC^k$ if $\bd K$ is a $\cC^k$-smooth submanifold of $\R^d$. We call $K$ of class $\cC^k_+$, for $k\geq 2$, if the \emph{generalized Gauss--Kronecker curvature} $H_{d-1}(K,\bx)$ is strictly positive for all $\bx\in \bd K$. Note that if $K$ is of class $\cC^k$, then $K$ is of class $\cC^k_+$ if and only if the Gauss map $\bn_K:\bd K \to \S^{d-1}$ is a $\cC^{k-1}$-diffeomorphism.
We are interested in $\cC^{1,1}$-smooth convex bodies, that is, $K$ is of class $\cC^1$ and the Gauss map $\bn_K:\bd K \to \S^{d-1}$ is Lipschitz. We further denote by $\cC^{1,1}_+$ the class of convex bodies such that $\bn_K$ is bi-Lipschitz between $\bd K$ and $\S^{d-1}$.

Finally, we denote by $\cC^k_{\mathrm{sc}}\subset \cC^k$ the class of all strictly convex bodies in $\cC^k$. A convex body is of class $\cC^1_{\mathrm{sc}}$ if and only if the Gauss map $\bn_K$ is a homeomorphism. Note that $\cC^k_+\subset \cC^k_{\mathrm{sc}}$ and $\cC^{1,1}_+\subset \cC^{1,1}_{\mathrm{sc}}$ and both inclusions are strict. We also note if $K$ is of class $\cC^{1,1}_+$ and contains the origin in the interior, then $K^\circ$ is also of class $\cC^{1,1}_+$, that is, $\cC^{1,1}_+$ is closed with respect to polarity. This is not true for $\cC^{1,1}_{\mathrm{sc}}$ in general. For example, for $p\geq 1$, let $\|(x_1,\dotsc,x_d)\|_p=\left(\sum_{k=1}^d |x_k|^p\right)^{1/p}$ and $B_p^d=\{\bx\in\R^d: \|\bx\|_p \leq 1\}$. Then, for $p>2$, $B_p^d$ is of class $\cC^2_{\mathrm{sc}}\setminus \cC^{1,1}_+ \subset \cC^{1,1}_{\mathrm{sc}}$, but $(B_p^d)^\circ = B_q^d$, for $1/p+1/q=1$, is of class $\cC^1_{\mathrm{sc}}\setminus \cC^{1,1}_\mathrm{sc}$.

\medskip
The \emph{rolling function} $r_K:\bd K \to [0,\infty)$ was introduced by McMullen in \cite{MM:1974}, see also \cite{SW:1990}, and is defined by 
\begin{equation*}
	r_K(\bx) := \sup \{ r\geq 0 : \exists \by\in K \text{ such that } \bx\in B_2^d(r)+\by \subset K\},
\end{equation*}
i.e., $r_K(\bx)$ is the maximal radius of a Euclidean ball inside $K$ that contains $\bx$. Note that the rolling function may not be continuous in general, but is always upper semicontinuous, that is, the super level sets $[r_K\geq t] := \{\bx\in \bd K: r_K(\bx)\geq t\}$ are closed for all $t\geq 0$, see \cite[Lem.~2.1]{Hug1:1996}.

We say that $K$ \emph{admits a rolling ball} if $\roll(K) := \inf\{r_K(\bx) >0: \bx\in \bd K\} > 0$.
A convex body $K$ admits a rolling ball of radius $\roll(K) > 0$ if and only if $K$ is of class $\cC^{1,1}$ \cite[Hilfssatz~1]{LW2:1986}, see also \cite[Lem.~2.1]{Hug1:1996}. Furthermore, if $\roll(K)>0$, then $K$ is the outer parallel body of a convex body $L$, i.e., $K = L+\roll(K) B_2^d$, and conversely for any convex body $K$ the outer parallel body $L=K+rB_2^d$ admits a rolling ball with $\roll(L)\geq r$.

Note that there are $\cC^1$-smooth convex bodies that do not admit a rolling ball, see Example \ref{ex:Lp-ball}.
By Blaschke's rolling theorem any $\cC^2$-smooth convex body admits a rolling ball. However, not all bodies that admit a rolling ball are of class $\cC^2$, see \cite[p.\ 438]{LW1:1986}. For example, the outer parallel bodies of polytopes admit a rolling ball, but are not of class $\cC^2$. See Figure \ref{fig:rel_spaces} for a graph of the inclusion relation between the different classes of convex bodies.

\begin{figure}
\centering
\begin{tikzpicture}

    \node[rectangle] (C2)   at (-0.5,2) {$\cC^k$};
    \node[rectangle] (C2p)  at (-0.5,-2){$\cC^k_+$};
    \node[rectangle] (C2sc) at (-0.5,0) {$\cC^k_{\mathrm{sc}}$};

    \node[rectangle] (pC2dots)   at (-1.75,2) {$\dots$};
    \node[rectangle] (pC2pdots)  at (-1.75,-2){$\dots$};
    \node[rectangle] (pC2scdots) at (-1.75,0) {$\dots$};

    \node[rectangle] (C2dots)   at (0.75,2) {$\dots$};
    \node[rectangle] (C2pdots)  at (0.75,-2){$\dots$};
    \node[rectangle] (C2scdots) at (0.75,0) {$\dots$};

    \node[rectangle] (R)    at (2,2) {$\cC^{1,1}$};
    \node[rectangle] (Rp)   at (2,-2) {$\cC^{1,1}_{+}$};
    \node[rectangle] (Rsc)   at (2,0) {$\cC^{1,1}_{\mathrm{sc}}$};
    
    \node[rectangle] (C1) at (4,2) {$\cC^1$};
    \node[rectangle] (C1sc) at (4,0) {$\cC^1_{\mathrm{sc}}$};
    
    \path[->] (pC2dots) edge (C2);
    \path[->] (C2) edge (C2dots);
    \path[->] (C2dots) edge (R);
    \path[->] (R) edge (C1);
    
    \path[->] (C2p) edge (C2sc);
	
	\path[->] (pC2pdots) edge (C2p);
	\path[->] (C2p) edge (C2pdots);
	\path[->] (C2pdots) edge (Rp);
	
	\path[->] (Rp) edge (C1sc);
	\path[->] (Rp) edge (Rsc);
	
	\path[->] (C2sc) edge (C2);
	
	\path[->] (pC2scdots) edge (C2sc);
	\path[->] (C2sc) edge (C2scdots);
	\path[->] (C2scdots) edge (Rsc);
	
	\path[->] (Rsc) edge (C1sc);
	\path[->] (Rsc) edge (R);
	\path[->] (C1sc) edge (C1);
	
\end{tikzpicture}
\caption{Partial order induced by the inclusion between the different classes of convex bodies for $k\geq 2$. Note that among all the classes considered only $\cC^k_+$, $\cC^{1,1}_+$ and $\cC^1_{\mathrm{sc}}$ are closed with respect to polarity on convex bodies that contain the origin in the interior. Furthermore, a convex body is of class $\cC^{1,1}$ if and only if it admits a rolling ball from the inside.}
\label{fig:rel_spaces}
\end{figure}
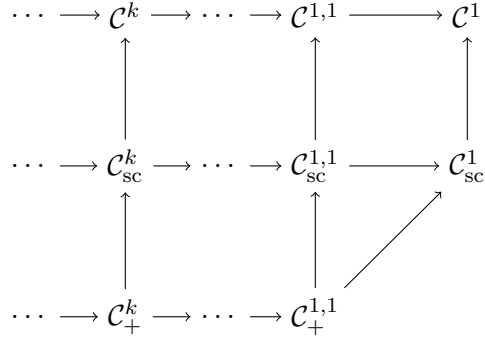

\medskip
We recall that for a convex body $K\subset \R^d$ a boundary point $\bx\in \bd K$ is called \emph{normal} if $\bx$ has a unique outer unit normal and the local representation function $f_{(\bx,K)}$ defined on an open neighborhood $U$ around $o$ of the tangent space $T_{\bx} K\cong \R^{d-1}$, $f_{(\bx,K)}: U\subset T_{\bx} K \to \R$ of $\bd K$ near $\bx=(o,f_{(\bx,K)}(o))$ is second order differentiable at the origin $o\in\R^{d-1}$, see, e.g., \cite[Sec.~2.6]{Schneider:2014}.
While for any full dimensional convex body $K$ almost all boundary points are normal, the corresponding set of unit normals to the normal boundary points might be discrete and may not give complete information about the shape of $K$. For example, for a convex polytope $P$ the \emph{support set} $F(P,\bu)=\{\bx\in P : \bu\cdot \bx = h_P(\bu)      \}$ is a singular boundary point for almost all directions $\bu\in\S^{d-1}$ and only for the facets of $P$ the relative interior of $F(P,\bu)$ are normal boundary points. However, if $K$ admits a rolling ball, then almost all normal directions $\bu\in \S^{d-1}$ determine a unique normal boundary point $\bx\in \bd K$. This motivates the next approximation result that follows by results of Leichtweiss \cite{LW1:1986} and Hug \cite{Hug1:1996}.

\pagebreak

\begin{theorem}[local approximation of convex bodies of class $\cC^{1,1}$]\label{thm:approx_parabola}
    Let $K\subset \R^d$ be a convex body of class $\cC^{1,1}$. Then for almost all $\bu\in\mathbb{S}^{d-1}$ we have:
    \begin{enumerate}
        \item[i)] The support function $h_K$ is second order differentiable at $\bu$ and $\bx:=\nabla h_K(\bu)\in\bd K$ is a normal boundary point with $H_{d-1}(K,\bx) >0$.
        
        \item[ii)] In addition, since $\bx$ is normal and $H_{d-1}(K,\bx)>0$, we find that $\bd K$ can be approximated near $\bx$ in following sense: For all small enough $\varepsilon\in (0,1)$, there exists $\delta=\delta(\varepsilon)>0$ such that the local representation of $\bd K$ at $\bx$ is given by a convex function $f_{(\bx,K)}: B_2^{d-1}(\delta) \subset T_\bx K \to [0,+\infty)$, where we identify the tangent hyperplane $T_\bx K\cong \R^{d-1}$ and assume that $\bx$ is at the origin, i.e., $(\by,f_{(\bx,K)}(\by))\in \bd K$ for all $\by\in B_2^{d-1}(\delta)$. Moreover, we have that
        \begin{equation}\label{eqn:taylor_approx}
            \frac{1}{(1+\varepsilon)} Q^2_{(\bx,K)}(\by) \leq f_{(\bx,K)}(\by) \leq \frac{1}{(1-\varepsilon)} Q^2_{(\bx,K)}(\by) \quad \text{for all $\by\in B_2^{d-1}(\delta)$},
        \end{equation}
        where the positive definite quadratic form $Q^2_{(\bx,K)}$ is defined by the Hessian of $f_{(\bx,K)}$ at $o\in\R^{d-1}$ as
        \begin{equation*}
            Q^2_{(\bx,K)}(\by) := \frac{1}{2} d^2 f_{(\bx,K)}(o)(\by,\by) = \frac{1}{2} \sum_{i=1}^{d-1} \frac{\partial^2 f}{\partial x_i \partial x_j}(o) y_i y_j,
        \end{equation*}
        where $\by=(y_1,\dotsc,y_{d-1})$.
     
        \item[iii)] Moreover, we may approximate $\bd K$ at $\bx$ by paraboloids, that is, there is $h=h(\varepsilon)>0$ such that
        \begin{equation*}
            P^d_{(\bx,K)}(1-\varepsilon,h) \subseteq K\cap H^+(\bu,h_K(\bu)-h) \subseteq P^d_{(\bx,K)}(1+\varepsilon,h),
        \end{equation*}
        where
        \begin{equation*}
            P^d_{(\bx,K)}(r,h) = \left\{(\by,z)\in T_\bx K \times \R : h\geq z \geq \frac{1}{r} Q^2_{(\bx,K)}(\by)\right\}
        \end{equation*}
        is an elliptic paraboloid that touches $\bd K$ at $\bx$ and 
        \begin{equation}\label{eqn:paraboloid_gauss}
            H_{d-1}(P^d_{(\bx,K)}(r,h),\bx) = r^{1-d} H_{d-1}(K,\bx).
        \end{equation}
    \end{enumerate}
    Finally, for an integrable function $g:\S^{d-1}\to \R$, we have that
	\begin{equation}\label{eqn:int_formula}
        \int_{\S^{d-1}} g(\bu)\, \mathcal{H}^{d-1}(\dint{\bu}) = \int_{\bd K} g(\bn_K(\bx))\, H_{d-1}(K,\bx)\, \mathcal{H}^{d-1}(\dint{\bx}),
	\end{equation}
	where $\mathcal{H}^{d-1}$ is the $(d-1)$-dimensional Hausdorff measure in $\R^d$ and $\bn_K$ denotes the Gauss map, i.e., $\bn_K(\bx)\in\S^{d-1}$ is the outer unit normal of $\bd K$ at $\bx$.
\end{theorem}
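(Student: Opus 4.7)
The plan is to combine the rolling-ball characterisation of class $\cC^{1,1}$ with Alexandrov's theorem on second-order differentiability of convex functions, applied to the support function $h_K$. Since $K$ is of class $\cC^{1,1}$, the preceding discussion (after \cite{LW2:1986}) gives $K = L + rB_2^d$ for some convex body $L$ with $r = \roll(K) > 0$, so on $\R^d$ the support function decomposes as $h_K(\bv) = h_L(\bv) + r\|\bv\|_2$. The function $h_K$ is convex and positively $1$-homogeneous, so by Alexandrov's theorem it admits a second-order Taylor expansion at $\mathcal{H}^{d-1}$-a.e.\ $\bu \in \S^{d-1}$, with a positive semi-definite Hessian $d^2 h_K(\bu)$ that annihilates $\bu$.

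At any such $\bu$, the rolling-ball decomposition yields
\[
    (d^2 h_K(\bu))\big|_{\bu^\perp} = (d^2 h_L(\bu))\big|_{\bu^\perp} + r\,\mathrm{Id}_{\bu^\perp} \geq r\,\mathrm{Id}_{\bu^\perp},
\]
so the restricted Hessian is positive definite with eigenvalues at least $r$ and determinant at least $r^{d-1}$. Second-order differentiability implies first-order differentiability, so $\bx := \nabla h_K(\bu) \in \bd K$ is uniquely determined and has $\bu$ as its unique outer unit normal, i.e.\ $\bx$ is a normal boundary point. The standard duality between the reverse Gauss map $\nabla h_K|_{\S^{d-1}}$ and the local graph representation of $\bd K$ near $\bx$, made rigorous in \cite{Hug1:1996}, identifies $d^2 f_{(\bx,K)}(o)$ with the inverse of $(d^2 h_K(\bu))|_{\bu^\perp}$. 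Hence $f_{(\bx, K)}$ is second-order differentiable at $o$ with positive definite Hessian, and
\[
    H_{d-1}(K, \bx) = \det d^2 f_{(\bx, K)}(o) = \frac{1}{\det (d^2 h_K(\bu))|_{\bu^\perp}} \in (0, r^{1-d}],
\]
proving (i).

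For (ii), the second-order expansion of $f_{(\bx, K)}$ at $o$ reads $f_{(\bx, K)}(\by) = Q^2_{(\bx, K)}(\by) + o(\|\by\|_2^2)$ with $Q^2_{(\bx, K)}$ positive definite. Positive definiteness gives $Q^2_{(\bx, K)}(\by) \geq c \|\by\|_2^2$ for some $c > 0$, so the remainder is absorbed by $\varepsilon\, Q^2_{(\bx, K)}(\by)$ on a sufficiently small ball $B_2^{d-1}(\delta)$, yielding the sandwich \eqref{eqn:taylor_approx}. For (iii), the paraboloid $P^d_{(\bx, K)}(r',h)$ is, by definition, the region below height $h$ above the graph of $(1/r')Q^2_{(\bx, K)}$. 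The sandwich of (ii) translated to the height level sets, combined with the $\cC^1$-smoothness at $\bx$ (so that $T_\bx K$ is genuinely the tangent hyperplane at $\bx$), yields the paraboloid inclusion after shrinking $h$ if necessary. The curvature identity \eqref{eqn:paraboloid_gauss} is a direct Hessian computation: the Hessian of $(1/r)Q^2_{(\bx, K)}$ at $o$ is $d^2 f_{(\bx, K)}(o)/r$, whose determinant equals $r^{1-d} H_{d-1}(K, \bx)$.

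Finally, \eqref{eqn:int_formula} follows from the area formula for the Lipschitz Gauss map $\bn_K \colon \bd K \to \S^{d-1}$: the Jacobian equals $H_{d-1}(K, \bx)$ at a.e.\ normal boundary point, and by (i) the fibre $\bn_K^{-1}(\bu)$ is a singleton for $\mathcal{H}^{d-1}$-a.e.\ $\bu$, so the multiplicity factor disappears. The main technical obstacle in this plan is the dual transfer of second-order differentiability from $h_K$ at $\bu$ to $f_{(\bx, K)}$ at $o$ together with the identification of their Hessians by inversion; the rolling-ball lower bound $\geq r\,\mathrm{Id}_{\bu^\perp}$ is indispensable here, since it rules out the degeneracy in which the Hessian of $h_K$ becomes singular and only then guarantees that inversion yields a genuine second-order Taylor expansion of $f_{(\bx, K)}$. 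This is essentially the content of \cite[Lem.\ 2.3]{Hug1:1996} together with its predecessors in \cite{LW1:1986}.
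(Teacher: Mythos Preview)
Your proposal is correct and follows essentially the same route as the paper: the paper defers statement i) to \cite[Lem.\ 2.6]{Hug1:1996}, whereas you unpack its proof (Alexandrov on $h_K$ plus the rolling-ball decomposition $h_K=h_L+r\|\cdot\|_2$ to force the restricted Hessian to be $\geq r\,\mathrm{Id}_{\bu^\perp}$). Parts ii)--iii) and the area-formula argument for \eqref{eqn:int_formula} match the paper almost verbatim; note only that the paper cites \cite[Lem.\ 2.3]{Hug1:1996} specifically for the identification $J_{d-1}^{\bd K}(\bn_K)=H_{d-1}(K,\cdot)$ in the area formula, while the dual transfer of second-order differentiability you invoke is packaged in \cite[Lem.\ 2.6]{Hug1:1996}.
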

\begin{proof}
    Since $K$ admits a rolling ball, statement i) follows from \cite[Lem.\ 2.6]{Hug1:1996}. 
    
    For ii) we just recall that a boundary point is called normal if and only if the local representation function $f_{(\bx,K)}$ is second order differentiable at $o\in\R^{d-1}$, that is
    \begin{equation*}
        \left|f_{(\bx,K)}(\by)-\frac{1}{2} d^2f_{(\bx,K)}(o)(\by,\by)\right| = o(\|\by\|^2_2),
    \end{equation*}
    which yields \eqref{eqn:taylor_approx}. Compare also \cite[Lem.\ 6]{Reitzner:2002} and \cite[Sec.\ 1.6]{SW:2003}.
    
    Next, iii) is just a reformulation of \eqref{eqn:taylor_approx} and \eqref{eqn:paraboloid_gauss} follows by
    \begin{equation*}
        H_{d-1}(P^d_{(\bx,K)}(r,h),\bx) = \det\left(\frac{1}{r}\frac{\partial^2f_{(\bx,K)}}{\partial x_i\partial x_j}(o)\right) = r^{1-d} H_{d-1}(K,\bx).
    \end{equation*}
    
    Finally, \eqref{eqn:int_formula} follows by Federer's area formula since the Gauss map $\bn_K:\bd K\to \S^{d-1}$ is Lipschitzian, see \cite[Lem.\ 2.1]{Hug1:1996} respectively \cite[Hilfsatz 1]{LW1:1986}, and for almost all $\bx\in \bd K$ the approximate Jacobian is
    \begin{equation*}
        J_{d-1}^{\bd K} (\bn_K)(\bx) = H_{d-1}(K,\bx),
    \end{equation*}
    see \cite[Lem.\ 2.3]{Hug1:1996}.
\end{proof}

\begin{remark}
For a general convex body a version of \eqref{eqn:int_formula} holds if the measure on the right hand side is replaced by the Gaussian curvature measure $C_0(K,\cdot) = \mathcal{H}(\sigma_K(\cdot))$, i.e., for a Borel set $A\subset \R^d$ we have
\begin{align*}
    \int_{\S^{d-1}} \mathbf{1}_{\sigma_K(A)}(\bu)\, \mathcal{H}^{d-1}(\dint \bu) 
    &= \int_{\bd K} \mathbf{1}_A(\bx)\, C_0(K,\dint \bx) \\
    &= \int_{\bd K} \mathbf{1}_A(\bx) H_{d-1}(K,\bx) \, \mathcal{H}^{d-1}(\dint \bx) + C_0^s(K,A),
\end{align*}
where $\sigma_K(A) = \{\bu\in \S^{d-1} : \text{$\bu$ is an outer normal vector to some $\bx\in A$}\}$ and $C_0^s(K,\cdot)$ is the singular part of $C_0(K,\cdot)$ with respect to $\mathcal{H}^{d-1}$.

Hence, if $K$ is $\cC^1$ and $C_0(K,\cdot)$ is absolutely continuous then \eqref{eqn:int_formula} follows.
However, \eqref{eqn:int_formula} may fail even if $K\in\cC^1_{\mathrm{sc}}$. For example, for $R\in(0,1)$ a classic construction of Busemann and Feller \cite[Sec.\ 6]{BF:1936} yields a convex body $K$ of class $\cC^1_{\mathrm{sc}}$ that is contained in the unit ball with constant Gauss--Kronecker curvature $H_{d-1}(K,\cdot)=R$ for almost all boundary points. Thus, for $g\equiv 1$, the right hand side of \eqref{eqn:int_formula} yields
\begin{equation*}
    \int_{\bd K} H_{d-1}(K,\bx) \,\mathcal{H}^{d-1}(\bx) = R \mathcal{H}^{d-1}(\bd K) \leq R \mathcal{H}^{d-1}(\S^{d-1}) < \mathcal{H}^{d-1}(\S^{d-1}).
\end{equation*}
Clearly, in this case the Gaussian curvature measure $C_0(K,\cdot)$ has a singular part. Note that Hug showed in \cite[Thm.\ 2.3]{Hug:1999} and \cite[Lem.\ 2.7]{Hug1:1996} that $C_0(K,\cdot)$ is absolutely continuous if and only if $r_K(\nabla h_K(\bu))>0$ for almost all $\bu\in\S^{d-1}$. This also implies that \eqref{eqn:int_formula} holds if $K$ is of class $\cC^{1,1}$.
\end{remark}

\subsection{The affine surface area and its relatives} \label{sec:p_affine_surface}

For a compact convex $K\subset \R^d$ \emph{Blaschke's (equi-)affine curvature measure} is defined by
\begin{equation*}
    \Omega(K,\eta) = \int_{\eta\,\cap\, \bd K} H_{d-1}(K,\bx)^{\frac{1}{d+1}} \, \mathcal{H}^{d-1}(\dint\bx),
\end{equation*}
for any Borel subset $\eta\subset \R^d$, where $H_{d-1}(K,\cdot)$ denotes the (generalized) Gauss--Kronecker curvature and $\mathcal{H}^{d-1}$ is the $(d-1)$-dimensional Hausdorff measure in $\R^d$. $\Omega$ is equi-affine invariant, that is, for equi-affine invariant maps $\alpha(\bx) := A\bx + \bz$, $A\in \mathrm{SL}(d)$ and $\bz\in\R^d$, we have $\Omega(\alpha(K),\alpha(\eta)) = \Omega(K,\eta)$.
We note that if the boundary of $K$ is a smooth hypersurface, then $\Omega$ is the equi-affine curvature measure obtained by the volume form that is derived by equipping $\bd K$ with the Blaschke--Berwald metric in affine differential geometry, see \cite[Sec.\ 2.1]{LSZH:2015}. \emph{Blaschke's affine surface area} is the total measure $\as_1(K):=\Omega(K,\R^d)$.

\medskip
Lutwak \cite{Lutwak:1996} introduced the family of \emph{$L_p$-affine surface areas} as a natural centro-affine extension of Blaschke's affine surface area for $p\geq 1$. This notion has attracted considerable interest and has been extended for all $p\in\mathbb{R}$, $p\neq -d$, by Hug \cite{Hug1:1996} ($0< p <1$), Meyer \& Werner \cite{MW:2000} and Schütt \& Werner \cite{SW:2003, SW:2004} ($p<0$). See also \cite{HSW:2019, Ludwig:2010, WY:2008}.
For a convex body $K\subset\R^d$ that contains the origin in the interior we define the $L_p$-affine surface area, by
\begin{equation}\label{def:p_surf_area}
    \as_p(K) := \int_{\bd K} \kappa_o(K,\bx)^{\frac{p}{d+p}}  
        \,  C_K(\dint \bx) \quad \text{for $p>-d$},
\end{equation}
where $\kappa_o(K,\bx)$ is the \emph{centro-affine curvature} of $\bd K$ at $\bx$ and $C_K$ is the centro-affine invariant \emph{cone volume measure}, see Section \ref{sec:back_polar} for more details.
Thus $\as_p(K)$ is a centro-affine invariant, that is, for any $A\in\mathrm{SL}(d)$ we have $\as_p(AK)=\as_p(K)$. 

Contained within the familiy of $L_p$-affine affine surface area is the important case $p=d$, i.e., the classical \emph{centro-affine surface area} 
\begin{equation*}
    \as_d(K)=\as_d(K^\circ) = \int_{\bd K} \sqrt{\kappa_o(K,\bx)} \, C_K(\dint\bx).
\end{equation*}
The centro-affine surface area is $\mathrm{GL}(d)$ invariant. It appears in many contexts, see, for example, \cite{BBV:2010, Faifman:2020}.

For $p>0$, respectively $p<0$, $\as_p$ is upper semi-continuous, respectively lower semi-continuous, on convex bodies that contain the origin in the interior with respect to the Hausdorff metric, as shown in \cite{Ludwig:2001, Ludwig:2010, Lutwak:1991}. As shown in \cite{Ludwig:2001, Schutt:1993} $\as_p$ is a valuation on convex bodies that contain the origin in the interior, that is, if $K\cup L$ is convex, then $\as_p(K)+\as_p(L) = \as_p(K\cup L) +\as_p(K\cap L)$. Finally, for the $L_p$-affine isoperimetric inequalities associated with $\as_p$ we refer to \cite{HS:2009, Ivaki:2014, Ivaki:2022,IM:2023, Lutwak:1996, LYZ:2000, LYZ:2004, WY:2008}.

Ludwig \cite{Ludwig:2010} and Ludwig \& Reitzner \cite{LudRei:2010} introduced the family of Orlicz affine surface areas, which are semi-continuous centro-affine invariant valuations, as an extension of Lutwak's $L_p$-affine surface area. Orlicz affine surface areas were latter characterized as the natural centro-affine invariant semi-continuous valuations on convex bodies in the celebrated centro-affine Hadwiger theorem, see \cite{HabPar:2014,LudRei:2010}. For Orlicz-affine isoperimetric inequalites see \cite{HLYZ:2010, Ye:2015}.

\subsection{Polarity and centro-affine invariants} \label{sec:back_polar}

Let $K\subset\R^d$ be a convex body that contains the origin in the interior and admits a rolling ball, i.e., $K$ is of class $\cC^{1,1}$.
Since $K$ contains the origin in the interior, its polar body is
\begin{equation*}
    K^\circ = \left\{\by\in \R^d : h_K(\by) \leq 1\right\},
\end{equation*}
and we note that $\rho_{K^\circ}(\bu) = 1/h_K(\bu)$ for all $\bu\in\S^{d-1}$.
Following ideas of Hug \cite{Hug2:1996}, we define the \emph{polar point $\bx^\circ$} of $\bx\in\bd K$ by
\begin{equation}\label{def:polar_points}
    \bx^\circ := \frac{\bn_K(\bx)}{\bx\cdot \bn_K(\bx)}.
\end{equation}
Note that since $K$ admits a rolling ball the outer unit normal $\bn_K(\bx)$ is uniquely determined for all $\bx\in\bd K$ and $\bn_K(\bx):\bd K\to \S^{d-1}$ is continuous. Since $K$ contains the origin in the interior we have that $h_K(\bn_K(\bx)) = \bx\cdot \bn_K(\bx) >0$. Finally, we notice that since $K$ admits a rolling ball from the inside, its boundary is a hypersurface that has an embedding in $\R^d$ that is at least $\cC^1$ smooth. Thus $\bx^\circ:\bd K \to \bd K^\circ$ is continuous and surjective and $K^\circ$ is strictly convex, but in general $K^\circ$ is not $\cC^1$, for example see Figure \ref{fig:not_c2}.

The measure $C_K$ that is absolutely continuous with respect to $\mathcal{H}^{d-1}$ on $\bd K$ with density
\begin{equation*}
    \frac{\dint C_K}{\dint \mathcal{H}^{d-1}}(\bx) = \bx\cdot \bn_K(\bx),
\end{equation*}
is the \emph{cone-volume measure} of $K$. It is concentrated on $\bd K$ and
\begin{equation*}
    C_{AK}(A\omega) = \left|\det A\right| \, C_K(\omega)
\end{equation*}
for all Borel $\omega\subset \R^d$ and $A\in\mathrm{GL}(\R^d)$. In particular,
\begin{equation*}
    C_K(\R^d) = d\Vol_d(K).
\end{equation*}
Finally, the centro-affine curvature defined by
\begin{equation*}
    \kappa_o(\bx) := \frac{H_{d-1}(K,\bx)}{(\bx\cdot \bn_K(\bx))^{d+1}},
\end{equation*}
is related to the volume of the centered ellipsoid $\mathcal{E}_o(K,\bx)$ that oscillates $\bd K$ at $\bx$, i.e.,
\begin{equation*}
    \Vol_d(\mathcal{E}_o(K,\bx)) = \frac{\Vol_d(B_2^d)}{\sqrt{\kappa_o(K,\bx)}}.
\end{equation*}
Indeed, we may choose an orthogonal basis $\{\be_1,\dotsc,\be_d\}$ in $\R^d$ such that $\be_d =\bn_K(\bx)$ and $\be_1,\dotsc,\be_{d-1}$ are the principal directions of $\bd K$ at $\bx$, then
\begin{equation*}
    \mathcal{E}_o(K,\bx) = A(K,\bx) B_2^d,
\end{equation*}
where $A(K,\bx)\in\mathrm{GL}(\R^d)$ is given by
\begin{equation*}
    A(K,\bx) = \begin{pmatrix}
            \sqrt{\frac{x_d}{\kappa_1(K,\bx)}} & & & x_1\\
            & \ddots & & \vdots\\
            & & \sqrt{\frac{x_d}{\kappa_{d-1}(K,\bx)}} & x_{d-1}\\
            0& \hdots & 0 & x_d
        \end{pmatrix},
\end{equation*}
for $\bx = (x_1,\dotsc,x_d)$, and $\kappa_i(K,\bx)$ are the generalized principal curvatures of $\bd K$ at $\bx$, see for example \cite[Sec.\ 2.6]{Schneider:2014}.
Since by our choice of coordinates $x_d = \bx\cdot \bn_K(\bx)$, we derive that
\begin{equation*}
    \Vol_d(\mathcal{E}_o(K,\bx)) 
    = \left|\det A(K,\bx)\right| \Vol_d(B_2^d) 
    = \sqrt{\frac{x_d^{d+1}}{H_{d-1}(K,\bx)}} \, \Vol_d(B_2^d) 
    = \frac{\Vol_d(B_2^d)}{\sqrt{\kappa_o(K,\bx)}},
\end{equation*}
or equivalently
\begin{equation*}
    \kappa_o(K,\bx) 
    = \left(\frac{\Vol_d(B_2^d)}{\Vol_d(\mathcal{E}_o(K,\bx))}\right)^2 
    = \frac{1}{(\det A(K,\bx))^2} = \frac{H_{d-1}(K,\bx)}{(\bx\cdot \bn_K(\bx))^{d+1}}.
\end{equation*}
See also \cite[Sec.\ 2.2]{HW:2023} for coordinate free way to compute $\kappa_o$.

We recall that for a centro-affine transformation $AK$ of $K$, for $A\in\mathrm{GL}(\R^d)$, we have
\begin{align*}
    \bn_{AK}(A\bx) &= \frac{A^{-\top} \bn_K(\bx)}{\|A^{-\top} \bn_K(\bx)\|}, \quad
    J_{d-1}^{\bd K}(A)(\bx) = \left|\det A\right|\|A^{-\top} \bn_K(\bx)\|,\quad
    (A\bx)^\circ = A^{-\top} \bx^\circ,
\end{align*}
and $\bn_{(AK)^\circ}((A\bx)^\circ) = A \bx / \|A\bx\|$.
Furthermore, 
\begin{equation*}
    H_{d-1}(AK, A\bx) = \frac{\left|\det A\right|^{d-1}}{J_{d-1}^{\bd K}(A)(\bx)^{d+1}} H_{d-1}(K,\bx), \quad
    \kappa_o(AK,A\bx) = \frac{\kappa_o(K,\bx)}{(\det A)^2}.
\end{equation*}

%2d Example plot:
%https://sagecell.sagemath.org/?z=eJyVUbtywyAQ7PkKOoGEJcBFKhX-gHwBgzOyDJbGsnAEccjf55AjP4pkJgVwt3ezd7tYXGOxiluJrIBw31tLLIsUWXlLRcpR5DUvX5DlgNvSf-w8iXXk0ClmSDxhcsbkI4bOgwuvBvAUwBQVGWdCM9y6wU11thua9phBo0mrXEwb3ESUYDBA08q_T4GIApKtpMjIh5aVEQp44OZa3wcVMMn1YyAqcmaB47HQTM3JhKlv3-ZtFqqfziLmRqQFV7xcMziw5dCPxoevwdTZvvGd2Wf_JJRXxb-zoUNyLfJ8VlLYORAadQssF1gm-JjgBvwFt2l182ZL1hU4tIHiqYGFIlGgxUhNy368mMkbQvOlomZbO14dOcUMH7hmirMu-YjRH_I2-SKwdfDBlPl-hEdThmeRMj_395-dQN6zZBfCVfJ1ROk790noN20xwsg=&lang=sage&interacts=eJyLjgUAARUAuQ==

\begin{figure}
    \centering
    \begin{tikzpicture}[scale=2]
        %\fill[opacity=0.3] (-1,-1) -- (1,-1) arc(-90:90:1) -- (-1,1) arc(90:270:1) -- cycle;
        
        \fill[black, opacity=0.3, domain={-1}:{1},smooth,variable=\x] 
            plot ({(1-(\x)^2)/2}, {\x})
            plot ({-(1-(\x)^2)/2}, {-\x})--cycle;
        
        \draw[->,gray] (-2.3,0) -- (2.3,0) node[below] {$x$};
        \draw[->,gray] (0,-1.3) -- (0,1.3) node[left] {$y$};

         \draw[thick, domain=-1:1,smooth,variable=\x] 
            plot ({(1-(\x)^2)/2}, {\x});
        
        \draw[dotted, domain=-1.3:-1,smooth,variable=\x] 
            plot ({(1-(\x)^2)/2}, {\x});
        \draw[dotted, domain=1:1.3,smooth,variable=\x] 
            plot ({(1-(\x)^2)/2}, {\x});

        \draw[thick, domain=-1:1,smooth,variable=\x] 
            plot ({-(1-(\x)^2)/2}, {\x});
        
        \draw[dotted] (-1,-1) -- (-1,1);
        \draw[dotted] (1,1) -- (1,-1);
        \draw[thick] (-1,-1) -- (1,-1);
        \draw[thick] (-1,1) -- (1,1);
        \fill (0,1) circle(0.025);
        \fill (0,-1) circle(0.025);
        \draw[thick] (1,-1) arc(-90:90:1);
        \draw[dotted] (1,1) arc(90:270:1);
        \draw[thick] (-1,1) arc(90:270:1);
        
        \node[below,gray] at (1,0) {$1$};
        \node[left,gray] at (0,1) {$1$};
        \node[below left,gray] at (0,0) {$0$};
        
        \node[above] at (1.7,0) {$K$};
        \node[above] at (-0.6,0.3) {$K^\circ$};
        
        \node at (2.7,-0.5) {$(x-1)^2+y^2=1$};
        \node at (0.85,-0.65) {$2x+y^2=1$};
        
        \fill ({1+1/sqrt(2)},{1/sqrt(2)}) circle(0.025) node[right] {$\bx$};
        \draw[dashed,gray] ({1+1/sqrt(2)},{1/sqrt(2)}) --++({1/(1+sqrt(2))},{1/(1+sqrt(2))});
        \draw[thick, ->] ({1+1/sqrt(2)},{1/sqrt(2)}) -- ({1+1.3/sqrt(2)},{1.3/sqrt(2)}) node[right] {$\bn_K(\bx)$};
        \draw[dashed] (0,0) -- ({1+1/sqrt(2)},{1/sqrt(2)});
        
        \draw[dashed] (0,0) -- ({1/(1+sqrt(2))},{1/(1+sqrt(2))});
        \fill ({1/(1+sqrt(2))},{1/(1+sqrt(2))}) circle(0.025) 
            node[below right, xshift=0.1cm, yshift=0.2cm] {$\bx^\circ$};
            
        \draw[dashed,gray] ({1/(1+sqrt(2))},{1/(1+sqrt(2))}) --++ ({(1+1/sqrt(2))},{1/sqrt(2)});
        \draw[thick, ->] ({1/(1+sqrt(2))},{1/(1+sqrt(2))}) --++ ({0.3*(1+sqrt(2))/sqrt(4+2*sqrt(2))},{.3/sqrt(4+2*sqrt(2))}) 
            node[above right] {$\bn_{K^\circ}(\bx^\circ)$};
        
    \end{tikzpicture}
    \caption{Example for a convex body $K$ that admits a rolling ball, but is not $\cC^2$. The polar $K^\circ$ of $K$ is strictly convex, but not $\cC^1$.}
    \label{fig:not_c2}
\end{figure}
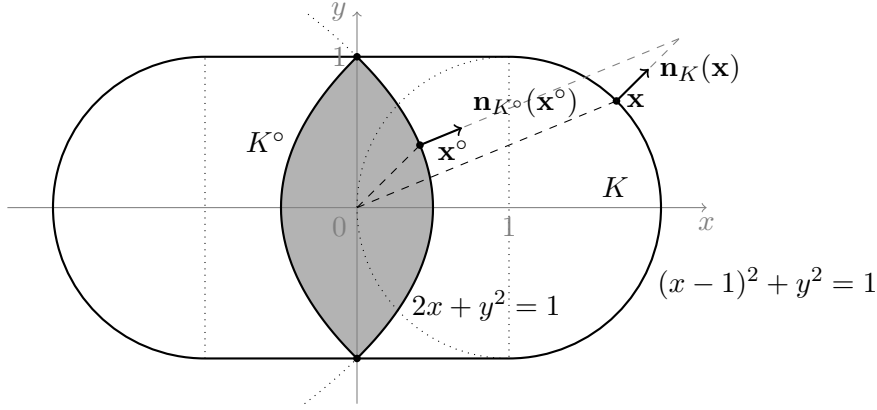

The next lemma follows from a result of Hug \cite{Hug2:1996}.
\begin{lemma}[integral transformation of the cone-volume measure by polarity]\label{lem:integral_formula}
    Let $K\subset \R^d$ be a convex body that admits a rolling ball and that contains the origin in the interior. Then for an integrable function $f:\bd K^\circ \to \R$, we have that
    \begin{align*}
        \int_{\bd K^\circ} f(\by)\, C_{K^\circ}(\dint{\by})
        &= \int_{\bd K} f(\bx^\circ) \kappa_o(K,\bx)\, C_K(\dint \bx).
    \end{align*}
\end{lemma}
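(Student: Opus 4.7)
The plan is to parametrize $\bd K^\circ$ over $\S^{d-1}$ via the radial map $r(\bu) = \bu/h_K(\bu)$ and then convert the resulting spherical integral back into one over $\bd K$ by means of \eqref{eqn:int_formula}. The rolling ball hypothesis, which is equivalent to $K \in \cC^{1,1}$, ensures that $h_K$ is $\cC^{1,1}$ on $\R^d \setminus \{\bo\}$, so that $r$ is Lipschitz on $\S^{d-1}$ and Federer's area formula applies.

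First I would identify the geometric objects involved. By Theorem \ref{thm:approx_parabola}(i), for almost every $\bu \in \S^{d-1}$ the point $\bx := \nabla h_K(\bu) \in \bd K$ is a normal boundary point with $\bn_K(\bx) = \bu$ and $\bx \cdot \bu = h_K(\bu) > 0$; in particular $r(\bu) = \bx^\circ$. Moreover, the hyperplane $\{\by : \by \cdot \bx = 1\}$ supports $K^\circ$ at $\bx^\circ$, and a standard polarity argument (exploiting the strict convexity of $K^\circ$) shows that $\bn_{K^\circ}(\bx^\circ) = \bx/\|\bx\|_2$ at almost every such point, so that $\bx^\circ \cdot \bn_{K^\circ}(\bx^\circ) = 1/\|\bx\|_2$.

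The main computation is the tangential Jacobian $J_r(\bu)$ at points where $h_K$ is twice differentiable. Picking an orthonormal basis $\{\bv_1,\ldots,\bv_{d-1}\}$ of $T_\bu \S^{d-1}$, one computes $dr(\bu)(\bv_i) = h_K(\bu)^{-1}(\bv_i - c_i\bu)$ with $c_i = \langle \bx, \bv_i\rangle/h_K(\bu)$, and the Gram determinant simplifies via the matrix-determinant lemma and the identity $\|\bx\|_2^2 = h_K(\bu)^2 + \|\bx_T\|_2^2$ (with $\bx_T$ the tangential component of $\bx$) to $J_r(\bu) = \|\bx\|_2/h_K(\bu)^d$. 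The factor $\|\bx\|_2$ cancels precisely against the $1/\|\bx\|_2$ from the cone-volume density at $\bx^\circ$, so that the area formula applied to $r$ delivers
\begin{equation*}
\int_{\bd K^\circ} f(\by)\, C_{K^\circ}(\dint \by) = \int_{\S^{d-1}} f(\bu/h_K(\bu))\, h_K(\bu)^{-d}\, \mathcal{H}^{d-1}(\dint \bu).
\end{equation*}
Applying \eqref{eqn:int_formula} to $K$ with $g(\bu) = f(\bu/h_K(\bu))h_K(\bu)^{-d}$ rewrites the right-hand side as $\int_{\bd K} f(\bx^\circ)\, H_{d-1}(K,\bx)\,(\bx\cdot\bn_K(\bx))^{-d}\,\mathcal{H}^{d-1}(\dint \bx)$, which equals $\int_{\bd K} f(\bx^\circ)\,\kappa_o(K,\bx)\,C_K(\dint \bx)$ by the definitions of $\kappa_o$ and $C_K$.

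The main obstacle I anticipate is the measure-theoretic bookkeeping required to identify $\bn_{K^\circ}(\bx^\circ)$ and the Jacobian $J_r(\bu)$ at almost every $\bu$, since $K^\circ$ need not be $\cC^1$ and singular boundary points of $K^\circ$ could a priori interfere. This is precisely where the rolling ball assumption is essential, and Theorem \ref{thm:approx_parabola} together with the a.e.\ second-order differentiability of $h_K$ on $\S^{d-1}$ and the Lipschitz control on $\nabla h_K$---following the approach of \cite{Hug2:1996}---provides exactly the regularity needed to carry out the change of variables rigorously.
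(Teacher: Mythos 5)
Your proof is correct and follows essentially the same route as the paper: parametrize $\bd K^\circ$ over $\S^{d-1}$ by the radial map $r(\bu)=\bu/h_K(\bu)$, use the area formula to reduce to $\int_{\S^{d-1}} f(r(\bu))\,h_K(\bu)^{-d}\,\mathcal{H}^{d-1}(\dint\bu)$, and then pass to $\bd K$ via \eqref{eqn:int_formula}. The only deviation is that you re-derive the Jacobian $J_r(\bu)=\|\nabla h_K(\bu)\|_2/h_K(\bu)^{d}$ by a direct Gram-determinant computation, whereas the paper simply quotes this from Hug \cite[Lem.~3.1]{Hug2:1996}.
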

\begin{proof}
    First, for the radial map $R:\S^{d-1} \to \bd K^\circ$ defined by $R(\bu) = \rho_{K^\circ}(\bu)\bu$, we have that 
    \begin{equation*}
        J(R)(\bu)=\frac{1}{h_K(\bu)^{d-1} (\bu\cdot \bn_{K^\circ}(R(\bu)))},
    \end{equation*}
    for almost all $\bu\in\S^{d-1}$, see \cite[Lemma 3.1]{Hug2:1996}.
    Hence
    \begin{equation*}
        \int_{\bd K^\circ} f(\by) \, C_{K^\circ}(\dint{\by}) = \int_{\bd K^\circ} f(\by) (\by\cdot \bn_{K^\circ}(\by))\, \mathcal{H}^{d-1}(\dint \by)
        = \int_{\S^{d-1}} \frac{f(R(\bu))}{h_K(\bu)^{d}} \, \mathcal{H}^{d-1}(\dint \bu).
    \end{equation*}
    Now by Theorem \ref{thm:approx_parabola}, we find that
    \begin{align*}
        &\int_{\S^{d-1}} \frac{f(R(\bu))}{h_K(\bu)^{d}} \, \mathcal{H}^{d-1}(\dint \bu)
        %&\qquad= \int_{\bd K} \frac{f(R(\bn_K(\bx)))}{h(K,\bn_K(\bx))^{d}} H_{d-1}(K,\bx)\, \mathcal{H}^{d-1}(\dint \bx)\\
        = \int_{\bd K} f\left(\frac{\bn_K(\bx)}{\bx\cdot \bn_K(\bx)}\right) \kappa_o(K,\bx) \, C_K(\dint \bx).
    \end{align*}
    Thus the statement of the lemma follows.
\end{proof}

By Lemma \ref{lem:integral_formula} we derive for any continuous $\Phi:[0,+\infty)\to [0,+\infty)$ 
\begin{multline}\label{eqn:polarity_formula}
        \as_{\Phi}(K^\circ) 
        := \!\int_{\bd K^\circ} \!\!\!\! \Phi\left(\kappa_o(K^\circ,\by)\right)\, C_{K^\circ}(\dint \by)
         = \!\int_{\bd K}       \!\!\!\! \Phi\left(\kappa_o(K,\bx)^{-1}\right) \kappa_o(K,\bx) \, C_K(\dint\bx)
        =\as_{\Phi^*}(K),
\end{multline}
where $\Phi^*(s) = s\Phi(1/s)$ and we used the fact that for almost all $\bx\in \bd K$ we have that
\begin{equation}\label{eqn:kappa_0-inv}
    \kappa_o(K,\bx)\kappa_o(K^\circ,\bx^\circ) = 1,
\end{equation}
see \cite[Thm.\ 2.8]{Hug2:1996}. 

Equation \eqref{eqn:polarity_formula} yields a direct proof for the polarity relation in the family of Orlicz-affine surface areas. In particular, for $\Phi(t)=t^{\frac{p}{n+p}}$ we find that $\as_p(K^\circ) = \as_{d^2/p}(K)$. This was previously established for the $L_p$-affine surface area, $p>0$, by Hug \cite[Thm.~3.2]{Hug2:1996} and for Orlicz-affine surface areas by Ludwig \cite[Thm.\ 4]{Ludwig:2010}, who gave an elegant proof using a very deep characterization result on centro-affine invariant valuations. 

\section{Weighted floating bodies and polarity}

The floating body of a convex body is a classical affine construction that can be traced back to Dupin in the 19th century. A generalization of Dupin's floating body was introduced in \cite{SW:1990} and a weighted notion of floating body in \cite{Werner:2002}, see also \cite{BLW:2018}. More recent generalizations and connections of the concept of floating body can be found in \cite{BW:2016, Fresen:2012, Klartag:2018, NSW:2019}.

\begin{definition}[weighted floating body]\label{def:WFB}
    Let $K\subset \R^d$ be a convex body and let $\phi:K\to [0,+\infty)$ be a continuous weight function. Then, for $\delta>0$, the $\phi$-weighted floating body $K_\delta^\phi$ is defined by
    \begin{equation*}
        K_\delta^\phi = \bigcap \{ H^- : \Vol_d^\phi(H^+\cap K)\leq \delta\},
    \end{equation*}
    where $(H^-,H^+)$ is any pair of closed half-spaces that share a boundary hyperplane, and $\Vol_d^\phi$ is an absolutely continuous measure with respect to the Lebesgue measure with continuous density function $\phi$.
\end{definition}

Definition \ref{def:WFB} can be seen as a construction of the floating body from the outside by intersecting all closed half-spaces that contain it, that is, as a special\emph{Wulff shape}, see, for example, \cite[Ch.\ 7.5]{Schneider:2014}. Note that weighted volume derivatives of Wulff shapes were considered recently in \cite[Lem.\ 2.7]{KL:2023}.

We may see the weighted floating body as a Wulff shape: let $h_\delta:\S^{d-1} \to \R$ be defined implicitly by
\begin{equation*}
    \delta = \Vol_d^\phi\left(K\cap H^+\left(\bu, h(K,\bu)-h_\delta(\bu)\right)\right).
\end{equation*}
Then
\begin{equation*}
    K_\delta^\phi = [h(K,\cdot)-h_\delta(\cdot)] := \{\bx \in \R^d : \text{for all $\bu \in \S^{d-1}$ we have } \bx\cdot \bu \leq h(K,\bu)-h_\delta(\bu)\}.
\end{equation*}
In particular, if $K_\delta^\phi\neq \emptyset$, then we have
\begin{equation*}
    h(K,\bu) - h(K_\delta^\phi,\bu) \geq h_\delta(\bu) \qquad \text{for all $\bu\in\S^{d-1}$}.
\end{equation*}
Note that $h(K,\bu)-h(K_\delta^\phi,\bu) = h_\delta(\bu)$ if and only if the cap $K\cap H^+(\bu,h(K_\delta^\phi,\bu))$ with normal $\bu$ tangent to $K_\delta^\phi$ has $\phi$-volume exactly $\delta$. 
However, in general a hyperplane that cuts off $\phi$-volume $\delta$ from $K$ is not necessary tangent to the convex floating body $K_\delta^\phi$.
Meyer and Reisner \cite[Thm.~3]{MR:1991} showed that if $K$ is symmetric and $\phi\equiv 1$, then $h(K_\delta,\bu) = h(K,\bu)-h_\delta(\bu)$ for all $\bu\in \S^{d-1}$ and all $\delta\in (0,\frac{1}{2}\Vol_d(K))$ and Leichtweiss showed the following theorem:

\begin{theorem}[regularity of the floating body of convex bodies of class $\cC^{1,1}$]\label{thm:leichtweiss}
	Let $K\subset \R^d$ be a convex body that admits a rolling ball of radius $r>0$. 
	Set $\delta_0 := \frac{1}{2} r^d \Vol_d(B_2^d)$.
	Then for all $\delta \in (0,\delta_0)$ we have
	\begin{enumerate}
		\item[i)] the convex floating body $K_\delta$ is of class $\cC^2_+$, and
		\item[ii)] every hyperplane that cuts off a cap of volume $\delta$ from $K$ is tangent to $K_\delta$, that is, $h(K_\delta,\bu)=h(K,\bu)-h_\delta(\bu)$ for all $\bu\in\S^{d-1}$.
	\end{enumerate}
\end{theorem}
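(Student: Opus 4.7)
The plan is to use the rolling ball property to confine every relevant cap to a region where $\bd K$ is a $\cC^{1,1}$ graph, then apply the implicit function theorem to the cap volume functional, and finally identify the resulting function as the support function of a $\cC^2_+$ convex body, which must coincide with $K_\delta$.

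First, I would establish the key height bound: any closed half-space $H^+$ with $\Vol_d(K\cap H^+)\le\delta<\delta_0$ cuts off a cap of height strictly less than $r$. If the cap height were $\ge r$, then at the deepest point $\bx\in\bd K\cap H^+$ the inner ball of radius $r$ touching $\bd K$ at $\bx$ (which exists by the rolling ball property) would have its center on the $H^+$-side of $\bd H^+$, forcing $K\cap H^+$ to contain a closed hemisphere of volume $\tfrac{1}{2}r^d\Vol_d(B_2^d)=\delta_0$, contradicting the assumption. This ensures that every relevant cap lies in a neighborhood of the unique deepest point $\nabla h_K(\bu)$ where Theorem~\ref{thm:approx_parabola} provides a local $\cC^{1,1}$ graph representation of $\bd K$.

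Next, for every $\bu\in\S^{d-1}$ let $t(\bu):=h(K,\bu)-h_\delta(\bu)$ be the unique value with $F(\bu,t(\bu)):=\Vol_d(K\cap H^+(\bu,t(\bu)))=\delta$. Using the graph representation and Fubini's theorem, $F$ is $\cC^2$ near every $(\bu,t(\bu))$, and $\partial_t F(\bu,t(\bu))=-\Vol_{d-1}(K\cap H(\bu,t(\bu)))<0$ by nondegeneracy of the slice. The implicit function theorem yields $t\in\cC^2(\S^{d-1})$. Extending $t$ positively homogeneously to $\R^d\setminus\{0\}$, I would show that the reverse Weingarten form $d^2_{\S^{d-1}}t(\bu)+t(\bu)\,\mathrm{id}$ is strictly positive definite on every tangent space, by differentiating the relation $F(\bu,t(\bu))=\delta$ twice in tangent directions and substituting the parabolic approximation $f_{(\nabla h_K(\bu),K)}(\by)\sim Q^2_{(\nabla h_K(\bu),K)}(\by)$. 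This exhibits $t$ as the support function of a convex body $L$ of class $\cC^2_+$.

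Finally, by monotonicity of $F(\bu,\cdot)$ we have $\{s\in\R:F(\bu,s)\le\delta\}=[t(\bu),+\infty)$, so $K_\delta=\bigcap_{\bu\in\S^{d-1}}H^-(\bu,t(\bu))=L$. Therefore $K_\delta=L$ is of class $\cC^2_+$, which is (i), and $h(K_\delta,\bu)=t(\bu)=h(K,\bu)-h_\delta(\bu)$ for all $\bu$, which is (ii). The main obstacle is the middle step: deriving both the $\cC^2$-smoothness and the strict positive definiteness of the reverse Weingarten form of $L$ from only the $\cC^{1,1}$-regularity of $\bd K$. The gain of one derivative must come from the integration defining $F$, and this requires uniform control, as $\bu$ varies, of the parabolic approximation of $\bd K$ near the deepest point --- exactly the uniformity provided by the rolling ball condition.
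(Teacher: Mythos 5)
Your outline has the right skeleton, and the opening step is complete and correct: the bound that any half-space $H^+$ with $\Vol_d(K\cap H^+)\le\delta<\delta_0$ cuts a cap of height $<r$ is exactly the rolling-ball argument one needs. The trouble is in the two middle steps, which carry all the analytic content. The assertion that "Fubini's theorem" yields $F\in\cC^2$ does not hold up: writing $F$ locally as $\int_{\{f<g\}}(g-f)$, the first derivatives are domain integrals (the boundary term vanishes because the integrand vanishes on $\partial\{f<g\}$), but second derivatives produce genuine boundary integrals over the moving level set $\{f=g\}$, whose continuity in $(\bu,t)$ requires a transversality argument, uniform nondegeneracy of $\nabla f-\nabla g$ along it, and the cap-height bound to keep this level set inside the graph chart --- none of which is supplied by Fubini alone.

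More substantively, the proposed route to strict positive definiteness of $d^2_{\S^{d-1}}t(\bu)+t(\bu)\,\mathrm{id}$ --- differentiate the implicit relation twice and "substitute the parabolic approximation" --- cannot close. Theorem~\ref{thm:approx_parabola} provides the osculating paraboloid with $H_{d-1}(K,\nabla h_K(\bu))>0$ only for \emph{almost every} $\bu\in\S^{d-1}$. A $\cC^{1,1}$ body can contain flat facets (e.g.\ the outer parallel body of a segment, a "stadium"); at any outer normal $\bu$ of such a facet $\nabla h_K(\bu)$ is set-valued and the generalized Gauss curvature vanishes, so there is no paraboloid to substitute. Almost-everywhere positive definiteness together with $t\in\cC^2$ only gives, by continuity, positive \emph{semi}-definiteness everywhere, which is enough for $t$ to be a support function and thus for part (ii), but not for the strict definiteness that part (i) asserts. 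Obtaining strict positivity at the exceptional normals is precisely the content of $K_\delta\in\cC^2_+$ and requires a \emph{uniform} estimate on the second derivatives of $F$ coming from the rolling ball (this is what Leichtweiss's actual proof in \cite{LW1:1986, LW2:1986}, to which the paper defers, delivers); as you yourself flag, this is the main obstacle, and your plan leaves it open.
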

\begin{proof}
    See \cite[Satz 1]{LW1:1986} and \cite[Satz 1 \& 2]{LW2:1986}.
\end{proof}

Instead of a construction from the outside, there is also an equivalent construction of $K_\delta^\phi$ from the inside which is described next and follows ideas of \cite{BL:1988}.

\begin{lemma}[minimal cap density]\label{lem:MCD}
    Let $K\subset \R^d$ be a convex body and let $\phi:K\to [0,+\infty)$ be a continuous and non-negative weight function. The \emph{minimal cap density} of $K$ with respect to $\phi$ is defined by 
    \begin{equation*}
        \mcd_{K,\phi}(\bx) = \min_{\bu\in\S^{d-1}} \Vol_d^\phi(K\cap H^+(\bu,\bx\cdot \bu)),\quad \bx\in K.
    \end{equation*}
    Then the interior of the weighted floating body $K_\delta^\phi$ is the strict $\delta$-superlevel set of $\mcd_{K,\phi}$, i.e.,
    \begin{equation*}
        \interior K_\delta^\phi = [\mcd_{K,\phi} > \delta] := \{\bx\in K: \mcd_{K,\phi}(\bx) > \delta\}.
    \end{equation*}
    In particular, if $\phi$ is strictly positive in an open neighborhood of $\bd K$, then $K_\delta^\phi = [\mcd_{K,\phi}\geq \delta]$.
\end{lemma}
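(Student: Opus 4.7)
The plan is to establish the sandwich
\begin{equation*}
[\mcd_{K,\phi} > \delta] \;\subseteq\; K_\delta^\phi \;\subseteq\; [\mcd_{K,\phi}\geq \delta]
\end{equation*}
directly from the definition of $K_\delta^\phi$, then upgrade the left inclusion to $\interior K_\delta^\phi = [\mcd_{K,\phi} > \delta]$ using continuity of $\mcd_{K,\phi}$ together with a perturbation argument, and finally upgrade the right inclusion under the positivity assumption on $\phi$. The key reformulation is that $\bx\notin K_\delta^\phi$ is equivalent to the existence of $(\bu,t)\in\S^{d-1}\times \R$ with $\bx\cdot\bu > t$ and $\Vol_d^\phi(K\cap H^+(\bu,t))\leq \delta$. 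From this, monotonicity of $t\mapsto \Vol_d^\phi(K\cap H^+(\bu,t))$ immediately yields $\mcd_{K,\phi}(\bx)\leq \delta$ whenever $\bx\notin K_\delta^\phi$; conversely, if $\mcd_{K,\phi}(\bx)<\delta$, continuity in $t$ produces some $t<\bx\cdot\bu$ witnessing $\bx\notin K_\delta^\phi$. This gives the sandwich.

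For the interior equality, I would first observe that the map $(\bx,\bu)\mapsto v(\bx,\bu):=\Vol_d^\phi(K\cap H^+(\bu,\bx\cdot\bu))$ is jointly continuous on $K\times \S^{d-1}$, since the symmetric difference of two nearby caps lies in an arbitrarily thin slab of small Lebesgue measure and $\phi$ is bounded on $K$. Compactness of $\S^{d-1}$ then gives continuity of $\mcd_{K,\phi}(\bx) = \min_\bu v(\bx,\bu)$, so $[\mcd_{K,\phi}>\delta]$ is open and hence contained in $\interior K_\delta^\phi$. For the reverse inclusion, given $\bx\in \interior K_\delta^\phi$, pick a minimizer $\bu^*$ realizing $\mcd_{K,\phi}(\bx) = v(\bx,\bu^*)$ together with a small $\epsilon>0$ such that $\bx + 2\epsilon \bu^* \in K_\delta^\phi$. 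Since $(\bx + 2\epsilon\bu^*)\cdot\bu^* = \bx\cdot\bu^*+2\epsilon > \bx\cdot\bu^*+\epsilon$, the point $\bx+2\epsilon\bu^*$ lies strictly outside $H^-(\bu^*,\bx\cdot\bu^*+\epsilon)$, and membership in $K_\delta^\phi$ forces $\Vol_d^\phi(K\cap H^+(\bu^*,\bx\cdot\bu^*+\epsilon)) > \delta$. Monotonicity then gives $\mcd_{K,\phi}(\bx) = v(\bx,\bu^*) > \delta$.

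The last step is to show $[\mcd_{K,\phi}\geq \delta]\subseteq K_\delta^\phi$ under the assumption that $\phi$ is strictly positive on an open neighborhood $U$ of $\bd K$. Suppose $\bx$ lies in the former but not the latter, witnessed by some $(\bu,t)$. Combining the sandwich lower bound with the monotonicity of cap $\phi$-volume forces
\begin{equation*}
\Vol_d^\phi\bigl(K\cap H^+(\bu,t)\bigr) = \Vol_d^\phi\bigl(K\cap H^+(\bu,\bx\cdot \bu)\bigr) = \delta,
\end{equation*}
so the frustum $F := K\cap\{\by : t \leq \by\cdot\bu \leq \bx\cdot\bu\}$ has $\phi$-volume zero. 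The main obstacle is to refute this geometrically: the strip of $\bd K$ caught between the two parallel hyperplanes is a relatively open $(d-1)$-dimensional portion of $\bd K$; by compactness of $\bd K$ and openness of $U$, a sufficiently small tubular neighborhood of this portion lies inside $U\cap F$ and has positive Lebesgue measure, so strict positivity of $\phi$ on $U$ yields $\Vol_d^\phi(F)>0$, the desired contradiction.
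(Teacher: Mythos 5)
Your proof is correct and follows the same overall structure as the paper's: the sandwich $[\mcd_{K,\phi}>\delta]\subseteq K_\delta^\phi\subseteq[\mcd_{K,\phi}\geq\delta]$, the identification $\interior K_\delta^\phi = [\mcd_{K,\phi}>\delta]$, and the positivity argument that the slab between the two parallel hyperplanes carries positive $\phi$-volume. Where you genuinely deviate is in proving $[\mcd_{K,\phi}>\delta]\subseteq\interior K_\delta^\phi$: you establish continuity of $\mcd_{K,\phi}$ via joint continuity of the cap-volume on the compact $K\times\S^{d-1}$ and conclude that $[\mcd_{K,\phi}>\delta]$ is open, whereas the paper argues by contrapositive that $\bx\notin\interior K_\delta^\phi$ yields a half-space containing $\bx$ cutting off $\phi$-volume at most $\delta$ --- an assertion whose boundary case ($\bx\in\bd K_\delta^\phi$) is left implicit there. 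Your continuity route is cleaner, at the small cost of proving that continuity. Your perturbation $\bx\mapsto\bx+2\varepsilon\bu^*$ for the other interior inclusion is the direct form of the paper's contrapositive observation that $\mcd_{K,\phi}(\bx)=\delta$ forces $\bx\in\bd K_\delta^\phi$; both rely on the minimizing direction $\bu^*$ existing, which you justify by compactness. In the last step, the phrase ``tubular neighbourhood'' is slightly loose --- a full tube contains points outside $K$ and outside the slab, hence outside $F$ --- but all you need is a single ball around some $\by\in\bd K$ lying strictly between the two hyperplanes, whose intersection with $K$ has positive Lebesgue measure and lies in $U\cap F$. Such a $\by$ exists because, for $\delta>0$, any $\bx\in[\mcd_{K,\phi}\geq\delta]$ necessarily lies in $\interior K$ (an outer normal at a boundary point gives a zero-measure face, hence $\mcd_{K,\phi}=0$ there), so the open interval $(t,\bx\cdot\bu)$ meets $(-h_K(-\bu),h_K(\bu))$, the range of $\by\mapsto\by\cdot\bu$ on $\bd K$.
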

\begin{proof}
    Let $\bx\in K_\delta^\phi$. Then for all half-spaces $H^+$ that contain $\bx$ in the interior, we have that $\Vol_d^\phi(K\cap H^+) > \delta$. Hence $\mcd_{K,\phi}(\bx)\geq \delta$, or equivalently $K_\delta^\phi\subseteq [\mcd_{K,\phi}\geq \delta]$. Furthermore, if $\mcd_{K,\phi}(\bx)=\delta$, then there exist $\bu\in\S^{d-1}$ such that $\Vol_d^\phi(K\cap H^+(\bu,\bx\cdot \bu)) = \delta$. Thus $K_\delta^\phi \subset H^-(\bu,\bx\cdot\bu)$ and $x\in \bd H^-(\bu,\bx\cdot \bu)$ which yields $\bx\in \bd K_\delta^\phi$. Thus, $\interior K_\delta^\phi \subseteq [\mcd_{K,\phi}>\delta]$.
    
    Conversely, if $\bx\not\in \interior K_\delta^\phi$, then there is a half-space $H^+(\bu,t)$ that contains $\bx$ such that $\Vol_d^\phi(K\cap H^+(\bu,t))\leq\delta$. 
    Since $\bx$ is contained in $H^+(\bu,t)$, we have that $t\leq \bu\cdot \bx$, and therefore
    \begin{equation*}
        \Vol_d^\phi(K\cap H^+(\bu,t) \setminus H^+(\bu,\bu\cdot \bx))\geq 0.
    \end{equation*}
    This yields
    \begin{equation*}
        \mcd_{K,\phi}(\bx) \leq \Vol_d^\phi(K\cap H^+(\bu,\bu\cdot \bx)) \leq \Vol_d^\phi(K\cap H^+(\bu,t)) \leq \delta
    \end{equation*}
    Thus $\bx\not\in[\mcd_{K,\phi} > \delta]$ and therefore $\interior K_\delta^\phi \supseteq [\mcd_{K,\phi} > \delta]$.
    
    Finally, assume that $\bx \not\in K_\delta^\phi$ and that $\varphi$ is continuous and strictly positive in a neighborhood of $\bd K$.
    Then there exists a half-space $H^+(\bu,t)$ that contains $\bx$ in the interior and such that $\Vol_d^\phi(K\cap H^+(\bu,t))\leq \delta$. Since $\bx$ is contained in the interior of $H^+(\bu,t)$, we conclude that $t< \bu\cdot \bx$, and since $\varphi$ is continuous and strictly positive we find
    \begin{equation*}
        \Vol_d^\phi(K\cap H^+(\bu,t) \setminus H^+(\bu,\bu\cdot \bx)) > 0.
    \end{equation*}
    Thus
    \begin{equation*}
        \mcd_{K,\phi}(\bx) \leq \Vol_d^\phi(K\cap H^+(\bu,\bu\cdot \bx)) < \Vol_d^\phi(K\cap H^+(\bu,t)) \leq \delta,
    \end{equation*}
    and therefore $\bx \not\in [\mcd_{K,\phi}\geq \delta]$, which yields $K_\delta^\phi\supseteq [\mcd_{K,\phi}\geq \delta]$.
\end{proof}

\begin{lemma}[monotonicity of weighted floating bodies]\label{lem:mono_float}
    Let $K\subset \R^d$ be a convex body and let $\phi: K \to [0,+\infty)$ be a continuous weight function. If $\psi: K\to [0,+\infty)$ is another continuous weight function with $\psi\leq \phi$ on $K$, then
    \begin{equation*}
        K_\delta^\psi \subseteq K_\delta^\phi \quad \text{for all $\delta\geq 0$}.
    \end{equation*}
    This implies, in particular, that if $L\subseteq K$ is another convex body, then
    \begin{equation*}
        L_\delta^\phi \subseteq K_\delta^\phi \quad \text{for all $\delta\geq 0$}.
    \end{equation*}
\end{lemma}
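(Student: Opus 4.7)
The lemma is essentially a monotonicity observation and follows directly from the outside construction of the weighted floating body given in Definition 2.3; I would not invoke the minimum cap density characterization of Lemma 2.4, since that characterizes only the interior in general. The key point is that taking the intersection over a \emph{larger} family of half-spaces yields a \emph{smaller} set.

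For the first assertion, the plan is as follows. Fix $\delta \geq 0$, and denote by
\begin{equation*}
    \mathcal{A}_\delta^\phi := \{H^- : \Vol_d^\phi(K \cap H^+) \leq \delta\}
\end{equation*}
the family of half-spaces appearing in the definition of $K_\delta^\phi$, and analogously $\mathcal{A}_\delta^\psi$. Since $\psi \leq \phi$ pointwise on $K$ and both densities are continuous (hence locally integrable), we have $\Vol_d^\psi(A) \leq \Vol_d^\phi(A)$ for every Borel set $A \subseteq K$. In particular, whenever a half-space $H^+$ satisfies $\Vol_d^\phi(K \cap H^+) \leq \delta$, the same inequality holds with $\phi$ replaced by $\psi$. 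Thus $\mathcal{A}_\delta^\phi \subseteq \mathcal{A}_\delta^\psi$, and since
\begin{equation*}
    K_\delta^\phi = \bigcap \mathcal{A}_\delta^\phi, \qquad K_\delta^\psi = \bigcap \mathcal{A}_\delta^\psi,
\end{equation*}
an intersection over the larger family is contained in the intersection over the smaller one, giving $K_\delta^\psi \subseteq K_\delta^\phi$.

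The second assertion is obtained in the same spirit. For $L \subseteq K$ and any half-space $H^+$, one has $L \cap H^+ \subseteq K \cap H^+$, and since $\phi \geq 0$ the monotonicity of the measure $\Vol_d^\phi$ gives
\begin{equation*}
    \Vol_d^\phi(L \cap H^+) \leq \Vol_d^\phi(K \cap H^+).
\end{equation*}
Consequently, every $H^-$ admissible in the definition of $K_\delta^\phi$ is also admissible in the definition of $L_\delta^\phi$, so the family for $L_\delta^\phi$ contains the family for $K_\delta^\phi$, and the same intersection argument yields $L_\delta^\phi \subseteq K_\delta^\phi$.

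There is no real obstacle here — the only subtlety is to make sure to work directly with the defining family of half-spaces rather than a set-level characterization, because Lemma 2.4 describes the interior via a strict level set and the boundary case equality $K_\delta^\phi = [\mcd_{K,\phi} \geq \delta]$ requires the extra hypothesis that $\phi$ be strictly positive near $\bd K$, which is not assumed in the present lemma.
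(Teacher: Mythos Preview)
Your proof is correct. It differs from the paper's route: the paper invokes Lemma~3.2 (the minimal cap density characterization), deducing from $\psi\leq\phi$ that $\mcd_{K,\psi}\leq\mcd_{K,\phi}$ and hence
\[
K_\delta^\psi=\closure[\mcd_{K,\psi}>\delta]\subseteq\closure[\mcd_{K,\phi}>\delta]=K_\delta^\phi.
\]
Your argument bypasses this and works directly with the Wulff-shape definition, observing that the family of admissible half-spaces is antitone in the weight. Both proofs rest on the same one-line inequality $\Vol_d^\psi(K\cap H^+)\leq\Vol_d^\phi(K\cap H^+)$; yours applies it to the defining intersection, the paper's to the level-set description. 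Your approach is slightly more robust in that it does not need the step $K_\delta^\psi=\closure(\interior K_\delta^\psi)$, which the paper is implicitly using and which can fail in the degenerate case where $K_\delta^\psi$ is nonempty with empty interior. Your closing remark about not relying on the $[\mcd\geq\delta]$ equality is well taken, though note the paper sidesteps that particular issue by taking the closure of the strict super-level set instead.
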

\begin{proof}
    Since $\psi \leq \phi$ implies that $\Vol_d^\psi(K\cap H^+) \leq \Vol_d^\phi(K\cap H^+)$ for any closed half-space $H^+$, we find that $\mcd_{K,\psi} \leq \mcd_{K,\phi}$. Thus
    $K_\delta^\psi = \closure [\mcd_{K,\psi}> \delta] \subseteq \closure [\mcd_{K,\phi}> \delta] = K_\delta^\phi.$
\end{proof}

\begin{remark}[floating body of a measure]
    The proof of Lemma \ref{lem:MCD} shows that even for a general measure $\mu$ we always have
    \begin{equation*}
        \closure [\mcd_\mu >\delta] \subseteq 
        [\mu]_\delta \subseteq [\mcd_\mu \geq  \delta],
    \end{equation*}
    where 
    $[\mu]_\delta:= \bigcap \{ H^-: \mu(H^+)\leq \delta\}$ and 
    $\mcd_\mu(x) := \inf \{\mu(H^+(\bu,\bu\cdot \bx)): \bu\in\S^{d-1}\}$ for $\bx\in \R^d$.
    
    Furthermore, if the support of $\mu$ is bounded and there exists $\varepsilon>0$ such that for every extremal point $\bx$ in the convex hull of the support of $\mu$ and for all $t\in(0,\varepsilon)$ the function $t\mapsto \mu(H^+(\bu,\bx\cdot\bu-t))$ is strictly increasing, then there exists $\delta_0>0$ such that $[\mu]_\delta = [\mcd_\mu\geq \delta]$ for all $\delta \in (0,\delta_0)$.
    
    In particular, for the construction of the weighted surface body we may consider a continuous function 
    $\varphi: \bd K \to (0,+\infty)$. 
    Then the measure $\dint \nu = \varphi\, \dint\mathcal{H}^{d-1}_{\bd K}$ gives rise to the \emph{surface body}, introduced by Schütt \& Werner in \cite{SW:2003, SW:2004}, by $[\bd K]_\delta^\phi:=[\nu]_\delta=[\mcd_\nu\geq \delta]$ for all $\delta >0$.
\end{remark}

The following lemma is an extension of \cite[p.\ 311]{MW:2000}, where $K$ is assumed to be of of class $\cC^2_+$ and $\phi$ is a uniform weight function. Compare also \cite[Lem.\ 6]{SW:1990}.
\begin{lemma}(uniform convergence rate)\label{lem:upper_bound_support}
	Let $K$ be a convex body of class $\cC^{1,1}$, that is, $K$ admits a rolling ball of radius $r:=\mathrm{roll}(K)>0$. Further, let $\phi:K\to (0,\infty)$ be a integrable function such that $\alpha:=\inf_{K} \phi >0$.
	Then there exists $\delta_0 = \delta_0(d,r,\alpha) >0$ and $C=C(d,r,\alpha)>0$ such that for all $\delta \in (0,\delta_0)$
	\begin{equation*}
        |h(K,\bu) - h(K_\delta^\phi,\bu)|\leq C \delta^{\frac{2}{d+1}} \qquad \text{for almost all $\bu\in\S^{d-1}$}.
	\end{equation*}
\end{lemma}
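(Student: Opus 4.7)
The plan is to produce, for almost every $\bu\in\S^{d-1}$, an explicit point $\bz \in K_\delta^\phi$ whose coordinate $\bz\cdot\bu$ is close to $h(K,\bu)$; since the opposite inequality $h(K_\delta^\phi, \bu) \leq h(K, \bu)$ is immediate from $K_\delta^\phi \subseteq K$, this gives the required one-sided bound.

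Let $\bu\in\S^{d-1}$ be a point of differentiability of $h_K$, so that $\bx := \nabla h_K(\bu)$ is the unique point of $\bd K$ with outer unit normal $\bu$ and $\bx\cdot\bu = h(K,\bu)$. Since $K$ admits a rolling ball of radius $r=\roll(K)$, there is a Euclidean ball $B := B_2^d(\by,r) \subseteq K$ with $\by = \bx - r\bu$. For a parameter $s\in(0,r)$ to be fixed below, set $\bz := \bx - s\bu$. Then $\|\bz - \by\|_2 = r - s$, so $\bz \in B \subseteq K$, and $\bz\cdot\bu = h(K,\bu) - s$.

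The key geometric observation is a uniform cap-height bound: for every $\bv\in\S^{d-1}$, the ball cap $B \cap H^+(\bv, \bz\cdot\bv)$ has height
\begin{equation*}
    h(B,\bv) - \bz\cdot\bv = r + (\by - \bz)\cdot\bv = r + (s-r)\,\bu\cdot\bv,
\end{equation*}
which, as an affine function of $\bu\cdot\bv\in[-1,1]$ with negative slope $s-r$, attains its minimum $s$ at $\bv=\bu$. Combining the inclusion $K \cap H^+(\bv, \bz\cdot\bv) \supseteq B \cap H^+(\bv, \bz\cdot\bv)$ with $\phi \geq \alpha$ on $K$ and Lemma~\ref{lem:ball_cap} (applicable as long as $s < 2r/d$), we obtain
\begin{equation*}
    \Vol_d^\phi\bigl(K \cap H^+(\bv, \bz\cdot\bv)\bigr) \;\geq\; \alpha\, \Vol_d\bigl(C^d(r,s)\bigr) \;>\; \frac{\alpha\, a_d}{2}\, r^{(d-1)/2}\, s^{(d+1)/2}
\end{equation*}
uniformly in $\bv\in\S^{d-1}$.

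It then suffices to choose $s := C\,\delta^{2/(d+1)}$ with $C := \bigl(2/(\alpha\, a_d\, r^{(d-1)/2})\bigr)^{2/(d+1)}$, which equates the right-hand side above to $\delta$. Taking $\delta_0 := \tfrac{\alpha a_d}{2}\, r^{(d-1)/2} (2r/d)^{(d+1)/2}$ ensures $s < 2r/d$ for every $\delta<\delta_0$, so the construction is legal and we deduce $\mcd_{K,\phi}(\bz) > \delta$. Lemma~\ref{lem:MCD} then yields $\bz \in \interior K_\delta^\phi$, and hence $h(K_\delta^\phi, \bu) \geq \bz\cdot\bu = h(K,\bu) - C\,\delta^{2/(d+1)}$, as required. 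The main obstacle is the uniform cap-height bound: the argument succeeds only because the inscribed ball of radius $r$ protects $\bd K$ near $\bx$ not just in the direction $\bu$ of that boundary point, but simultaneously against every potential separating direction $\bv$.
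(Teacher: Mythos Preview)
Your proof is correct and follows essentially the same approach as the paper's: both place the rolling ball $B=B_2^d(\bx-r\bu,r)$ inside $K$ and bound the support-function difference by the height of a ball cap of volume $\delta/\alpha$, invoking Lemma~\ref{lem:ball_cap}. The only cosmetic difference is that the paper routes through the monotonicity $K_\delta^\phi \supseteq B(r)_{\delta/\alpha}$ (Lemma~\ref{lem:mono_float}) and the explicit floating body of a ball, whereas you verify $\mcd_{K,\phi}(\bz)>\delta$ directly via Lemma~\ref{lem:MCD}; the uniform cap-height computation you highlight is exactly what underlies the paper's use of monotonicity.
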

\begin{proof}
    The general idea of the proof is to use a ball of radius $r$ and a uniform density $\alpha$ to approximate the weighted volume of a cap of $K$, see Figure \ref{fig:upper_bound_support}. First we recall that almost every normal vector $\bu\in\S^{d-1}$ of $K$ is a regular normal vector, see \cite[Thm.\ 2.2.11]{Schneider:2014}, and therefore the face set $F(K,\bu)=\{\by\in K : \by\cdot \bu = h_K(\bu)\}=\{\bx\}$ is a singular boundary point $\bx$. This also implies that the support function $h_K$ is differentiable at $\bu$ and $\nabla h_K(\bu) = \bx$, see \cite[Cor.\ 1.7.3]{Schneider:2014}. 
    Furthermore, since $K$ is of class $\cC^1$, the boundary point is regular, which yields $\bn_K(\bx)=\bu$. 
    
    Since $K$ admits a rolling ball of radius $r$ it contains the ball $B(r):=(\bx-r\bu)+B_2^d(r)$ of radius $r$ that touches $\bd K$ from the inside at $\bx$, and therefore
    $K\supseteq B(r)$.
    Hence, by Lemma \ref{lem:mono_float}, we conclude
    \begin{equation*}
        K_\delta^\phi \supseteq B(r)_{\delta/\alpha} \neq \emptyset \quad \text{for all $\delta < \delta_1:= \frac{\alpha}{2} \Vol_d(B(r)) = \frac{\alpha}{2} r^d \Vol_d(B_2^d)$},
    \end{equation*}
    which yields
    \begin{equation*}
        h(K,\bu) - h(K_\delta^\phi,\bu) \leq h(B(r),\bu)-h(B(r)_{\delta/\alpha},\bu)=:\tilde{h}_\delta.
    \end{equation*}
    Now the floating body of $B(r)$ is again a Euclidean ball, and thus the cap 
    \begin{equation*}
    C^d(r,\tilde{h}_\delta):=B(r)\cap H^+(\bu,h_K(\bu)-\tilde{h}_\delta)
    \end{equation*}
    of $B(r)$ cuts off a volume of exactly $\delta/\alpha$ for all $\delta < \delta_1$. This yields, by Lemma \ref{lem:ball_cap}, that
    \begin{equation*}
        \frac{\delta}{\alpha} = \Vol_d(C^d(r,\tilde{h}_\delta)) \geq \frac{a_d}{2} r^{\frac{d-1}{2}} \tilde{h}_\delta^{\frac{d+1}{2}},
    \end{equation*}
    for all $\delta$ small enough so that $\tilde{h}_\delta < \frac{2r}{d}$. So by choosing $\delta < \delta_2:=\frac{\alpha}{2} (1/d)^{\frac{d+1}{2}} a_d r^d$, we find that
    \begin{equation*}
        \Vol_d(C^d(r,\tilde{h}_\delta)) 
        = \frac{\delta}{\alpha} < \frac{\delta_2}{\alpha}= \frac{a_d}{2} r^{\frac{d-1}{2}} \left(\frac{r}{d}\right)^{\frac{d+1}{2}} 
        < \Vol_d\left(C^d\left(r,\frac{r}{d}\right)\right),
    \end{equation*}
    which yields $\tilde{h}_\delta < \frac{2r}{d}$.
    Thus
    \begin{equation*}
        h(K,\bu) - h(K_\delta^\phi,\bu) \leq \tilde{h}_\delta \leq r^{-\frac{d-1}{d+1}} \left(\frac{2\delta}{a_d\alpha}\right)^{\frac{2}{d+1}},
    \end{equation*}
    for all 
    \begin{equation*}
        \delta < \delta_0:=\min\{\delta_1,\delta_2\} = \frac{\alpha}{2} r^d \min\{\Vol_d(B_2^d), (1/d)^{(d+1)/2}a_d\}.
    \end{equation*}
    This concludes the proof.
\end{proof}
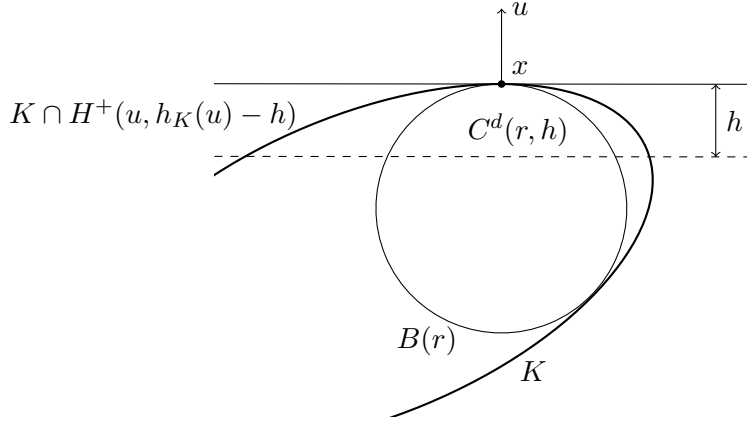
\begin{figure}
    \centering
    \begin{tikzpicture}[scale=2]
        \def\alp{50};
        \def\R{0.825};
        \def\t1{2};
        \def\h{0.6};
        
        \begin{scope}
        \clip (-1,-1) rectangle (2.5,2);
        %\draw (-1,-1) rectangle (2,2);
        
        \begin{scope}[rotate=22.7]
        
            \draw[thick] (0,0) circle (2 and 1);
            \draw[->] ({2*cos(\alp)},{sin(\alp)}) --++ ({0.5*1/sqrt(4-3*cos(\alp)^2)*cos(\alp)},{0.5*2/sqrt(4-3*cos(\alp)^2)*sin(\alp)})
                node[right] {$u$};
        
            \fill ({2*cos(\alp)},{sin(\alp)}) circle(0.025) node[above right] {$x$};
        
            \draw ({2*cos(\alp)-\R/sqrt(4-3*cos(\alp)^2)*cos(\alp)},{sin(\alp)-\R*2/sqrt(4-3*cos(\alp)^2)*sin(\alp)}) circle(\R);
        
            \draw ({2*cos(\alp)+\t1*2/sqrt(4-3*cos(\alp)^2)*sin(\alp)},{sin(\alp)-\t1/sqrt(4-3*cos(\alp)^2)*cos(\alp)}) 
                --++ ({-2*\t1*2/sqrt(4-3*cos(\alp)^2)*sin(\alp)},{2*\t1/sqrt(4-3*cos(\alp)^2)*cos(\alp)});
        
            \draw[dashed] ({\h*2*cos(\alp)+\t1*2/sqrt(4-3*cos(\alp)^2)*sin(\alp)},{\h*sin(\alp)-\t1/sqrt(4-3*cos(\alp)^2)*cos(\alp)}) 
                --++ ({-2*\t1*2/sqrt(4-3*cos(\alp)^2)*sin(\alp)},{2*\t1/sqrt(4-3*cos(\alp)^2)*cos(\alp)});
        \end{scope}
        \end{scope}
        
        \draw[<->] (2.3,1.2) -- (2.3,0.72) node[midway,right] {$h$};
        
        \node at (0.4,-0.5) {$B(r)$};
        \node at (1.1,-0.7) {$K$};
        
        \node at (-1.4,1) {$K\cap H^+(u,h_K(u)-h)$};
        \node at (1,0.9) {$C^d(r,h)$};
    \end{tikzpicture}
    \caption{Sketch for the proof of Lemma \ref{lem:upper_bound_support}.}
    \label{fig:upper_bound_support}
    \end{figure}

\subsection{Weighted Polar Volume of the Weighted Floating Body}\label{sec:weighted_polar_volume}

We aim to show the following main theorem:

\begin{theorem}[weighted volume derivative of the polar weighted floating body] \label{thm:main_weighted}
	Let $K\subset \R^d$ be a convex body of class $\cC^{1,1}$, and assume that $K$ contains the origin in the interior. Furthermore, let $\phi:K\to (0,\infty)$ be a strictly positive and continuous function, and let $\psi : \R^d \to [0,\infty)$ be a non-negative continuous function.
	Then
	\begin{align}\label{eqn:main1}
		a_d^{\frac{2}{d+1}}\lim_{\delta\to 0^+} \frac{\Vol_d^\psi\big((K_\delta^\phi)^\circ\big)-\Vol_d^\psi(K^\circ)}{\delta^{\frac{2}{d+1}}} 
		&=  \int_{\bd K} \kappa_o(K,\bx)^{\frac{d+2}{d+1}} \phi(\bx)^{-\frac{2}{d+1}} \psi(\bx^\circ)\, C_K(\dint \bx).
	\end{align}
\end{theorem}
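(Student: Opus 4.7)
The plan is to reduce the volume computation to a one-dimensional integral along radial directions via polar coordinates, pinpoint the asymptotics of the support-function increment via Theorem 2.2's parabolic approximation, and then finally convert back to a boundary integral on $\bd K$ using the area formula \eqref{eqn:int_formula}.

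First I would exploit the identity $\rho_{L^\circ}(\bu)=1/h_L(\bu)$ to write, in spherical coordinates,
\begin{equation*}
    \Vol_d^\psi\big((K_\delta^\phi)^\circ\big)-\Vol_d^\psi(K^\circ)
    = \int_{\S^{d-1}} \int_{1/h_K(\bu)}^{1/h(K_\delta^\phi,\bu)} \psi(r\bu)\, r^{d-1}\, \dint r\, \mathcal{H}^{d-1}(\dint\bu).
\end{equation*}
I would then appeal to a weighted analog of Leichtweiss' Theorem \ref{thm:leichtweiss}, established by the same arguments, to ensure that for all sufficiently small $\delta$ the tangent hyperplane to $K_\delta^\phi$ at normal $\bu$ cuts off $\phi$-volume exactly $\delta$ from $K$; i.e.\ $h(K_\delta^\phi,\bu)=h_K(\bu)-h_\delta(\bu)$ with $h_\delta(\bu)$ defined implicitly by $\Vol_d^\phi(K\cap H^+(\bu,h_K(\bu)-h_\delta(\bu)))=\delta$. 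For each $\bu\in\S^{d-1}$ with $\bx:=\nabla h_K(\bu)\in\bd K$, expand the inner $r$-integral as $\psi(\bx^\circ)\,\rho_{K^\circ}(\bu)^{d-1}(\rho_{(K_\delta^\phi)^\circ}(\bu)-\rho_{K^\circ}(\bu))$ up to $o(h_\delta(\bu))$, and use
\begin{equation*}
    \rho_{(K_\delta^\phi)^\circ}(\bu)-\rho_{K^\circ}(\bu)
    = \frac{h_\delta(\bu)}{h_K(\bu)(h_K(\bu)-h_\delta(\bu))}
    = \frac{h_\delta(\bu)}{h_K(\bu)^2}(1+o(1)),
\end{equation*}
to obtain the pointwise asymptotic $\psi(\bx^\circ)\,h_\delta(\bu)/h_K(\bu)^{d+1}(1+o(1))$.

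Next I need the rate of $h_\delta(\bu)$. At a.e.\ $\bu\in\S^{d-1}$ where both the parabolic approximation of Theorem~\ref{thm:approx_parabola} holds and $H_{d-1}(K,\bx)>0$, the cap $K\cap H^+(\bu,h_K(\bu)-h)$ is sandwiched between two elliptic paraboloids of heights $h$. Computing the volume of such a paraboloid directly, together with continuity of $\phi$ at $\bx$, yields
\begin{equation*}
    \Vol_d^\phi(K\cap H^+(\bu,h_K(\bu)-h)) = \phi(\bx)\, a_d\, H_{d-1}(K,\bx)^{-1/2}\, h^{(d+1)/2}\,(1+o(1)).
\end{equation*}
Inverting this relation gives
\begin{equation*}
    h_\delta(\bu)= \big(a_d\phi(\bx)\big)^{-2/(d+1)} H_{d-1}(K,\bx)^{1/(d+1)}\, \delta^{2/(d+1)}\,(1+o(1)).
\end{equation*}

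To swap limit and integral, Lemma~\ref{lem:upper_bound_support} supplies the uniform bound $h_\delta(\bu)\leq C\delta^{2/(d+1)}$, which, together with continuity of $\psi$ and strict positivity of $h_K$ (since $\bo\in\interior K$), produces a $\delta$-uniform, $\mathcal{H}^{d-1}$-integrable majorant on $\S^{d-1}$; dominated convergence then gives
\begin{equation*}
    a_d^{2/(d+1)}\lim_{\delta\to 0^+}\frac{\Vol_d^\psi((K_\delta^\phi)^\circ)-\Vol_d^\psi(K^\circ)}{\delta^{2/(d+1)}}
    = \int_{\S^{d-1}} \psi(\bx^\circ)\,\phi(\bx)^{-2/(d+1)}\frac{H_{d-1}(K,\bx)^{1/(d+1)}}{h_K(\bu)^{d+1}}\,\mathcal{H}^{d-1}(\dint\bu).
\end{equation*}
Finally, pulling back via \eqref{eqn:int_formula}, writing $h_K(\bu)=\bx\cdot\bn_K(\bx)$, and using $\kappa_o(K,\bx)=H_{d-1}(K,\bx)/(\bx\cdot\bn_K(\bx))^{d+1}$ along with $C_K(\dint\bx)=(\bx\cdot\bn_K(\bx))\mathcal{H}^{d-1}(\dint\bx)$ consolidates the integrand into $\kappa_o(K,\bx)^{(d+2)/(d+1)}\phi(\bx)^{-2/(d+1)}\psi(\bx^\circ)$ integrated against $C_K$, which is the claimed right-hand side.

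The main obstacle is the uniform integrable majorant needed for dominated convergence: the parabolic asymptotic for $h_\delta(\bu)$ holds only almost everywhere, so one must separately bound $h_\delta$ globally by the rolling-ball comparison of Lemma~\ref{lem:upper_bound_support} and control $\psi$ on a neighborhood of $\bd K^\circ$ (which is compact since $\bo\in\interior K$). A secondary subtlety is justifying the weighted version of Leichtweiss' regularity (Theorem~\ref{thm:leichtweiss}) used to identify $h(K_\delta^\phi,\bu)=h_K(\bu)-h_\delta(\bu)$; this follows by reprising his argument with the lower bound $\phi\geq\alpha>0$ in place of Lebesgue measure.
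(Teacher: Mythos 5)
Your overall architecture—polar coordinates to write the volume difference as an integral over $\S^{d-1}$, a pointwise asymptotic for the support-function drop, dominated convergence via Lemma~\ref{lem:upper_bound_support}, and a final transfer to $\bd K$ via \eqref{eqn:int_formula}—matches the paper's. However, there is a genuine gap in the pointwise asymptotic step.

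You derive the rate by first asserting a weighted analog of Leichtweiss' Theorem~\ref{thm:leichtweiss} to identify $h(K_\delta^\phi,\bu)=h_K(\bu)-h_\delta(\bu)$ for all $\bu$ and all small $\delta$, where $h_\delta$ is the weighted cap-height function. This is exactly the kind of identity the paper warns about before Theorem~\ref{thm:leichtweiss}: ``in general a hyperplane that cuts off $\phi$-volume $\delta$ from $K$ is not necessary tangent to the convex floating body.'' Theorem~\ref{thm:leichtweiss} is stated and proved only for the uniform weight; whether its proof (Leichtweiss' Satz~1~\&~2) survives the introduction of a merely continuous density $\phi$ is not something you can dispatch with ``established by the same arguments''—it would require re-examining Leichtweiss' centroidal/convexity arguments under the weight, and that is a nontrivial assertion you would need to supply. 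More to the point, the paper shows the identity is unnecessary. Its Theorem~\ref{thm:limit_weights} obtains the pointwise limit of $h(K,\bu)-h(K_\delta^\phi,\bu)$ by a two-sided squeeze: the \emph{lower} bound follows simply because any supporting hyperplane of $K_\delta^\phi$ cuts off $\phi$-volume at least $\delta$ (an immediate consequence of Definition~\ref{def:WFB}, no Leichtweiss needed), and the \emph{upper} bound follows from monotonicity of the weighted floating body (Lemma~\ref{lem:mono_float}) applied to an inscribed paraboloid, reducing to the exact paraboloid computation of \cite{MW:2000}. If you replace your appeal to a weighted Leichtweiss theorem with this two-sided comparison, the rest of your argument (the $o(h_\delta)$ expansion of the inner $r$-integral, the paraboloid volume asymptotic, the DCT majorant, and the pull-back by the area formula) is correct and coincides with the paper's proof of Theorem~\ref{thm:main_weighted}.
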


Theorem \ref{thm:main_weighted} extends \cite[Thm.~8]{MW:2000} where $K$ is assumed to be of class $\cC^2_+$ and $\phi=\psi=1$ are uniform weights. 
Note that the theorem is still true if one considers $\phi$ to be defined near $\bd K$, say $\phi:K\setminus K_{\delta_0}\to (0,+\infty)$ for some fixed $\delta_0>0$, and $\psi$ to be defined near $\bd K^\circ$, say $\psi: (K_{\delta_0})^\circ\setminus (\interior K^\circ) \to [0,+\infty)$.
We also note that $\cC^1_{\mathrm{sc}}$-regularity is in general not enough for the limit to be finite as the following example shows.

\begin{figure}[h]
    \centering
    \begin{tikzpicture}[scale=3]
        
        \def\alp{20};
        \def\h{0.6};

        \def\x0{\h/cos(\alp)};
        \def\y{(1-\x0^(1.2))^(1/1.2)/(((1-\x0^(1.2))/\x0)^(0.2/1.2)-tan(\alp))}

        \fill[black!20] ({\x0},0) -- ({\x0}, {(1-(\x0)^(1.2))^(1/1.2)}) -- ({\x0-tan(\alp)*\y},{\y}) -- cycle;
        \fill[black!10, domain={-(1-(\x0)^(1.2))^(1/1.2)}:{(1-(\x0)^(1.2))^(1/1.2)}, variable=\x, smooth] plot ({(1-abs(\x)^(1.2))^(1/1.2)}, {\x});
        
        \draw[->,gray] (-1.2,0) -- (1.2,0) node[below] {$\be_1$};
        \draw[->,gray] (0,-1.2) -- (0,1.2) node[left] {$\be_2$};
        
        \node[below, gray] at (1,0) {$1$};
        \node[left, gray] at (0,1) {$1$};
        \node[below left, gray] at (0,0) {$o$};

        \fill ({\x0-tan(\alp)*\y},{\y}) circle(0.02) node[above right] {$(x,y)$};

        \draw[thick, domain=-1.0:1.0, variable=\x, smooth] plot ({\x}, {(1-abs(\x)^(1.2))^(1/1.2)});
        \draw[thick, domain=-1.0:1.0, variable=\x, smooth] plot ({\x}, {-(1-abs(\x)^(1.2))^(1/1.2)});
        
        \draw[dotted, domain={2*\x0-1}:{\x0}, variable=\x, smooth] plot ({\x}, { (1-abs(2*\h/cos(\alp)-\x)^(1.2))^(1/1.2)});
        \draw[dotted, domain={2*\x0-1}:{\x0}, variable=\x, smooth] plot ({\x}, {-(1-abs(2*\h/cos(\alp)-\x)^(1.2))^(1/1.2)});

        \draw[->] (0,0) -- (\alp:1) node[right] {$\bu$};
        
        \draw (0.25,0) arc (0:\alp:0.25) node[midway, left,xshift=0.12cm, yshift=-0.03cm] {$\alpha$};
        
        \node at (-0.8,-0.8) {$x^p+y^p=1$};
        \node at (-0.8,0.8) {$B_p^2$};
        
        \draw ({\h*cos(\alp)-sin(\alp)},{\h*sin(\alp)+cos(\alp)}) -- ({\h*cos(\alp)+sin(\alp)},{\h*sin(\alp)-cos(\alp)}) node[right]{$H(\bu,h(B_p^2,\bu)-h)$};
        
        \draw ({\h/cos(\alp)},0.7) -- ({\h/cos(\alp)},-0.7) node[below] {$H(\be_1,x_0)$};
        \fill ({\x0},{(1-(\x0)^(1.2))^(1/1.2)}) circle(0.02) node[above right] {$(x_0,y_0)$};
        
        \fill ({\x0},0) circle(0.02);
        
        \draw ({(\h-0.07)*cos(\alp)},{(\h-0.07)*sin(\alp)}) -- ({(\h-0.07)*cos(\alp)+0.07*sin(\alp)},{(\h-0.07)*sin(\alp)-0.07*cos(\alp)})
                                                          -- ({(\h)*cos(\alp)+0.07*sin(\alp)},{(\h)*sin(\alp)-0.07*cos(\alp)});
        
        \node at (0.3,0.4) {$A(\bu)$};
    \end{tikzpicture}
    \caption{Sketch for Example \ref{ex:Lp-ball}.}
    \label{fig:Lp-ball}
\end{figure}
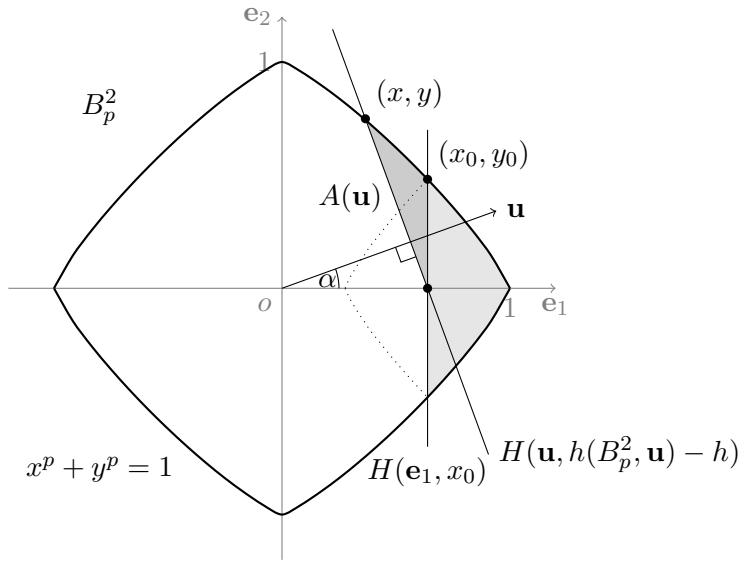

\begin{example}\label{ex:Lp-ball}
    We consider the $2$-dimensional convex body $B_p^2 := \{(x,y)\in\R^2: |x|^p+|y|^p\leq~1\}$ for $p\in(1,2)$ which is of class $\cC^1_{\mathrm{sc}}$. Then the area of a  cap of height $h$ in direction $\bu=(\cos\alpha,\sin\alpha)\in\S^1$ of $B_p^2$ can be estimated by
    \begin{equation}\label{eqn:example1}
        0\leq \Vol_2(B_p^2 \cap H^+(\bu, h(B_p^2,\bu)-h)) - \Vol_2(B_p^2 \cap H^+(\be_1, x_0)) \leq \Vol_2(A(\bu)),
    \end{equation}
    where $x_0:=(h(B_p^2,\bu)-h)\sec \alpha$, which is the distance of the cap $H^+(\be_1,x_0)$ from the origin. Further $y_0=f(x_0):=(1-x_0^p)^{1/p}$ is the $y$-coordinate where $(x_0,y_0)\in\bd B_p^2\cap H(\be_1,x_0)$. Moreover, $A(\bu)$ is the triangle spanned by $(x_0,0)$, $(x_0,y_0)$ and $(x,y)$, where $(x,y)$ are the coordinates of the intersection of $H(\bu,h(B_p^2,\bu)-h)$ with the tangent line of $\bd B_p^2$ at $(x_0,y_0)$. 
    See Figure \ref{fig:Lp-ball}. Thus, with an absolute constants $C_0,C_1>0$,
    \begin{align*}
        1+C_0\alpha^{\frac{p}{p-1}}-h\geq x_0 &= (h(B_p^2,\bu) - h)\sec \alpha = (1+(\tan \alpha)^{\frac{p}{p-1}})^{\frac{p-1}{p}} - h \sec \alpha\geq  1-h\sec\alpha,\\
        0\leq y_0 &= f(x_0) = (1-x_0^p)^{1/p} \leq (ph\sec\alpha)^{1/p},\\
        x_0-x &= y \tan \alpha = f'(x_0)(y_0-y) = -\left(\frac{x_0}{y_0}\right)^{p-1} (y_0-y), \text{ and }\\
        y &= \frac{y_0}{1-(y_0/x_0)^{p-1}\tan\alpha} \leq y_0 \left(1+C_1 \alpha h^{\frac{p-1}{p}}\right).
    \end{align*}
    We further estimate, with an absolute constant $C_2,C_3>0$,
    \begin{align*}
        \Vol_2(B_p^2\cap H^+(\be_1,x_0)) &= 2 \int_{x_0}^1 (1-x^p)^{1/p}\, \dint x,
            \leq 2(1-x_0^p)^{\frac{p+1}{p}},
            \leq 2 (ph\sec\alpha)^{\frac{p+1}{p}},\\
        \Vol_2(B_p^2\cap H^+(\be_1,x_0)) &\geq \frac{2}{p+1} (1-x_0^p)^{\frac{p+1}{p}} \geq C_2 \left(h-C_0\alpha^{\frac{p}{p-1}}\right)^{\frac{p+1}{p}}, \text{ and }\\
        \Vol_2(A(\bu)) &\leq \frac{\tan \alpha}{2} yy_0 \leq \frac{\tan\alpha}{2} (ph\sec\alpha)^{2/p} \left(1+C_1\alpha h^{\frac{p-1}{p}}\right) \leq C_3 \alpha h^{2/p}.
    \end{align*}
    We derive by \eqref{eqn:example1}
    \begin{equation*}
        C_2 \left(h-C_0\alpha^{\frac{p}{p-1}}\right)^{\frac{p+1}{p}} 
        \leq \Vol_2(B_p^2 \cap H^+(\bu, h(B_p^2,\bu)-h)) 
        \leq 2(ph\sec \alpha)^{\frac{p+1}{p}} + C_3 \alpha h^{\frac{2}{p}}.
    \end{equation*}
    For $\delta\in(0,1)$ we implicitly define $h_\delta(\bu)$ by $\delta = \Vol_2(B_p^2\cap H^+(\bu, h(B_p^2,\bu)- h_\delta(\bu))$. Then, for all $\alpha \leq \delta^{\frac{p-1}{p+1}}$ and some absolute constants $C_4,C_5>0$, we have
    \begin{align*}
        \delta &\leq h_\delta(\bu)^{2/p} \left(2 (p\sec\alpha)^{\frac{p+1}{p}} h_\delta(\bu)^{\frac{p-1}{p}}+C_3\alpha\right) \\
        &\leq \delta^{\frac{p-1}{p+1}} h_\delta(\bu)^{2/p} \left(2(p\sec\alpha)^{\frac{p+1}{p}} (C_2^{-\frac{p}{p+1}}+C_0)^{\frac{p-1}{p}}+C_3\right) 
        \leq C_4 \delta^{\frac{p-1}{p+1}} h_\delta(\bu)^{2/p} 
    \end{align*}
    which yields
    \begin{equation*}
        h(B_p^2,\bu)-h((B_p^2)_\delta,\bu) \geq h_\delta(\bu) 
        \geq C_5 \delta^{\frac{p}{p+1}},
    \end{equation*}
    for all $\delta>0$ small enough.
    Thus, since $h((B_2^p)_\delta,\bu)\leq h(B_2^p,\bu) \leq \sqrt{2}$, we find for some constant $C_6>0$,
    \begin{align*}
        \Vol_2((B_p^2)_\delta^\circ) - \Vol_2((B_p^2)^\circ) 
        &= \int_{\S^1} \frac{h(B_p^2,\bu)^2 - h((B_p^2)_\delta,\bu)^2}{h(B_p^2,\bu)^2 h((B_p^2)_\delta,\bu)^2}\, \dint\bu
        \geq \frac{1}{\sqrt{2}}\int_{\S^1} h_\delta(\bu) \, \dint \bu\\
        &\geq \frac{C_5}{\sqrt{2}} \int_{0}^{\delta^{\frac{p-1}{p+1}}}  \delta^{\frac{p}{p+1}} \dint \alpha
         = C_6 \delta^{\frac{2p-1}{p+1}}
    \end{align*}
    We conclude, that for $1<p<\frac{5}{4}$, we have
    \begin{equation*}
        \liminf_{\delta\to 0^+} \frac{\Vol_2((B_p^2)_\delta^\circ) - \Vol_2((B_p^2)^\circ) }{\delta^{2/3}} \geq  C_6 \liminf_{\delta\to 0^+} \delta^{\frac{2p-1}{p+1}-\frac{2}{3}} = +\infty.
    \end{equation*}
    The bound $p>\frac{5}{4}$ appears to be sharp, since by \cite[Ex.\ 8]{SW:2004}, for $d=2$ we have
    \begin{equation*}
        \as_{-d(d+2)}(B_p^d) = \as_{-\frac{d}{d+2}}(B_q^d)= \begin{cases} 
                                +\infty, &\text{if $p\leq \frac{5}{4}$},\\
                                \frac{4}{p} (p-1)^{4/3}\, \Gamma(\frac{4p-5}{3p})^2 / \Gamma(\frac{2(4p-5)}{3p}), &\text{if $p> \frac{5}{4}$}.
                               \end{cases}
    \end{equation*}
\end{example}

The following lemma and its proof are a generalization of \cite[Claim on page 311]{MW:2000}.
\begin{lemma}[uniform integrable upper bound] \label{lem:interchange}
    Let $K\subset \R^d$ be a convex body of class $\cC^{1,1}$ and let $\phi:K\to (0,\infty)$ be a integrable function such that $\inf_K \phi>0$.
    Furthermore, let $\psi: \R^d \to (0,\infty)$,
    be an integrable function such that $\sup \psi < +\infty$.
    Then there exists $C = C(K,\phi,\psi) > 0$ and $\delta_0=\delta_0(K,\phi,\psi)>0$ such that for all $\delta \in (0,\delta_0)$
    \begin{equation*}
        \int_{h(K_\delta^\phi,\bu)}^{h(K,\bu)} \psi\left(\bu/s\right) s^{-(d+1)}\, \dint{s} \leq C \delta^{\frac{2}{d+1}}, \qquad \text{for almost all $\bu\in \S^{d-1}$.}
    \end{equation*}
\end{lemma}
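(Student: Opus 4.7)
The argument reduces to two routine estimates. Since $M := \sup \psi < +\infty$, the integrand satisfies $\psi(\bu/s)\, s^{-(d+1)} \leq M\, s^{-(d+1)}$, so the integral is dominated by $M$ times the integral of $s^{-(d+1)}$ over the interval $[h(K_\delta^\phi,\bu),\, h(K,\bu)]$. The plan is to control this by bounding both the length of the interval and the maximum of $s^{-(d+1)}$ on it.

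For the length, I would invoke Lemma \ref{lem:upper_bound_support} directly: since $K\in\cC^{1,1}$ (so $K$ admits a rolling ball of some radius $r_K > 0$) and $\alpha := \inf_K \phi > 0$, there exist $\delta_1 > 0$ and $C_1 > 0$ with $0 \leq h(K,\bu) - h(K_\delta^\phi,\bu) \leq C_1 \delta^{2/(d+1)}$ for almost all $\bu\in\S^{d-1}$ and all $\delta\in(0,\delta_1)$ (the left inequality holds because tangent half-spaces to $K$ are admissible in the intersection defining $K_\delta^\phi$, so $K_\delta^\phi\subseteq K$). For the integrand, I would use that in the setting where the polar body $K^\circ$ is meaningful (as implicitly required here, and explicit in Theorem \ref{thm:main_weighted}), the origin lies in the interior of $K$, hence by continuity and compactness $r_0 := \min_{\bu\in\S^{d-1}} h(K,\bu) > 0$. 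Shrinking $\delta_0 \leq \delta_1$ so that $C_1\delta_0^{2/(d+1)} \leq r_0/2$ then forces $h(K_\delta^\phi,\bu) \geq r_0/2$ for all $\delta\in(0,\delta_0)$, and consequently $s^{-(d+1)} \leq (r_0/2)^{-(d+1)}$ throughout the integration interval.

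Combining the two estimates yields
\begin{equation*}
    \int_{h(K_\delta^\phi,\bu)}^{h(K,\bu)} \psi(\bu/s)\, s^{-(d+1)}\, \dint s \;\leq\; M \left(\frac{r_0}{2}\right)^{-(d+1)} C_1\, \delta^{\frac{2}{d+1}}
\end{equation*}
for almost all $\bu\in\S^{d-1}$ and $\delta\in(0,\delta_0)$, as claimed, with $C := M (r_0/2)^{-(d+1)} C_1$ depending only on $K$, $\phi$, and $\psi$. No real obstacle is expected: the substantive content is already contained in Lemma \ref{lem:upper_bound_support}. The role of the present lemma is simply to package that estimate into a dominating bound that is uniform in $\bu$, so that dominated convergence can later be applied inside the $\delta\to 0^+$ limit in the proof of Theorem \ref{thm:main_weighted} after expressing volumes via polar radial integration.
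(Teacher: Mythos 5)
Your proposal is correct and follows essentially the same route as the paper: bound $\psi$ by $\sup\psi$, bound the length of the integration interval by Lemma~\ref{lem:upper_bound_support}, and bound $s^{-(d+1)}$ by a uniform lower bound on $h(K_\delta^\phi,\cdot)$. The only (cosmetic) difference is that the paper obtains the lower bound by picking $\delta$ small enough that a fixed ball $\rho B_2^d\subset\interior K$ is contained in $K_\delta^\phi$, whereas you re-use the quantitative estimate from Lemma~\ref{lem:upper_bound_support} itself to force $h(K_\delta^\phi,\bu)\geq \tfrac{1}{2}\min_{\S^{d-1}}h_K$; both are valid and you correctly flag that the implicit ``origin in the interior'' hypothesis from Theorem~\ref{thm:main_weighted} is needed.
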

\begin{proof}
	We set $\alpha := \inf_K \phi$ and $\beta:= \sup \psi$.
	We may assume w.l.o.g.\ that $K$ contains the origin in the interior. Then there is $\rho > 0$ such that
	\begin{equation*}
		2\rho B_2^d \subset K \subset \frac{1}{2\rho}B_2^d.
	\end{equation*}
	We choose $\delta_1$ so small that 
	\begin{equation}\label{eqn:delta0_cond1}
		\rho B_2^d \subset K_\delta^\phi,
	\end{equation}
	for all $\delta \in (0,\delta_1)$. This yields
	\begin{equation*}
		h(K,\bu) \geq h(K_\delta^\phi,\bu) \geq \rho,
	\end{equation*}
	for all $\delta<\delta_1$ and $\bu\in \S^{d-1}$.
	Thus
	\begin{equation*}
		\int_{h(K_\delta^\phi,\bu)}^{h(K,\bu)} \psi\left(\bu/s\right) s^{-(d+1)}  \, \dint{s}  
		\leq \frac{\beta}{\rho^{d+1}} \left(h(K,\bu)-h(K_\delta^\phi,\bu)\right).
	\end{equation*} 
	%Since $K$ admits a rolling ball we may assume that $r:=\mathrm{roll}(K)>0$.
	By Lemma \ref{lem:upper_bound_support} there exists $C_0>0$ and $\delta_0 \in (0,\delta_1)$ such that
	\begin{equation*}
		\frac{\beta}{\rho^{d+1}}\frac{h(K,\bu)-h(K_\delta^\phi,\bu)}{\delta^{\frac{2}{d+1}}}
		\leq \frac{\beta}{\rho^{d+1}} C_0 =: C,
	\end{equation*}
	for all $\delta \leq \delta_0$ and almost all $\bu\in\S^{d-1}$.
\end{proof}

Meyer and Werner \cite[p.\ 311]{MW:2000} showed the following theorem for convex bodies of class $\cC^2_+$ and the uniform weight $\phi\equiv 1$.

\begin{theorem}[pointwise limit] \label{thm:limit_weights}
    Let $K\subset \R^d$ be a convex body of class $\cC^{1,1}$ and let $\phi:K\to (0,\infty)$ be a continuous function. Then
	\begin{equation}
        \lim_{\delta\to 0^+} \frac{h(K,\bu)-h(K_\delta^\phi,\bu)}{\delta^{\frac{2}{d+1}}} 
            = \left(\frac{H_{d-1}(K,\nabla h_K(\bu))}{a_d^2 \,\phi(\nabla h_K(\bu))^2}\right)^{\frac{1}{d+1}},
	 \end{equation}
	 for almost all $\bu\in\S^{d-1}$.
\end{theorem}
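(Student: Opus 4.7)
My plan is to compare $h_K(\bu) - h(K_\delta^\phi, \bu)$ with the auxiliary cap-height $h_\delta(\bu) > 0$ defined implicitly by
\begin{equation*}
    \Vol_d^\phi\bigl(K \cap H^+(\bu, h_K(\bu) - h_\delta(\bu))\bigr) = \delta,
\end{equation*}
which is well-defined for $\delta$ small since $\phi > 0$ is continuous (so the left-hand side is strictly increasing and continuous in $h_\delta(\bu)$). Fix $\bu \in \S^{d-1}$ in the full-measure set where Theorem \ref{thm:approx_parabola} applies, and write $\bx := \nabla h_K(\bu) \in \bd K$ for the associated normal boundary point. I will establish $h_\delta(\bu) \leq h_K(\bu) - h(K_\delta^\phi, \bu) \leq (1+\eta) h_\delta(\bu)$ for every $\eta > 0$ and all sufficiently small $\delta$, together with the asymptotic $\lim_{\delta \to 0^+} h_\delta(\bu)/\delta^{2/(d+1)} = (H_{d-1}(K,\bx)/(a_d^2 \phi(\bx)^2))^{1/(d+1)}$. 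Sending $\eta \to 0$ then proves the theorem.

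For the asymptotics of $h_\delta(\bu)$, I invoke the paraboloid sandwich of Theorem \ref{thm:approx_parabola}(iii): for every $\varepsilon \in (0,1)$, once $h_\delta(\bu)$ is small enough,
\begin{equation*}
    P^d_{(\bx,K)}(1-\varepsilon, h_\delta(\bu)) \subseteq K \cap H^+(\bu, h_K(\bu) - h_\delta(\bu)) \subseteq P^d_{(\bx,K)}(1+\varepsilon, h_\delta(\bu)).
\end{equation*}
Direct slice-wise integration over the ellipsoidal level sets of $Q^2_{(\bx,K)}$ (whose Hessian has determinant $H_{d-1}(K,\bx)$) gives $\Vol_d(P^d_{(\bx,K)}(r,h)) = a_d r^{(d-1)/2} h^{(d+1)/2} / \sqrt{H_{d-1}(K,\bx)}$, and continuity of $\phi$ at $\bx$ yields $\Vol_d^\phi = (\phi(\bx) + o(1)) \Vol_d$ on caps shrinking to $\{\bx\}$. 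Substituting into the defining equation for $h_\delta(\bu)$, solving, and letting first $\delta \to 0^+$ and then $\varepsilon \to 0$ yields the claimed limit value.

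The lower bound $h_K(\bu) - h(K_\delta^\phi, \bu) \geq h_\delta(\bu)$ is immediate from Definition \ref{def:WFB}: the half-space $H^-(\bu, h_K(\bu) - h_\delta(\bu))$ is among those defining $K_\delta^\phi$ as an intersection. For the matching upper bound I fix $\eta \in (0,1)$ and consider the candidate point $\by_\delta := \bx - (1+\eta) h_\delta(\bu) \bu$, satisfying $\by_\delta \cdot \bu = h_K(\bu) - (1+\eta) h_\delta(\bu)$. I claim $\by_\delta \in K_\delta^\phi$ for all small $\delta$, which will give $h(K_\delta^\phi, \bu) \geq \by_\delta \cdot \bu$. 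By Lemma \ref{lem:MCD}, the claim reduces to showing $\Vol_d^\phi(K \cap H^+(\bv, \by_\delta \cdot \bv)) \geq \delta$ for every $\bv \in \S^{d-1}$. I split according to the angle $\angle(\bu, \bv)$: for $\bv$ outside a small angular neighborhood of $\bu$, the hyperplane through $\by_\delta$ with normal $\bv$ converges as $\delta \to 0$ to a non-tangent hyperplane through $\bx$, so the cap $\phi$-volume is uniformly bounded below by a positive constant (by compactness plus uniqueness of the outer normal $\bu$ at the normal point $\bx$), which beats $\delta$ for small $\delta$; for $\bv$ inside that neighborhood, the paraboloid sandwich at $\bx$ and a tilt estimate show that the tilted cap height through $\by_\delta$ still exceeds $(1+\eta/2) h_\delta(\bu)$, whence its $\phi$-volume is at least $(1+\eta/2)^{(d+1)/2} \delta > \delta$ by the previous paragraph. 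Both regimes together give $\by_\delta \in K_\delta^\phi$, completing the proof. The main technical obstacle lies in the second regime: making the paraboloid sandwich uniform across tilted cutting directions $\bv$ near $\bu$, since the pointwise approximation of Theorem \ref{thm:approx_parabola} is only asserted for the support direction $\bu$; here I will rely on the global consequence of $\cC^{1,1}$-regularity (rolling ball, cf.\ Lemma \ref{lem:upper_bound_support}) to control the tilt-perturbed caps simultaneously.
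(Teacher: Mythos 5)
Your overall strategy is different from the paper's in a genuine way: the paper proves the upper bound on $h_K(\bu) - h(K_\delta^\phi,\bu)$ by the monotonicity of the weighted floating body (Lemma \ref{lem:mono_float}), comparing against the inner paraboloid $P_\kappa^d(1-\varepsilon, h_1) \subseteq \tilde{K}$ and then citing Meyer--Werner's computation of the floating body of a truncated paraboloid; you instead propose to verify membership $\by_\delta \in K_\delta^\phi$ directly via the minimal cap density characterization of Lemma \ref{lem:MCD}, checking all cutting directions $\bv$. Both routes channel the same local geometry, and your two-regime split (compactness for $\bv$ away from $\bu$, local analysis near $\bu$) is sound in outline; the lower bound $h_K(\bu) - h(K_\delta^\phi, \bu) \geq h_\delta(\bu)$ and the asymptotics of $h_\delta(\bu)$ are correct and match the paper's.

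However, there is a genuine gap in the second-regime estimate, and your proposed fix does not close it. You assert that the tilted cap height exceeding $(1+\eta/2)h_\delta(\bu)$ gives $\phi$-volume at least $(1+\eta/2)^{(d+1)/2}\delta$ ``by the previous paragraph,'' but the previous paragraph only gives the formula $\Vol_d(P^d_{(\bx,K)}(r,h)) = a_d r^{(d-1)/2} h^{(d+1)/2}/\sqrt{H_{d-1}(K,\bx)}$ for \emph{straight} (axis-aligned) paraboloid caps; a tilted cap does not have this form a priori, and the height $h_K(\bv) - \by_\delta\cdot\bv$ is measured along $\bv$, not along the paraboloid axis. The missing ingredient is the slant-cap identity for paraboloids: after the shear $\by \mapsto \by + (\tan\theta/\kappa)\be_1$ one sees that a cap of $\{z \geq \tfrac{\kappa}{2}\|\by\|^2\}$ cut by a hyperplane through $(0,(1+\eta)h_\delta)$ at angle $\theta$ to the axis is \emph{congruent} to a straight cap at the larger height $(1+\eta)h_\delta + \tan^2\theta/(2\kappa)$, hence has volume $\geq$ the straight cap's. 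Only with this observation does the comparison with $\delta$ (itself controlled by the outer paraboloid and $\phi(\bx)$) give the factor $(1+\eta)^{(d+1)/2}(1-\varepsilon)^{(d+1)/2}/(1+\varepsilon)^{(d+1)/2} > 1$. Your stated fallback --- the rolling ball / Lemma \ref{lem:upper_bound_support} --- cannot supply this: the rolling ball $B(r)\subset K$ touching at $\bx$ only yields a slant-cap lower bound proportional to $(r\kappa)^{(d-1)/2}$, and since $r\kappa \leq 1$ (with equality only when $\bd K$ locally coincides with a sphere of radius $r$) this constant is generically strictly below what is needed to beat $\delta$. So the tight tilted-cap bound must come from the paraboloid sandwich, not from the rolling ball; the latter is only good enough for the coarse $O(\delta^{2/(d+1)})$ bound of Lemma \ref{lem:upper_bound_support}, not for a sharp constant. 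This is precisely the work that the paper avoids by invoking monotonicity plus the known Meyer--Werner paraboloid result, so the paper's reduction is not merely stylistic: it circumvents the very slant-cap analysis you flag as the obstacle.
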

\begin{proof}
    The general idea is to use an approximating paraboloid in any normal boundary point and apply the result of Meyer \& Werner \cite[p.\ 311]{MW:2000} to the floating body of the paraboloid.
    
    By Theorem \ref{thm:approx_parabola} we may consider $\bu\in\S^{d-1}$ such that $\bx:=\nabla h_K(\bu)\in \bd K$ exists and is a normal boundary point with $H_{d-1}(K,\bx)>0$.
    Thus, we may use an equi-affine transformation $\alpha_\bx$ that maps $K$ to a position 
    $\tilde{K}=\alpha_\bx(K)$ where $\bx$ is mapped to the origin $o$, the direction $\bu$ to $-\be_d$ and the tangent hyperplane of $\bd K$ at $\bx$ to the hyperplane $\be_d^\bot\cong \R^{d-1}$. 
    We may also assume that the approximation paraboloid of $\bd K$ at $\bx$ is mapped to a radially symmetric paraboloid where the height in direction $\bu=\bn_K(\bx)$ is not changed, i.e.,
    \begin{equation*}
        P_\kappa^d(r,h) = \alpha_\bx(P^d_{(\bx,K)}(r,h)) = \left\{(\by,z)\in \R^{d-1} \times \R : h\geq z \geq \frac{\kappa}{2r} \|\by\|_2^2\right\},
    \end{equation*}
    for $\kappa := H_{d-1}(K,\bx)^{1/(d-1)}$. Note that $H_{d-1}(P_\kappa^d(r,h),o) = (\kappa/r)^{d-1} = r^{1-d} H_{d-1}(K,\bx)$. Also note that if $\be_1,\dotsc,\be_{d-1}$ are the principal directions of $T_\bx K$ and $\be_d = \bn_K(\bx) = \bu$, then $\alpha_\bx$ is determined by
    $\alpha_\bx(\by) = A(\by-\bx)$ where $A = \mathrm{diag}(\sqrt{\kappa/\kappa_1},\dotsc,\sqrt{\kappa/\kappa_{d-1}}, 1)$,
    and $\kappa_i=\kappa_i(K,x)>0$ are the (generalized) principal curvatures of $\bd K$ at $\bx$.
    
    Next, we set $\tilde{\phi}:=\phi\circ\alpha_{\bx}^{-1}$ and find that
    \begin{equation*}
        \Vol_d^\phi(\alpha_{\bx}^{-1}(B)) = \int_{B} \phi\circ\alpha_{\bx}^{-1}(\by) \left|\det \alpha_\bx\right|\, \dint \by = \Vol_d^{\tilde{\phi}}(B),
    \end{equation*}
    for all Borel $B\subset \tilde{K}$.
    Thus 
    \begin{equation*}
        \tilde{h}_\delta:=h(K,\bu) - h(K_\delta^\phi,\bu) = -h(\tilde{K}_\delta^{\tilde{\phi}}, -\be_d)>0.
    \end{equation*}
    
    By Theorem \ref{thm:approx_parabola} iii) we may approximate $\tilde{K}$ near $o$ by paraboloids. So let $\varepsilon>0$. Then there exists $h_0 = h_0(\varepsilon)>0$ such that
    \begin{equation*}
        P_\kappa^d(1-\varepsilon,h) \subset \tilde{K} \cap H^-(\be_d,h) \subset P_\kappa^d(1+\varepsilon,h) \quad \text{for all $h\in (0,h_0)$.}
    \end{equation*}
    Furthermore, since $\tilde{\phi}$ is continuous there is an open neighborhood $U$ of the origin such that for all $\by\in U\cap \tilde{K}$ we have that
    \begin{equation*}
        (1-\varepsilon) \phi(\bx) \leq \tilde{\phi}(\by) \leq (1+\varepsilon) \phi(\bx).
    \end{equation*}
    Thus there is also $h_1 \in (0,h_0)$ such that $P_\kappa^d(1+\varepsilon,h_1) \subset U$.
    Since $\tilde{h}_\delta = -h(\tilde{K}_\delta^{\tilde{\phi}},-\be_d)$ converges monotone to $0$ as $\delta \to 0^+$, there exists $\delta_0=\delta_0(\varepsilon,K,\phi)>0$, such that $\tilde{h}_\delta < h_1$ for all $\delta < \delta_0$. 
    
    Now, since a cap tangent to the floating body cuts off at least $\delta$, we find that for all $h\in(0,h_1)$
    \begin{align*}
        \delta 
        &\leq \Vol_d^{\tilde{\phi}}\left(\tilde{K}\cap H^-(\be_d,h)\right)
        \leq (1 + \varepsilon) \phi(\bx) \Vol_d\left(P_\kappa^d(1+\varepsilon,h)\right) \\
        &\leq (1 + \varepsilon) \phi(\bx) \int_{0}^h \left(\frac{2(1+\varepsilon)z}{\kappa}\right)^{\frac{d-1}{2}} \Vol_d(B_2^{d-1}) \dint z
        = (1+\varepsilon)^{\frac{d+1}{2}} \phi(\bx) a_d h^{\frac{d+1}{2}} \kappa^{-\frac{d-1}{2}},
    \end{align*}
    which yields
    \begin{equation*}
        \tilde{h}_\delta \geq \frac{1}{1+\varepsilon} \left(\frac{\delta}{a_d \phi(\bx)}\right)^{\frac{2}{d+1}} \kappa^{\frac{d-1}{d+1}} \quad 
        \text{for all $\delta < \delta_0$}.
    \end{equation*}
    Thus
    \begin{equation*}
        \liminf_{\delta\to 0^+} \frac{h(K,\bu) - h(K_\delta^\phi,\bu)}{\delta^{\frac{2}{d+1}}} 
        = \liminf_{\delta\to 0^+} \frac{\tilde{h}_\delta}{\delta^{\frac{2}{d+1}}} 
        \geq \frac{1}{1+\varepsilon} (a_d\phi(\bx))^{-\frac{2}{d+1}} \kappa^{\frac{d-1}{d+1}}.
    \end{equation*}

    For the other direction we note that
    \begin{equation*}
        \tilde{K} \supset \tilde{K} \cap H^-(\be_d,h_1) \supset P_\kappa^d(1-\varepsilon,h_1),
    \end{equation*}
    implies by the monotonicity of the weighted floating body that
    \begin{equation*}
        \tilde{K}_\delta^{\tilde{\phi}} \supset P_\kappa^d(1-\varepsilon,h_1)_{\xi} \quad \text{for all $\delta<\delta_0$,}
    \end{equation*}
    where $\xi := \delta/[(1-\varepsilon)\phi(\bx)]$. We also used the fact that $\tilde{\phi}(\by) \geq (1-\varepsilon)\phi(\bx)$ for all $\by\in P_\kappa^d(1-\varepsilon,h_1)\subset \tilde{K}\cap U$.
    This yields
    \begin{equation*}
        \tilde{h}_\delta = -h(\tilde{K}_\delta^{\tilde{\phi}}, -\be_d) \leq -h(P_\kappa^d(1-\varepsilon,h_1)_{\xi},-\be_d).
    \end{equation*}
    By the results of Meyer \& Werner \cite[p.\ 311]{MW:2000}, we conclude
    \begin{align*}
        \limsup_{\delta\to 0^+} \frac{h(K,\bu) - h(K_\delta^\phi,\bu)}{\delta^{\frac{2}{d+1}}} 
        &\leq ((1-\varepsilon)\phi(\bx))^{-\frac{2}{d+1}} \lim\sup_{\xi\to 0^+} \frac{-h(P_\kappa^d(1-\varepsilon,h_1)_\xi,-\be_d)}{\xi^{\frac{2}{d+1}}}\\
        &= \frac{1}{1-\varepsilon} (a_d\phi(\bx))^{-\frac{2}{d+1}} \kappa^{\frac{d-1}{d+1}}.
    \end{align*}
    Since $\varepsilon>0$ was chosen arbitrarily and since $\kappa^{d-1}=H_{d-1}(K,\bx)$ and $\bx=\nabla h_K(\bu)$ we conclude
    \begin{equation*}
        \lim_{\delta\to 0^+} \frac{h(K,\bu)-h(K_\delta^\phi,\bu)}{\delta^{\frac{2}{d+1}}} = \left(\frac{H_{d-1}(K,\nabla h_K(\bu))}{a_d^2 \phi(\nabla h_K(\bu))^2 }\right)^{\frac{1}{d+1}},
    \end{equation*}
    for almost all $\bu\in\S^{d-1}$.
\end{proof}

\begin{proof}[Proof of Theorem \ref{thm:main_weighted}]
    Using polar coordinates, we find that
	\begin{align*}
		\Vol_d^\psi\left(\big(K_\delta^\phi\big)^\circ\right) - \Vol_d^\psi\left( K^\circ \right) 
		&= \int_{\S^{d-1}} \int_{\rho(K^\circ,\bu)}^{\rho((K_\delta^\phi)^\circ,\bu)}  \psi(t \bu)t^{d-1} \, \dint{t} \, \mathcal{H}^{d-1}(\dint \bu)\\
		&= \int_{\S^{d-1}} \int_{h(K_\delta^\phi,\bu)}^{h(K,\bu)} \psi\left(\bu/s\right) s^{-(d+1)} \, \dint {s}\, \mathcal{H}^{d-1}(\dint \bu).
	\end{align*}
	Lemma \ref{lem:interchange}, the continuity of $\psi$, and the Dominated Convergence Theorem allow to interchange integration and limit. Hence
	\begin{align*}
		\lim_{\delta\to 0^+} \frac{\Vol_d^\psi\big((K_\delta^\phi)^\circ\big) - \Vol_d^\psi(K^\circ)}{\delta^{\frac{2}{d+1}}} 
		&= \int_{\S^{d-1}} \lim_{\delta\to 0^+} \frac{1}{\delta^{\frac{2}{d+1}}}
            \int_{h(K_\delta^\phi,\bu)}^{h(K,\bu)} \psi\left(\bu/s\right) s^{-(d+1)} \,\dint{s}\, \mathcal{H}^{d-1}(\dint \bu) \\
		&= \int_{\S^{d-1}} \frac{\psi(\bu/h(K,\bu))}{h(K,\bu)^{d+1}} 
            \lim_{\delta\to 0^+} \frac{h(K,\bu)-h(K_\delta^\phi,\bu)}{\delta^{\frac{2}{d+1}}}\, \mathcal{H}^{d-1}(\dint \bu).
	\end{align*}
	Thus, by Theorem \ref{thm:limit_weights} and by applying Federer's area formula to the Gauss map $\bu=\bn_K(\bx)$, see Theorem \ref{thm:approx_parabola}, we derive that
	\begin{align*}
		&\lim_{\delta\to 0^+} \frac{\Vol_d^\psi\big((K_\delta^\phi)^\circ\big)-\Vol_d^\psi(K^\circ)}{\delta^{\frac{2}{d+1}}}\\
		&\qquad = a_d^{-\frac{2}{d+1}} \int_{\S^{d-1}} \!\!\frac{\psi(\bu/h(K,\bu))}{h(K,\bu)^{d+1}} \phi(\nabla h_K(\bu))^{-\frac{2}{d+1}} H_{d-1}(K,\nabla h_K(\bu))^{1/(d+1)} \mathcal{H}^{d-1}(\dint \bu)\\
        &\qquad = a_d^{-\frac{2}{d+1}} \int_{\bd K} \kappa_o(K,\bx)^{\frac{d+2}{d+1}} \phi(\bx)^{-\frac{2}{d+1}} \psi(\bx^\circ)\, C_K(\dint \bx).\qedhere
	\end{align*}
\end{proof}

As an immediate application of Theorem \ref{thm:main_weighted} we derive the following proof of Theorem \ref{thm:V1_illumination_body} (see Section \ref{sec:real_analytic} for another proof).

\begin{proof}[Proof of Theorem \ref{thm:V1_illumination_body}]
    We first note that $V_1(K)$ can be expressed, using polar coordinates, by
    \begin{align*}
        V_1(K) &=  \frac{1}{\Vol_{d-1}(B_2^{d-1})} \int_{\S^{d-1}} h_K(\bu) \, \mathcal{H}^{d-1}(\dint \bu) \\
        &= \frac{1}{\Vol_{d-1}(B_2^{d-1})} \int_{\S^{d-1}} \int_{\rho_{K^\circ}(\bu)}^{\infty} \frac{1}{t^2}\, \dint t \, \dint\mathcal{H}^{d-1}(\dint \bu)\\
        &= \frac{1}{\Vol_{d-1}(B_2^{d-1})} \int_{\R^d\setminus K^\circ} \|\bx\|^{-(d+1)}\, \dint\bx.
    \end{align*}
    This was apparently first observed by Glasauer in \cite[Rmk.\ 3]{GG:1997}, see also \cite{BHK:2021, Ludwig:1999}.
    Hence, for $\tilde{\psi}(\bx) = \Vol_{d-1}(B_2^{d-1})^{-1}\|\bx\|_2^{-(d+1)}$, we have
    \begin{align*}
        \Delta_1([K,\bx],K) 
        &= V_1([K,\bx])-V_1(K) =\int_{K^\circ \setminus [K,\bx]^\circ} \tilde{\psi}(\bx)\, \dint \bx
        = \Vol_d^{\tilde{\psi}}\left(K^\circ \cap H^+\left(\frac{\bx}{\|\bx\|_2}, \frac{1}{\|\bx\|_2}\right)\right),
    \end{align*}
    where we used the fact that
    \begin{align*}
        K^\circ \cap H^-\left(\frac{\bx}{\|\bx\|_2}, \frac{1}{\|\bx\|_2}\right) 
        &= \{\by \in K^\circ : \by\cdot \bx \leq 1\} \\
        &= \{\by\in\R^d : \by\cdot [(1-t) \bz+ t\bx] \leq 1 \text{ for all $t\in[0,1]$ and $\bz\in K$}\}\\
        &= [K,\bx]^\circ.
    \end{align*}
    This yields
    \begin{align*}
        \mathcal{I}^{V_1}_\delta(K)^\circ
        &= \{\by\in\R^d : \text{$\bx\cdot \by \leq 1$ for all $\bx\in \R^d$ such that $\Delta_1([K,\bx],K)\leq \delta$}\}\\
        &= \left\{\by : 
            \text{$\bx\cdot \by \leq 1$ for all $\bx \in \R^d$ such that $\Vol_d^{\tilde{\psi}}\left(K^\circ\cap H^+\left(\frac{\bx}{\|\bx\|_2}, \frac{1}{\|\bx\|_2}\right)\right) \leq \delta$}
            \right\}\\
        &= \left\{\by : \text{$\bu\cdot \by\leq t$ for all $\bu \in \S^{d-1}$ and $t\in\R$ such that } \Vol_d^{\tilde{\psi}}\left(K^\circ\cap H^+\left(\bu, t\right)\right) \leq \delta\right\}\\
        &= \bigcap \{ H^- : \Vol_d^{\tilde{\psi}}(K^\circ \cap H^+)\leq \delta\} = (K^\circ)_\delta^{\tilde{\psi}}.
    \end{align*}
    Thus, since $\mathcal{I}_\delta^{V_1}(K)$ is a convex body by \cite[Lem.\ 1]{Schneider:2020}, we conclude that 
    \begin{equation}
        \mathcal{I}_\delta^{V_1}(K) = ((K^\circ)_\delta^{\tilde{\psi}})^\circ.
    \end{equation}
    Applying Theorem \ref{thm:main_weighted} and Lemma \ref{lem:integral_formula} we conclude that
    \begin{align*}
        \lim_{\delta\to 0^+} \frac{\Vol_d(\mathcal{I}_\delta^{V_1}(K)) - \Vol_d(K)}{\delta^{\frac{2}{d+1}}} 
        &=\lim_{\delta\to 0^+} \frac{\Vol_d(((K^\circ)_\delta^{\tilde{\psi}})^\circ) - \Vol_d(K)}{\delta^{\frac{2}{d+1}}} \\
        &= \left(\frac{\Vol_{d-1}(B_2^{d-1})}{a_d}\right)^{\frac{2}{d+1}} \int_{\bd K^\circ} \kappa_o(K^\circ,\by)^{\frac{d+2}{d+1}} \|\by\|^2_2 
            \, C_{K^\circ}(\dint \by) \\
        &= c_d \int_{\bd K} \kappa_o(K,\bx)^{-\frac{1}{d+1}} \frac{1}{(\bx\cdot \bn_K(\bx))^2} \, C_K(\dint \bx)\\
        &= c_d \int_{\bd K} H_{d-1}(K,\bx)^{-\frac{1}{d+1}} \,\mathcal{H}^{d-1}(\dint \bx),
    \end{align*}
    where we used that
    \begin{equation*}
        \frac{\Vol_{d-1}(B_2^{d-1})}{a_d} = \frac{\pi^{\frac{d-1}{2}}}{(2\pi)^{\frac{d-1}{2}}} \frac{\Gamma(\frac{d+1}{2}+1)}{\Gamma(\frac{d+1}{2})} 
        = \frac{d+1}{2^{\frac{d+1}{2}}} = c_d^{\frac{d+1}{2}}.
    \end{equation*}
    This finishes the proof.
\end{proof}

\section{Floating bodies and duality between convex bodies in spaces of constant curvature}

In this section we discuss applications of our previous results to the setting of spherical, hyperbolic and de Sitter space, as well as general real space forms of constant curvature $\lambda\in \R$. We will see that the duality mapping $^*$ (see \eqref{eqn:def_*_sphere} and \eqref{eqn:def_*_hyperbolic}) between convex bodies in spaces of constant curvature is related to the natural duality mapping on closed convex cones in $\R^{d+1}$, or the Lorentz--Minkowski space $\R^{d,1}$. This duality between cones on $\R^{d+1}$, for $d\geq 2$, was characterized by Schneider \cite[Cor.~1]{Schneider:2008} as being the only order-reversing involution up to self-adjoint linear transformations, see also \cite{AAM:2009, AASW:2023, BS:2008}. 
We also refer to \cite[Sec.\ 2]{FS:2019} where non-Euclidean convex bodies and the duality mapping are presented for spherical, hyperbolic and de Sitter space in the model spaces.

\subsection{Convex bodies in spherical space} \label{sec:spherical}

We consider the unit sphere $\S^d := \{\bx\in \R^{d+1}: \|x\|_2=1\}$ as model for the spherical space, with its natural geodesic distance $d_s$ determined by $\cos d_s(\bu,\bv) = \bu\cdot \bv$ for $\bu,\bv\in\S^d$. A spherical convex body $K\subset \S^d$ is a closed subset with non-empty interior such that for any two points $\bu,\bv\in K$ with $d_s(\bu,\bv)<\pi$ the uniquely determined geodesic arc between $\bu$ and $\bv$ is contained in $K$.
Equivalently, $K$ is a spherically convex body if and only if the radial extension $\operatorname{rad} K := \{r\bu : r\geq 0, \bu\in K\}$ is a closed convex cone in $\R^{d+1}$ with non-empty interior. We call $K$ proper, if $K$ is contained in the interior of a half-sphere $H^+(\be) = \{\bu \in\S^d : \bu\cdot \be \geq 0\}$ for some $\be\in\S^d$.

The dual $K^*$ of a spherical convex body $K$ is defined by 
\begin{equation}\label{eqn:def_*_sphere}
    K^* = \left\{\bv \in \S^d: d_s(\bu,\bv)\leq \frac{\pi}{2} \text{ for all $\bu\in K$}\right\} = \bigcap_{\bu \in K} H^+(\bu).
\end{equation}
This duality is related to the usual duality on convex cones by
\begin{equation*}
    \operatorname{rad} K^* = (\operatorname{rad} K)^* = \{\by\in \R^{d+1} : \bx\cdot \by \geq 0 \text{ for all $\bx\in \operatorname{rad} K$}\}.
\end{equation*}

A spherical convex body $K\subset \S^d$ is of class $\cC^{k}_+$, for $k\geq 2$, if $\bd K$ is a $\cC^k$-smooth embedded hypersurface of $\S^d$ and the spherical Gauss--Kronecker curvature $H_{d-1}^s(K,\bu)$ is strictly positive for all $\bu\in\bd K$. Note that if $K$ is of class $\cC^2_+$, then $K$ is strictly convex, i.e., for all boundary points $\bu\in\bd K$ the great subsphere tangent to $\bd K$ at $\bu$ only contains $\bu$. Further note, that if $K\subset \S^d$ is a strictly convex spherical convex body, then $K$ is contained in an open half-sphere.

The gnomonic projection $g_{\be}$ of $\interior H^+(\be)$ to $\R^d$ is the radial projection from the origin $\bo\in\R^{d+1}$ to the tangent plane of $\S^d$ at $\be$, that is, if $\be = \be_{d+1}$, then
\begin{equation*}
    g(\bu) := \left(\frac{u_1}{u_{d+1}},\dotsc,\frac{u_d}{u_{d+1}}\right),
\end{equation*}
where $\bu=(u_1,\dotsc,u_{d+1}) \in \S^d_+ := \interior H^+(\be_{d+1}) =\{\bu\in\S^d : u_{d+1}>0\}$.
The gnomonic projection maps geodesic arcs of $\S^d_+$ to straight lines in $\R^d$ and therefore a spherical convex body $K\subset \S^d_+$ is mapped to a Euclidean convex body $\overline{K}:=g(K)$.
The natural spherical volume $\Vol_d^s$ is mapped by $g$ to the radially symmetric measure $\Vol_d^{\phi_s}$ with density
\begin{equation*}
    \phi_s(\bx) := (1+\|\bx\|^2)^{-(d+1)/2} \qquad \text{for all $\bx\in\R^d$}.
\end{equation*}

If $K\subset \S^+$ is a spherical convex body that contains $\be_{d+1}$ in the interior, then the dual $K^*$ is contained in $\S^+$ and the gnomonic projection of $K^*$ is related to the the polar body of $\overline{K}$ by
\begin{equation}\label{eqn:spherical_polar_projective}
    g(K^*) = -\overline{K}^\circ.
\end{equation}

The spherical floating body $\mathcal{F}_\delta^s K$ can either be defined as an intersection of half-spaces that cut off a spherical volume of at least $\delta$ or as the $\delta$-super-level set of the minimal cap density function. Both definitions are equivalent and connected to the weighted floating body of $\overline{K}$ by 
\begin{equation*}
    g(\mathcal{F}^s_\delta K) = \overline{K}_\delta^{\phi_s}. 
\end{equation*}

We conjugate the spherical floating body with the duality mapping to obtain the operator $\mathcal{F}_\delta^{s,*}$ defined on spherical convex bodies by
\begin{equation*}
    \mathcal{F}_\delta^{s,*} K = (\mathcal{F}_\delta^s K^*)^*.
\end{equation*}
Note that $\mathcal{F}_\delta^{s,*} K$ converges in the Hausdorff metric to $\mathcal{F}_0^{s,*} K = K$ as $\delta\to 0^+$.

\medskip
A geodesic ball $B_{\bu}(\alpha) = \{\bv\in\S^d: d_s(\bu,\bv)\leq \alpha\}$ for $\alpha \in (0,\alpha)$ is a proper spherical convex body of class $\cC^2_+$. The two extremal cases $\alpha=0$ and $\alpha=\frac{\pi}{2}$ are treated in the following
\begin{example}[floating body of a half-sphere / dual floating body of a point]
    For $\bu_1,\bu_2\in\S^d$ the intersection $L(\bu_1,\bu_2) := H^+(\bu_1)\cap H^-(\bu_2)$ is a spherical wedge and
    \begin{equation*}
        \Vol_d^s(L(\bu_1,\bu_2)) = \Vol^s_d(\S^d) \frac{d_s(\bu_1,\bu_2)}{2\pi}.
    \end{equation*}
    Thus, for $\bu\in\S^d$ and $\delta\in[0,\Vol_d^s(\S^d)/4]$, the floating body of $H^+(\bu)$ is 
    \begin{equation*}
        \cF_\delta^s H^+(\bu) = \bigcap\{H^+(\bv) : \bv\in \S^d \text{ such that $\Vol_d^s(\S^d) d_s(\bv,\bu)\leq 2\pi \delta$}\} 
        = B_{\bu}\left(\frac{\pi}{2}-\frac{2\pi\delta}{\Vol_d^s(\S^d)}\right),
    \end{equation*}
    where $B_{\bu}(\alpha)=\{\bv\in\S^d: d_s(\bu,\bv)\leq \alpha\}$ is a geodesic ball of radius $\alpha\in[0,\frac{\pi}{2}]$ with center $\bu$.
    Moreover, for the dual floating body we conclude $\cF_\delta^{s,*} \{\bu\} = B_{\bu}(\pi/2-2\pi\delta/\Vol_d^s(\S^d))^* = B_{\bu}(2\pi\delta/\Vol_d^s(\S^d))$.
    Thus
    \begin{align}\notag
        \Vol_d^s(\cF_{\delta}^s H^+(\bu)) &= \frac{\Vol_d^s(\S^d)}{2} - \Vol_{d-1}^s(\S^{d-1}) \, C_{d}\left(\frac{2\pi \delta}{\Vol_d^s(\S^d)}\right) \\\notag
            &= \frac{\Vol_d^s(\S^d)}{2} - \frac{2\pi \Vol_{d-1}^s(\S^{d-1})}{\Vol_d^s(\S^d)} \delta + o(\delta), \text{ and }\\
        \Vol_d^s(\cF_{\delta}^{s,*} \{\bu\}) &= \Vol_{d-1}^s(\S^{d-1}) \, S_{d}\left(\frac{2\pi\delta}{\Vol_d^s(\S^d)}\right) 
            = \frac{(2\pi)^d\Vol_{d-1}^s(\S^{d-1})}{(\Vol_d^s(\S^d))^d} \delta^{d-1} + o(\delta^{d-1}),\label{ex:sphere_point}
    \end{align}
    for $\delta\to 0^+$, where $C_d(\alpha):= \int_{0}^\alpha (\cos s)^{d-1}\, \dint s$ and $S_d(\alpha):= \int_{0}^\alpha (\sin s)^{d-1}\, \dint s$.
\end{example}

\begin{theorem}[volume derivative of the spherical floating body conjugate by duality]\label{thm:sphere_main}
    Let $K\subset\S^d$ be a spherically convex body of class $\cC^2_+$. Then
    \begin{equation*}
        a_d^{\frac{2}{d+1}} \lim_{\delta\to 0^+} \frac{\Vol_d^s(\mathcal{F}_\delta^{s,*} K)-\Vol_d^s(K)}{\delta^{\frac{2}{d+1}}} 
        %= \int_{\bd K} H_{d-1}^s(K,\bu)^{-\frac{1}{d+1}} \, \Vol_{\bd K}^s(\dint \bu) 
        = \Omega^s_{-d/(d+2)}(K).
    \end{equation*}
\end{theorem}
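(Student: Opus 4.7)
The plan is to transport the spherical problem to a weighted Euclidean one via the gnomonic projection $g$, apply Theorem~\ref{thm:main_weighted}, and then translate the resulting curvature integral back into intrinsic spherical quantities.

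First, I would fix a gnomonic chart $g$ centered at a point $\be$ lying in the interior of both $K$ and $K^*$, so that $\overline{K} := g(K) \subset \R^d$ is a convex body of class $\cC^2_+$ containing the origin in its interior; in particular $\overline{K}^\circ$ is again $\cC^2_+$, hence of class $\cC^{1,1}$, and also contains the origin in its interior. By \eqref{eqn:spherical_polar_projective} we have $g(K^*) = -\overline{K}^\circ$, and because the weight $\phi_s(\bx)=(1+\|\bx\|_2^2)^{-(d+1)/2}$ is invariant under $\bx\mapsto -\bx$ one easily checks that $(-L)_\delta^{\phi_s}=-(L_\delta^{\phi_s})$ for every convex body $L$. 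Combined with $(-L)^\circ=-L^\circ$ this yields the key identity
\begin{equation*}
    g\bigl(\mathcal{F}_\delta^{s,*} K\bigr) \;=\; -\bigl((-\overline{K}^\circ)_\delta^{\phi_s}\bigr)^\circ \;=\; \bigl((\overline{K}^\circ)_\delta^{\phi_s}\bigr)^\circ,
\end{equation*}
and since the pushforward of $\Vol_d^s$ under $g$ is $\Vol_d^{\phi_s}$, the limit on the left-hand side of the theorem equals
\begin{equation*}
    a_d^{2/(d+1)}\lim_{\delta\to 0^+}\frac{\Vol_d^{\phi_s}\bigl(((\overline{K}^\circ)_\delta^{\phi_s})^\circ\bigr)-\Vol_d^{\phi_s}(\overline{K})}{\delta^{2/(d+1)}}.
\end{equation*}

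Next, I would apply Theorem~\ref{thm:main_weighted} to $\overline{K}^\circ$ with $\phi=\psi=\phi_s$ and convert the resulting integral over $\bd\overline{K}^\circ$ into one over $\bd\overline{K}$ using the polarity change of variables in Lemma~\ref{lem:integral_formula}, the involution $\bx^{\circ\circ}=\bx$, and the identity $\kappa_o(\overline{K}^\circ,\bx^\circ)\kappa_o(\overline{K},\bx)=1$ from \eqref{eqn:kappa_0-inv}. After simplification the exponents on $\kappa_o(\overline{K},\bx)$ combine as $-(d+2)/(d+1)+1=-1/(d+1)$, and the integral reduces to
\begin{equation*}
    \int_{\bd\overline{K}}\kappa_o(\overline{K},\bx)^{-1/(d+1)}\,\phi_s(\bx)\,\phi_s(\bx^\circ)^{-2/(d+1)}\,C_{\overline{K}}(\dint\bx).
\end{equation*}

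The final step is to identify this Euclidean integral with $\Omega^s_{-d/(d+2)}(K)$. For this I would invoke the transformation formulas between spherical and Euclidean curvature quantities under the gnomonic projection established in \cite{BW:2016,BW:2018} (where the analogous reduction was carried out for the spherical floating area $\Omega_1^s$): the two $\phi_s$-factors account precisely for the ratio between the spherical surface area element $\Vol_{\bd K}^s(\dint\bu)$ and the cone-volume measure $C_{\overline{K}}(\dint\bx)$, and for the ratio between the spherical Gauss--Kronecker curvature $H_{d-1}^s(K,\bu)$ and the centro-affine curvature $\kappa_o(\overline{K},\bx)$ at the corresponding boundary point $\bu=g^{-1}(\bx)$. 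The main technical task is therefore the bookkeeping of these curvature and measure conversions; once performed, the substitution $\bu=g^{-1}(\bx)$ identifies the displayed Euclidean integral with $\int_{\bd K}H_{d-1}^s(K,\bu)^{-1/(d+1)}\Vol_{\bd K}^s(\dint\bu)=\Omega^s_{-d/(d+2)}(K)$, completing the proof.
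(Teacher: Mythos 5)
Your proof follows the paper's own argument step for step: gnomonically project, use the reflection symmetry of $\phi_s$ to express the conjugated floating body as $\left((\overline{K}^\circ)_\delta^{\phi_s}\right)^\circ$, apply Theorem~\ref{thm:main_weighted} with $\phi=\psi=\phi_s$ to $\overline{K}^\circ$, pull the integral back to $\bd\overline{K}$ via Lemma~\ref{lem:integral_formula} together with \eqref{eqn:kappa_0-inv}, and convert to intrinsic spherical quantities via \eqref{eqn:spherical_gauss_projective} and \eqref{eqn:spherical_surf_projective}. The only difference is that you spell out the minus-sign bookkeeping ($(-L)_\delta^{\phi_s}=-(L_\delta^{\phi_s})$ and $(-L)^\circ=-L^\circ$) that the paper leaves tacit in the first equality of its displayed chain; the rest is identical.
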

\begin{proof}
    Since $K$ is of class $\cC^2_+$, it is strictly convex and therefore contained in an open hemisphere.
    Thus we may assume w.l.o.g.\ that $\be_{d+1}$ is an interior point of $K$ and that $K\subset \S^d_+$.
    The gnomonic projection, Lemma \ref{lem:integral_formula} and Theorem \ref{thm:main_weighted} yield
    \begin{align*}
        &a_d^{\frac{2}{d+1}}\lim_{\delta\to 0^+} \frac{\Vol_d^s(\mathcal{F}_\delta^{s,*} K)-\Vol_d^s(K)}{\delta^{\frac{2}{d+1}}}
        = a_d^{\frac{2}{d+1}}\lim_{\delta\to 0^+} 
            \frac{\Vol_d^{\phi_s}(((\overline{K}^\circ)_\delta^{\phi_s})^\circ)-\Vol_d^{\phi_s}(\overline{K})}{\delta^{\frac{2}{d+1}}}\\
        &\quad =  \int_{\bd \overline{K}^\circ} H_{d-1}(\overline{K}^\circ,\by)^{\frac{d+2}{d+1}} 
            (\by\cdot \bn_{\overline{K}^\circ}(\by))^{-(d+1)} 
                \phi_s(\by^\circ) \phi_s(\by)^{-\frac{2}{d+1}} 
                    \, \Vol_{\bd \overline{K}^\circ}(\dint \by)\\
        &\quad =  \int_{\bd \overline{K}} H_{d-1}(\overline{K},\bx)^{-\frac{1}{d+1}} (\bx \cdot \bn_{\overline{K}}(\bx))^2 
                \phi_s(\bx) \phi_s(\bx^\circ)^{-\frac{2}{d+1}} \, \Vol_{\bd \overline{K}}(\dint \bx)\\
        &\quad =  \int_{\bd \overline{K}} \sqrt{\frac{1+(\bx\cdot \bn_{\overline{K}}(\bx))^2}{1+\|\bx\|_2^2}}  H_{d-1}(\overline{K},x)^{-\frac{1}{d+1}}\,
            \frac{\sqrt{1+(\bx\cdot \bn_{\overline{K}}(\bx))^2}}{(1+\|\bx\|_2^2)^{\frac{d}{2}}} \, 
                \Vol_{\bd \overline{K}}(\dint \bx)\\
        &\quad = \int_{\bd \overline{K}} H_{d-1}^s(\overline{K},\bx)^{-\frac{1}{d+1}} \, \Vol_{\bd \overline{K}}^s(\dint \bx).
    \end{align*}
    In the last equality we used that for $\bx=g(\bu)\in \bd \overline{K}$ we have that 
    \begin{equation}\label{eqn:spherical_gauss_projective}
        H_{d-1}^s(K,\bu) = H_{d-1}^s(\overline{K},\bx) 
        =\left(\frac{1+\|\bx\|_2^2}{1+(\bx\cdot \bn_{\overline{K}}(\bx))^2}\right)^{\frac{d+1}{2}}  H_{d-1}(\overline{K},\bx)
    \end{equation}
    and for any Borel $A\subset \bd K$ we have that
    \begin{equation}\label{eqn:spherical_surf_projective}
        \Vol_{\bd K}^s(A) = \Vol_{\bd \overline{K}}^s(g(A)) = \int_{g(A)} \frac{\sqrt{1+(\bx\cdot \bn_{\overline{K}}(\bx))^2}}{(1+\|\bx\|_2^2)^{\frac{d}{2}}} \, 
                \Vol_{\bd \overline{K}}(\dint \bx),
    \end{equation}
    see \cite[Eq.\ 4.13 and  Eq.\ 4.11]{BW:2016}.
\end{proof}

\begin{example}
    For $\alpha\in(0,\pi/2)$ we have
        \begin{equation*}
            \Omega_{-d/(d+2)}^s(B_{\bu}(\alpha)) = (\cos\alpha)^{-\frac{d-1}{d+1}} (\sin\alpha)^{\frac{(d-1)(d+2)}{d+1}} \Vol_{d-1}^s(\S^{d-1}).
        \end{equation*}
    Note that $\lim_{\alpha\to (\pi/2)^-} \Omega_{-d/(d+2)}^s(B_{\bu}(\alpha)) = +\infty$ and $\lim_{\alpha\to 0^+} \Omega_{-d/(d+2)}^s(B_{\bu}(\alpha)) = 0$, which also follows from \eqref{ex:sphere_point}.
    
\end{example}

\begin{remark}
    We note that $\mathcal{F}_\delta^{s,*} K$ can also be seen as a spherical illumination body $\mathcal{I}^{s,V_{-1}}_\delta K$, see \cite{Werner:1994}, or separation body, see \cite{Schneider:2020}, with respect to the dual volume deviation $\Delta_{-1}(K,L) = V_{-1}(K)+V_{-1}(L) - 2V_{-1}(K\cap L)$ where $V_{-1}(K) = \Vol_d^s(K^*)$. To see this we set
    \begin{equation*}
        [K,{\bv}] = \mathrm{conv}(K\cup \{\bv\}).
    \end{equation*}
    Then
    \begin{equation*}
        K^*\cap H^+(\bv) = K^*\setminus [K,{\bv}]^*,
    \end{equation*}
    which yields
    \begin{align*}
        \mathcal{F}_\delta^{s,*} K 
        = \{\bv : \Vol_{d}^s(K^*\cap H^+(\bv)) \leq \delta\}
        = \{\bv : \Delta_{-1}(K,[K,\bv]) \leq \delta\} = \mathcal{I}^{s,V_{-1}}_\delta K.
    \end{align*}
\end{remark}

\subsection{Convex bodies in hyperbolic and de Sitter space} \label{sec:hyperbolic}

We consider the hyperboloid model of hyperbolic space $\H^d:=\{\bu\in\R^{d+1}_1 : \bu\circ \bu = -1, u_{d+1}>0\}$ as the upper part of the elliptic hyperboloid in the Lorentz--Minkowski space $\R^{d,1} =(\R^{d+1},\circ)$ with the indefinite product $\circ$  defined by $\bu\circ \bu = (u_1^2 + \cdots + u_d^2) - u_{d+1}^2$. The hyperbolic distance $d_h$ between two points $\bu,\bv\in \H^d$ is determined by $\cosh d_h(\bu,\bv) =  -\bu \circ \bv$.

\medskip
In the Lorentz--Minkowski space we distinguish between vectors $\bv \in \R^{d,1}$ that are
\begin{enumerate}
    \item[a)] \emph{space-like}, if $\bv\circ \bv >0$,
    \item[b)] \emph{light-like}, if $\bv\circ \bv =0$, and
    \item[c)] \emph{time-like}, if $\bv\circ \bv < 0$. A time-like vector $\bv=(v_1,\dotsc,v_{d+1})\in\R^{d,1}$ is called \emph{future-directed} if $v_{d+1} = -\bv\circ \be_{d+1} >0$.
\end{enumerate}
Similarly, a linear subspace $L\subset \R^{d,1}$ is called
\begin{enumerate}
    \item[a)] \emph{space-like}, if all vectors in $L$ are space-like,
    \item[b)] \emph{light-like}, if $L$ contains no time-like vectors and at least one light-like vector, and
    \item[c)] \emph{time-like}, if $L$ contains at least one time-like vector.
\end{enumerate}
The closure of all future-, respectively past-, directed vectors is the positive light-cone $L^d_+ = \{ \bv \in \R^{d,1} : \bv \circ \bv \leq 0 \text{ and } v_{d+1}\geq 0\}$, respectively the negative light-cone $L^d_-=-L^d_+$. $L_+^d$ and $L_-^+$ are closed convex cones in $\R^{d,1}$. The light double-cone $L^d=L^d_+\cup L^d_-$ is the set of all light-like and time-like vectors in $\R^{d,1}$.

\medskip
A hyperbolic convex body $K\subset \H^d$ is a compact subset with non-empty interior and such that for any two point $\bu,\bv\in K$ the geodesic arc connecting them is contained in $K$. Equivalently, $K$ is hyperbolic convex body if and only if the radial extension $\operatorname{rad} K$ is a closed convex cone in $\R^{d,1}$ of future-directed vectors, that is, $\operatorname{rad} K$ is contained in the interior of $L^d_+$.

\medskip
The projective dual of $\H^d$, the space of all oriented hyperplanes in $\H^d$, can be identified with the de Sitter space, which has the hyperboloid model $\dS^d_1:=\{\bu\in\R^{d,1} : \bu\circ \bu = 1\}$. Indeed, any vector $\bv\in\dS^d_1$ determines an oriented hyperplane $H(\bv):=\{\bu\in \H^d : \bu\circ \bv=0\}$, i.e.\ a totally geodesic co-dimension $1$ submanifold of $\H^d$, and closed half-spaces by $H^+(\bv):=\{\bu\in\H^d : \bu\circ \bv \leq 0\}$ and $H^-(\bv) := H^+(-\bv)$.
Conversely, any point $\bu \in \H^d$ determines an oriented hyperplane $H(\bu):=\{\bv\in \dS^d_1: \bu\circ\bv=0\}$ and closed half-spaces $H^+(\bu) := \{\bv \in \dS^d_1: \bu\circ \bv \geq 0\}$ and $H^-(\bu) := H^+(-\bu)$.

Note that for $\bu\in \H^d$ the hyperplane $H(\bu)\subset \dS^d_1$ contains only space-like vectors and $H^+(\bu) = \tilde{H}^+(\bu) \cap \dS^d_1$ where $\tilde{H}^+(\bu) := \{\bv \in\R^{d,1} : \bv\circ \bu \leq 0\}\supset L^d_+$. Thus $H^+(\bu)\subset\dS^d_1$ is a \emph{future-directed} closed half-space with compact space-like boundary.

\medskip
Unless empty, the intersection $K\subset \dS_1^d$ of a arbitrary family of closed space-like half-spaces in $\dS_1^d$ is called a \emph{de Sitter convex set} and if $K$ has a compact boundary, then $K$ is called a $\emph{de Sitter convex body}$.
We call $K\subset \dS_1^d$ future-, respectively past-, directed if $K$ can be written as an exclusive intersection of half-spaces $H^+(\bu)$, respectively half-spaces $H^-(\bu)$, for $\bu\in \H^d$. 

We call a de Sitter convex body $K\subset \dS_1^d$ \emph{proper}, if it does not contain a pair of antipodal points. Note that $K$ is proper, if and only if it is contained in an open de Sitter half-space.

\begin{example} Some examples of de Sitter convex bodies are the following:
    \begin{enumerate}
        \item A closed half-space $H^+(\bu)$, for $\bu\in \H^d$, is a future-directed de Sitter convex body. It is not proper.
        
        \item \emph{Space-like subspaces:} hyperplanes $H(\bu)$, for $\bu\in\H^d$, as well as any non-empty intersection of a collection of hyperplanes, are de Sitter convex bodies that are neither future- nor past-directed. In particular, the set of antipodal points $\{\pm \bv\}$ for $\bv\in \dS_1^d$ is a de Sitter convex body. These sets are exactly the intersection of $\dS^d_1$ with space-like linear subspaces in $\R^{d,1}$, that is, compact great-spheres of $\dS^d_1$ that are congruent to $\S^{k-1} = \{(v_1,\dotsc,v_k,0,\dotsc,0) : \sum_{i=1}^k v_i^2 = 1\} \subset \R^{d,1}$ if the dimension of the linear subspace is $k$. 
        
        \item Given a point $\bu \in \H^d$ and a point $\bv\in\dS^d_1$ the signed geodesic distance $d_h(\bu,\bv)$ between the oriented hyperplane $H(\bu)$ and $\bv$ is given by
        \begin{equation*}
            \sinh d_h(\bu,\bv) = -\bu\circ \bv.
        \end{equation*}
        Then for $\alpha\in \R$ and $\bu\in\H^d$, the \emph{de Sitter balls} $C(\bu, \alpha) = \{\bv \in \S^d_1: d_h(\bu,\bv) \geq \alpha\}$ are proper future-directed de Sitter convex bodies for $\alpha >0$. For $\alpha=0$ we have $C(\bu,0) = H^+(\bu)$ and for $\alpha<0$ we have that $C(-\bu,-\alpha)=\dS^d_1\setminus C(\bu,\alpha)$ is a proper past-directed de Sitter convex body.
        
        \item Random $\beta^*$-polytopes introduced by Godland, Kabluchko and Thäle \cite{GKT:2022} are examples of polyhedral proper future-directed de Sitter convex bodies.
    \end{enumerate}
\end{example}

The hyperbolic dual $K^*\subset\dS^d_1$ of a hyperbolic convex body $K$, respectively the dual $L^*\subset \H^d$ of a de Sitter convex body $L$, is defined by
\begin{align}
    K^* &= \bigcap \{H^+(\bu) : \bu \in K\} = \{\bv \in \dS^d_1: \bu\circ \bv \leq 0 \text{ for all $\bu\in K$}\}, \notag\\
    L^* &=  \bigcap \{H^+(\bv) : \bv \in L\} = \{\bu \in \H^d: \bu\circ \bv \leq 0 \text{ for all $\bv\in L$}\}. \label{eqn:def_*_hyperbolic}
\end{align}
Similar to the spherical setting, this duality is related to the duality of the convex cones in $\R^{d,1}$ with respect to the inner product $\circ$, i.e., if $C\subset \R^{d,1}$ is a closed convex cone that is contained in $\interior L^d_+$, then for $K=C\cap \H^d$ we have that $K^* = C^*\cap \dS^d_1$, where
\begin{equation*}
    C^* = \{\bv \in \R^{d,1}: \bu\circ \bv \leq 0 \text{ for all $\bu\in C$}\}.
\end{equation*}
Similarly, if $C$ is a closed convex cone such that $L^d_+\subset \interior C$, then for $L=C\cap \dS^d_1$ we have that $L^* = C^*\cap \H^d$.

\begin{example}[hyperbolic balls and de Sitter balls]
    The hyperbolic dual of a closed hyperbolic ball $B_{\be}(\alpha) = \{\bu\in\H^d: d_h(\bu,\be)\leq \alpha\}$, with center $\be\in\H^d$ and radius $\alpha>0$, is the de Sitter ball $C(\be,\alpha) = \{\bv\in\dS_1^d : d_h(\be,\bv)\geq \alpha\}$. Note that the boundary of $B_{\be}(\alpha)$ is a round sphere (compact totally umbilical hypersurface) with geodesic curvature $\lambda = \coth \alpha > 1$ and the boundary of $C(\be,\alpha)$ is a Riemannian round sphere with geodesic curvature $\lambda^{-1} = \tanh \alpha <1$.
    
    The ``limit'' of geodesic balls $B_{\be}(\alpha)$ when the center $\be$ and radius $\alpha$ move to infinity is a \emph{horoball} $B_{\infty}$ which has an unbounded boundary that is a totally umbilical hypersurface of geodesic curvature $\lambda =1$. The hyperbolic dual of a horoball $B_{\infty}$ is a de Sitter horoball $C_{\infty}=B_{\infty}^*$. $C_\infty$ is a future-directed proper de Sitter convex set and has an unbounded space-like boundary that is a totally umbilical hypersurface of $\dS^d_1$ with geodesic curvature $\lambda^{-1}=1$. Note that we do not consider $B_\infty$ and $C_\infty$ as convex bodies since their boundary is not compact.
\end{example}

\begin{lemma}[Properties of hyperbolic duality on convex bodies]\label{lem:prop_deSitter}
    Let $K\subset \H^d$ be a hyperbolic convex body and $L\subset \dS^d_1$ a proper future-directed de Sitter convex body. Then for the hyperbolic dual body $K^*$, respectively $L^*$, the following holds:
    \begin{enumerate}
     \item[i)] 
     $K^*$ is a proper future-directed de Sitter convex body and $L^*$ is a hyperbolic convex body. In particular, $\be_{d+1}\in \H^d$ is an interior point of $K$, if and only if $K^*$ is contained in the open de Sitter half space $\dS_1^+:=\interior H^+(\be_{d+1})=\{\bu\in\dS^d_1 : u_{d+1} >0\}$. Conversely, $L$ is contained in $\interior H^+(\bu)$, if and only if $\bu\in\H^d$ is an interior point of $L^*$.
     
     \item[ii)] The boundary $\bd K^*\subset \dS_{1}^d$ is space-like and any outer unit normal is negative time directed. Thus the induced metric on the tangent hyperplane at any boundary point of $K^*$ is Riemannian.
     
     \item[iii)] For hyperbolic convex bodies $K_1\subset K_2\subset \H^d$ we have that $K_2^*\subset K_1^*\subset \dS^d_1$ and $K_1^*\setminus \interior K_2^*$ is compact.
     
     \item[iv)] We have $(K^*)^*=K$ and $(L^*)^*=L$.
    \end{enumerate}
\end{lemma}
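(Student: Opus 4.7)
My plan is to reduce the whole lemma to the $\circ$-duality on closed convex cones in the Lorentz--Minkowski space $\R^{d,1}$. For a hyperbolic convex body $K\subset\H^d$ the radial extension $\rad K=\{r\bu : r\geq 0,\ \bu\in K\}$ is a full-dimensional closed convex cone contained in $\interior L^d_+\cup\{\bo\}$ (because $K$ is hyperbolically compact), while for a proper future-directed de Sitter convex body $L$ the radial extension $\rad L$ is a closed convex cone containing $L^d_+$ in its interior. Writing $C^*=\{\bv\in\R^{d,1} : \bu\circ\bv\leq 0\text{ for all }\bu\in C\}$ for the $\circ$-dual cone, a direct check of the definitions gives
\[
    K^* = (\rad K)^*\cap\dS^d_1,\qquad L^* = (\rad L)^*\cap\H^d.
\]
Part (iv) is then the bipolar theorem for closed convex cones, intersected with $\H^d$ and $\dS^d_1$ respectively.

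For part (i), the general correspondence between pointed full-dimensional closed convex cones sitting in $\interior L^d_+\cup\{\bo\}$ and their $\circ$-duals, which are closed convex cones containing $L^d_+$ in their interior, immediately yields that $K^*$ is a proper future-directed de Sitter convex body and that $L^*$ is a hyperbolic convex body. For the criterion $\be_{d+1}\in\interior K\Leftrightarrow K^*\subset\dS_1^+$, I invoke the support characterization of the cone interior: $\be_{d+1}\in\interior(\rad K)$ iff $\be_{d+1}\circ\bv<0$ for every nonzero $\bv\in(\rad K)^*$. A nonzero $\bv\in(\rad K)^*$ is either in $L^d_+\setminus\{\bo\}$, so $-v_{d+1}<0$ automatically, or space-like and thus a positive scalar multiple of an element of $K^*$. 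Hence the condition is equivalent to $v_{d+1}>0$ for all $\bv\in K^*$, i.e.\ $K^*\subset\dS_1^+$. The converse and the analogous criterion for $L^*$ follow via $K=(K^*)^*$.

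For part (ii), given $\bv\in\bd K^*$, cone duality provides $\bu\in\rad K\setminus\{\bo\}$ with $\bu\circ\bv=0$. Since $\bu\in\rad K\subset\interior L^d_+$ is future-directed and time-like, and $\bu\in\bv^\perp=T_\bv\dS^d_1$, the tangent hyperplane $T_\bv\bd K^*=\bv^\perp\cap\bu^\perp$ is the $\circ$-complement of a time-like vector inside the Lorentzian hyperplane $T_\bv\dS_1^d$, hence space-like. Moreover, for $t>0$,
\[
    \bu\circ(\bv+t\bu)=t\,\bu\circ\bu<0,
\]
so $+\bu$ points into $K^*$, and the outer unit normal in $T_\bv\dS^d_1$ is therefore a negative multiple of $\bu$, which is past-directed.

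For part (iii), the inclusion $K_2^*\subset K_1^*$ is immediate from the defining intersection. To prove that $K_1^*\setminus\interior K_2^*$ is compact, I fix any $\bu_0\in\interior K_1\subset\interior K_2$ and choose $0<r\leq R$ with
\[
    B_{\bu_0}(r)\subset K_1\subset K_2\subset B_{\bu_0}(R),
\]
which is possible since $K_1$ has non-empty interior and $K_2$ is hyperbolically compact. Dualising reverses inclusions and, using $B_{\bu_0}(\rho)^*=C(\bu_0,\rho)$, gives
\[
    C(\bu_0,R)\subset K_2^*\subset K_1^*\subset C(\bu_0,r).
\]
Consequently $K_1^*\setminus\interior K_2^*$ is contained in the compact de Sitter annulus $\{\bv\in\dS_1^d:\sinh r\leq -\bu_0\circ\bv\leq\sinh R\}$, which is bounded in $\R^{d,1}$ by the constraint $\bv\circ\bv=1$. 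The main obstacle I foresee is this compactness step in (iii); once the inscribed/circumscribed de Sitter ball sandwich is in place, the rest of the lemma is bookkeeping that translates hyperbolic/de Sitter duality into the $\circ$-duality of cones in $\R^{d,1}$.
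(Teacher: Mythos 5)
Your proof is correct and follows the same cone-duality framework as the paper's, and in part~(i) you are actually more thorough than the paper, which records only the forward direction of $\be_{d+1}\in\interior K \Leftrightarrow K^*\subset\dS_1^+$. One slip in the preamble: for a (necessarily unbounded) proper future-directed de Sitter body $L$, the radial extension $\rad L = \{r\bv : r\geq 0,\ \bv\in L\}$ is \emph{not} a closed convex cone containing $L^d_+$ in its interior. Apart from the origin, $\rad L$ consists only of space-like vectors, so it misses all of $L^d_+\setminus\{\bo\}$; it is not even convex, since the midpoint of two far-apart points $\bv_1,\bv_2\in L$ with $\bv_1\circ\bv_2<-1$ is time-like and hence not in $\rad L$. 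The cone you actually want is $C := \rad L\cup L^d_+$, which is the closed convex cone hull of $\rad L$ (equivalently $\bigcap_{\bu\in S}\{\bw\in\R^{d,1}:\bu\circ\bw\leq 0\}$ when $L=\bigcap_{\bu\in S}H^+(\bu)$). Since $(\rad L)^* = C^*$ and $(\rad L)^{**} = C$, your identity $L^* = (\rad L)^*\cap\H^d$ and the bipolar step in (iv) remain valid, so the error does not propagate. Your argument for (iii), sandwiching $B_{\bu_0}(r)\subset K_1\subset K_2\subset B_{\bu_0}(R)$ and dualising to a compact de Sitter annulus, is a clean variant of the paper's, which in effect takes $r=0$: a single interior point $\be_{d+1}$ already yields $K_1^*\subset\dS_1^+=C(\be_{d+1},0)$, and the set $\{\bv\in\dS_1^d : 0\leq -\be_{d+1}\circ\bv\leq\sinh\alpha\}$ is still compact.
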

\begin{proof}
    First, i) follows since if $\bu\in \H^d$ is an interior point of $K$, then for all $\bv\in K^*$ we have that $\bu\circ \bv < 0$ and therefore $K^*\subset \interior H^+(\bu)$. In particular, if $\be_{d+1}\in\interior K$, then $K^*\subset \interior H^+(\be_{d+1})= \dS^{+}_1$. The statement for $L$ follows analogously.
    
    Statement ii) follows from the observation that $\bu\in \bd K\subset \H^d$, if and only if $H(\bu)$ is tangent to the cone $(\operatorname{rad} K)^*$ and $\circ$ restricted to $H(\bu)$ is positive definite since $\bu$ is future-directed.
    
    The first statement of iii) follows easily from the definition. To prove the second statement of iii) we assume w.l.o.g.\ that $\be_{d+1}$ is an interior point of $K_1$. Then $K_2^*$ as well as $K_1^*$ are contained in $\dS_1^+ = \interior C(\be_{d+1},0)$. 
    Since $K_2$ is compact, there is $\alpha>0$ such that $K_2$ is contained in the interior of the geodesic ball $B_{\be_{d+1}}(\alpha) = \{\bu \in \H^d: d_h(\be_{d+1},\bu) \leq \alpha\}$ and $B_{\be_{d+1}}(\alpha)^* = C(\be_{d+1},\alpha)$. Thus
    \begin{equation*}
        C(\be_{d+1},0) \supset K_1^*\supset \interior K_2^* \supset \interior C(\be_{d+1},\alpha),
    \end{equation*}
    which yields
    \begin{equation*}
        K_1^*\setminus \interior K_2^* \subset C(\be_{d+1},0)\setminus \interior C(\be_{d+1},\alpha) = \{\bv \in \dS_1^d: 0 \leq d_h(\be_{d+1},\bv) \leq \alpha\}.
    \end{equation*}
    Since $\{\bv \in \dS_1^d: 0 \leq d_h(\be_{d+1},\bv) \leq \alpha\}$ is compact, this shows that $K_1^*\setminus K_2^*$ is compact.

    Finally, for iv) let first $\bu\in K$. Then for all $\bv\in K^*$ we have that $\bu\circ \bv\leq 0$, which yields $\bu\in (K^*)^*$. Thus $K\subseteq (K^*)^*$. 
    
    Conversely, if $\bu\not\in K$, then there exist $\bv\in \dS^d_1$ such that $H(\bv)$ strictly separates $K$ and $\bu$ and we may assume w.l.o.g.\ that $\bu\circ \bv > 0 \geq  \bw \circ \bv$ for all $\bw\in K$. Thus $\bv\in K^*$, and $\bu\circ \bv >0$, which yields $\bu\not\in (K^*)^*$. Similarly it follows that $(L^*)^*=L$.
\end{proof}

\begin{figure}
    \centering
    \begin{tikzpicture}[scale=2.3]
        \draw[->,gray] (-1.5,0) -- (1.5,0) node[below] {$\R^d$};
        \draw[->,gray] (0,-0.5) -- (0,1.5);
        \draw[dotted,gray] (-0.,-0.) -- (2.3,2.3);
        \draw[dotted,gray] (0.,-0.) -- (-2.3,2.3);
        
        \def\alp{-0.7}
        \def\beta{0.9}
        
        \draw[domain=-1.5:1.5, variable=\x, smooth] plot ({sinh(\x)}, {cosh(\x)});
        \draw[thick, red, domain=\alp:\beta, variable=\x, smooth] plot ({sinh(\x)}, {cosh(\x)});
        \node[above, red] at ({sinh(0.3)}, {cosh(0.3)}) {$K$};

        \draw[domain=-0.5:1.5, variable=\x, smooth] plot ({cosh(\x)}, {sinh(\x)});
        \draw[thick, blue, domain=\beta:1.5, variable=\x, smooth] plot ({cosh(\x)}, {sinh(\x)});
        \node[right, blue] at ({cosh(1)},{sinh(1)}) {$K^*$};

        \draw[domain=-0.5:1.5, variable=\x, smooth] plot ({-cosh(\x)}, {sinh(\x)});
        \draw[thick, blue, domain=-\alp:1.5, variable=\x, smooth] plot ({-cosh(\x)}, {sinh(\x)});
        
        \draw[gray] (0,0) -- ({-cosh(\alp)},{-sinh(\alp)}) node[midway, below, yshift=-0.1cm] {$H(\bu)$};
        \draw[gray] (0,0) -- ({cosh(\beta)},{sinh(\beta)});

        \draw[gray] (0,0) -- ({sinh(\alp)},{cosh(\alp)}) node[midway,left, yshift=0.2cm] {$H(\bv)$};
        \draw[gray] (0,0) -- ({sinh(\beta)},{cosh(\beta)});
        
        \node[above] at ({sinh(1)},{cosh(1)}) {$\H^d$};
        \node[right] at ({cosh(-0.5)},{sinh(-0.5)}) {$\dS^d_1$};
        \node[right] at (1.5,1.5) {$L^d_+$};

        \draw[red,->] ({-cosh(\alp)},{-sinh(\alp)}) --++ ({0.5*sinh(\alp)},{0.5*cosh(\alp)}) 
            node[right] {$\bu$};
        \draw[blue,->] ({sinh(\alp)},{cosh(\alp)}) --++ ({-0.5*cosh(\alp)},{-0.5*sinh(\alp)}) 
            node[below] {$\bv$};

        \fill[red] ({sinh(\alp)},{cosh(\alp)}) circle(0.025) node[above] {$\bu$};
        \fill[blue] ({-cosh(\alp)},{-sinh(\alp)}) circle(0.025) node[left] {$\bv$};
        
        \fill[red] ({sinh(\beta)},{cosh(\beta)}) circle(0.025);
        \fill[blue] ({cosh(\beta)},{sinh(\beta)}) circle(0.025);
        
        \fill[red] (0,1) circle (0.025) node[below right] {$\be_{d+1}$};
    \end{tikzpicture}
    \caption{The hyperbolic dual $K^*$ of a hyperbolic convex body $K\subset \H^d$ is a proper future-directed de Sitter convex body in $\dS^d_1$. Both, $\H^d$ and $\dS^d_1$, can be expressed as projective dual hypersurfaces in the Lorentz--Minkowski space $\R^{d,1}$. In this case, the hyperbolic duality reduces to the duality on convex cones that are either contained in the interior of the positive light-cone $L^d_+$ or contain $L^d_+$ in the interior.}
\end{figure}
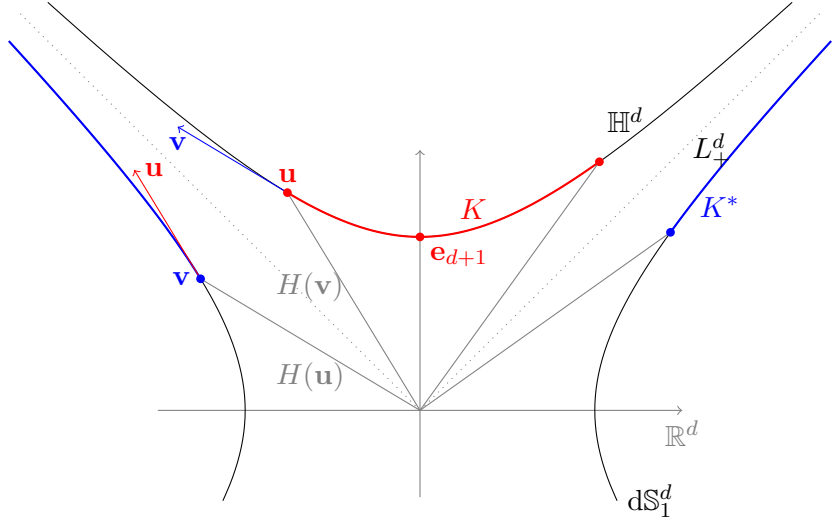

The product $\circ$ induces on the hyperboloid $\H^d$, respectively on $\dS^d_1$, a (pseudo-)Riemannian metric that turns $\H^d$ into a Riemannian, respectively $\dS^d_1$ into a Lorentz, manifold. We denote the natural (pseudo-)Riemannian volume measure on $\H^d$ and $\dS^d_1$ by $\Vol_d^h$ and note that it is invariant with respect to the Lorentz group $O(d,1)$. We also remark that topologically $\dS^d_1$ is homeomorphic to $\S^{d-1}\times \R$ and therefore simply connected for $d\geq 3$.

The gnomonic projection with respect to $\be_{d+1}$ is given by $g(\bu) = (\frac{u_1}{u_{d+1}},\dotsc,\frac{u_d}{u_{d+1}})$. It maps a point in $\H^d$ to the open Euclidean unit ball $\interior B_2^d$ (giving rise to the classical projective model of $\H^d$) and the open de Sitter half space $\dS^+_1 := \interior H^+(\be_{d+1}) = \{\bv \in \dS^d_1: v_{d+1}>0\}$ is mapped to $\R^d\setminus B_2^d$.
The inverse of $g$ is given by
\begin{equation*}
    g^{-1} (\bx) = \begin{cases}
                    \frac{(x_1,\dotsc,x_d,1)}{\sqrt{1-\|\bx\|_2^2}}\in \H^d & \text{if $\|\bx\|_2<1$},\\
                    \frac{(x_1,\dotsc,x_d,1)}{\sqrt{\|\bx\|_2^2-1}}\in \dS_{1}^+ & \text{if $\|\bx\|_2>1$}.
                 \end{cases}
\end{equation*}

Geodesic arcs in $\H^d$ and $\dS_1^+$ are mapped to affine line segments and hyperbolic convex bodies in $\H^d$, respectively future-directed de Sitter convex bodies in $\dS_1^+$, are mapped to Euclidean convex bodies that are contained in the open Euclidean unit ball, respectively to Euclidean convex bodies that contain the Euclidean unit ball in the interior. If $\be_{d+1}$ is an interior point of a hyperbolic convex body $K\subset \H^d$, then the dual body $K^*\subset \dS^+_1$ is mapped to
\begin{equation}\label{eqn:hyperbolic_polar_projective}
    g(K^*) = \left\{\by \in \R^d : (\by,1)\circ (\bx,1) \leq 0 \text{ for all $\bx \in \overline{K}$}\right\} = \overline{K}^\circ.
\end{equation}
Also, since $\overline{K}$ is contained in the interior of $B_2^d$, $\overline{K}^\circ$ contains $B_2^d$ in the interior.
Conversely, if $L\subset \dS^+_1$ is a future-directed de Sitter convex body, then 
\begin{equation*}
    g(L^*) = \overline{L}^\circ.
\end{equation*}

The volume $\Vol_d^h$ on $\H^d$ or $\dS_1^+$ is mapped by the gnomonic projection to the radially symmetric measure $\Vol_d^{\phi_h}$ with density
\begin{equation*}
    \phi_h(\bx) := \left|1-\|\bx\|^2\right|^{-(d+1)/2} \quad \text{for $\|\bx\|_2\neq 1$,}
\end{equation*}
see \cite[Eq.~3.7]{BW:2018} for the hyperbolic case and \cite[Prop.\ 3.17]{GKT:2022} for the de Sitter case.

The hyperbolic floating body $\mathcal{F}_\delta^h$ was introduced in \cite{BW:2018}.
For a hyperbolic convex body $K\subset \H^d$ or a future-directed de Sitter convex body $K\subset \dS^d_1$, the hyperbolic floating body $\mathcal{F}_\delta^h K$ can either be defined as an intersection of half-spaces that cut off a volume of at least $\delta$ or as the $\delta$-super-level set of the minimal cap density function. As in the Euclidean and spherical case, both definitions are also equivalent in the hyperbolic setting to the to the weighted floating body of $\overline{K}=g(K)$, that is,
\begin{equation*}
    g(\mathcal{F}^h_\delta K) = \overline{K}^{\phi_h}_\delta.
\end{equation*}

\begin{example}
    For $\bu_1,\bu_2\in \H^d$, $\bu_1\neq \bu_2$, the intersection $L(\bu_1,\bu_2) = H^+(\bu_1)\cap H^-(\bu_2)$ is a bounded and de Sitter convex body that is neither future- nor past-directed. The hyperbolic volume of $L(\bu_1,\bu_2)$ can be calculated using the gnomonic projecton in $\bu_1$ by
        \begin{align*}
            \Vol_d^h(L(\bu_1,\bu_2)) 
            &= \Vol_d^{\phi_h}( \{ (\bx,t)\in \R^{d-1}\times \R : t\geq (\tanh d_h(\bu_1,\bu_2))^{-1}\} )\\
                &= \int_{\R^{d-1}} \int_{(\tanh d_h(\bu_1,\bu_2))^{-1}}^{\infty} \frac{1}{(\|\bx\|^2+ t^2 -1)^{\frac{d+1}{2}}} \,\dint t\, \dint \bx\\
                &= \int_0^{\tanh d_h(\bu_1,\bu_2)} \frac{1}{1-t^2} \int_{\R^{d-1}} 
                    \frac{1}{(1+\|\bx\|^2)^{\frac{d+1}{2}}}\, \dint \bx \,\dint t\\
                &= \Vol_d^s(\S^d)\frac{d_h(\bu_1,\bu_2)}{2\pi}
        \end{align*}
        This yields in particular, that for $\bu\in \H^d$ and $\delta\geq 0$, the floating body of $H^+(\bu)$ is 
        \begin{align*}
            \mathcal{F}_\delta^h H^+(\bu) 
                &= \bigcap \{H^+(\bv): \bv \in \H^d \text{ such that } \Vol_d^s(\S^d) d_h(\bv,\bu) \leq 2\pi\delta\}
                = C\left(\bu,\frac{2\pi\delta}{\Vol_d^s(\S^d)}\right),
                %= g_{\bu}^{-1} \left( \left[\left(\tanh \frac{\delta}{\kappa_{d-1}}\right) B_2^d\right]^\circ\right).
        \end{align*}
        and the dual floating body $\mathcal{F}^{h,*}_\delta  \{\bu\}$ is a geodesic ball in $\H^d$ centered in $\bu$ with radius $2\pi\delta/\Vol_d^s(\S^d)$.
        Thus
        \begin{align*}
            \Vol_d^h(H^+(\bu)\setminus \cF_\delta^h H^+(\bu)) = \Vol_{d-1}^s(\S^{d-1})\, \tilde{C}_d\left(\frac{2\pi\delta}{\Vol_d^s(\S^d)}\right)
            = \frac{2\pi\Vol_{d-1}^s(\S^{d-1})}{\Vol_d^s(\S^d)} \delta + o(\delta)\\
            \Vol_d^h(\cF_\delta^h \{\bu\}) = \Vol_{d-1}^s(\S^{d-1})\, \tilde{S}_d\left(\frac{2\pi\delta}{\Vol_d^s(\S^d)}\right)
            = \frac{(2\pi)^d\Vol_{d-1}^s(\S^{d-1})}{\Vol_d^s(\S^d)^d} \delta^{d-1} + o(\delta^{d-1}),
        \end{align*}
        where $\tilde{C}_d(t) = \int_{0}^t (\cosh t)^{d-1}\, \dint t$ and $\tilde{S}_d(t)=\int_{0}^t (\sinh t)^{d-1}\, \dint t$.
\end{example}

In our previous work \cite{BW:2018}, we already observed, that using the gnomonic projection we can relate the intrinsic surface area measure on $\bd K$ and the generalized Gauss--Kronecker curvature with the $\phi_h$-weighted surface area measure and generalized Gauss--Kronecker curvature of the Euclidean convex body $\overline{K}=g(K)$.

\begin{theorem}\label{thm:local_hyperbolic}
    Let $K\subset \H^{d}$ be hyperbolic convex body that contains $\be_{d+1}$ in the interior, or let $K\subset \dS_1^+$ be future-directed de Sitter convex body. Then $\overline{K}:=g(K)$ is a Euclidean convex body and for any Borel $A\subset \bd K$, $\overline{A}:=g(A)\subset$, we have
    \begin{equation}\label{eqn:hyperbolic_gauss_projective}
        \Vol_{\bd K}^h(A) = \Vol_{\bd \overline{K}}^h(\overline{A}) = \int_{\overline{A}} \frac{\sqrt{\left|1-(\bx\cdot n_{\overline{K}}(\bx))^2\right|}}{|1-\|\bx\|^2|^{\frac{d}{2}}}\, \Vol_{\bd \overline{K}}(\dint \bx),
    \end{equation}
    and
    \begin{equation}\label{eqn:hyperbolic_surf_projective}
        H_{d-1}^h(K,\bu) = H_{d-1}^h(\overline{K},\bx) = H_{d-1}^e(\overline{K},\bx) \left|\frac{1-\|\bx\|^2}{1-(\bx\cdot \bn_{\overline{K}}(\bx))^2}\right|^{\frac{d+1}{2}},
    \end{equation}
    for almost all $\bu\in\bd K$, where $\bx=g(\bu)\in\bd \overline{K}$.
\end{theorem}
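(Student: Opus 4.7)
The plan is a two-step reduction. First, I parametrize $\bd K$ as the gnomonic lift of a smooth local parametrization of $\bd\overline K$ and compute the pull-back of the (pseudo-)Riemannian metric; this yields \eqref{eqn:hyperbolic_gauss_projective} directly. Second, I observe that in the projective model the hyperbolic/de Sitter Gauss map is exactly Euclidean polarity, and then combine \eqref{eqn:hyperbolic_gauss_projective} applied to both $\bd K$ and $\bd K^*$ with the change-of-variable formula from Lemma \ref{lem:integral_formula} to obtain \eqref{eqn:hyperbolic_surf_projective}.

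For the first step, let $\psi:U\subset\R^{d-1}\to \bd\overline K$ be a smooth local chart near $\bx=\psi(0)$ and set $\tilde\psi := g^{-1}\circ\psi = \lambda(\psi,1)$ with $\lambda(\by):=|1-\|\by\|_2^2|^{-1/2}$. Writing $\epsilon := \mathrm{sgn}(1-\|\psi\|_2^2)\in\{\pm 1\}$, one has $\partial_i\lambda = \epsilon\lambda^3(\psi\cdot\partial_i\psi)$ and $\|\psi\|_2^2-1=-\epsilon/\lambda^2$, and a direct computation gives
\begin{equation*}
    \partial_i\tilde\psi \circ \partial_j\tilde\psi = \lambda^2\,(\partial_i\psi\cdot\partial_j\psi) + \epsilon\lambda^4\,(\psi\cdot\partial_i\psi)(\psi\cdot\partial_j\psi).
\end{equation*}
Evaluating in a Euclidean orthonormal basis of $T_\bx\bd\overline K$ and decomposing $\bx=\bx_T+(\bx\cdot\bn)\bn$ with $\bn:=\bn_{\overline K}(\bx)$, the Gram matrix of the pull-back metric becomes the rank-one perturbation $\lambda^2 I_{d-1} + \epsilon\lambda^4\,\bx_T\bx_T^\top$, whose determinant simplifies, using $\|\bx_T\|_2^2 = \|\bx\|_2^2-(\bx\cdot\bn)^2$, to $|1-(\bx\cdot\bn)^2|/|1-\|\bx\|_2^2|^d$. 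Taking square roots yields the density in \eqref{eqn:hyperbolic_gauss_projective}.

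For the second step, the hyperbolic Gauss map $\bn^h_K: \bd K \to \bd K^*$---which for $K$ of class $\cC^2_+$ is a bijection between strictly convex hypersurfaces---corresponds under the gnomonic projection and the identification $g(K^*) = \overline K^\circ$ from \eqref{eqn:hyperbolic_polar_projective} precisely to the Euclidean polar map $\bx\mapsto \bx^\circ = \bn_{\overline K}(\bx)/(\bx\cdot\bn_{\overline K}(\bx))$ from $\bd\overline K$ to $\bd\overline K^\circ$: the tangent hyperplane at $\bx$ with Euclidean normal $\bn$ has, in the hyperboloid model, unit dual vector $(\bn,\bn\cdot\bx)/\sqrt{|1-(\bn\cdot\bx)^2|}$, whose gnomonic image is exactly $\bx^\circ$. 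By definition, $H_{d-1}^h(K,\bu)$ is the Jacobian of $\bn^h_K$ between the Riemannian hypersurfaces $(\bd K,\Vol_{\bd K}^h)$ and $(\bd K^*,\Vol_{\bd K^*}^h)$. Applying \eqref{eqn:hyperbolic_gauss_projective} at $\bx$ for $K$ and at $\bx^\circ$ for $K^*$, using $\bn_{\overline K^\circ}(\bx^\circ)=\bx/\|\bx\|_2$ (so $\bx^\circ\cdot\bn_{\overline K^\circ}(\bx^\circ) = 1/\|\bx\|_2$) together with Lemma \ref{lem:integral_formula} to write $\Vol_{\bd\overline K^\circ}(\dint\bx^\circ) = \kappa_o(K,\bx)(\bx\cdot\bn)\|\bx\|_2\,\Vol_{\bd\overline K}(\dint\bx)$, the product $\kappa_o(K,\bx)(\bx\cdot\bn)^{d+1}$ collapses to $H_{d-1}(\overline K,\bx)$ and the remaining powers of $|1-\|\bx\|_2^2|$ and $|1-(\bx\cdot\bn)^2|$ recombine into the $(d+1)/2$-power ratio in \eqref{eqn:hyperbolic_surf_projective}.

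The main obstacle is not conceptual but sign bookkeeping: $\epsilon=+1$ in the hyperbolic case ($\|\bx\|_2 < 1$, $(\bx\cdot\bn)^2 < 1$) while $\epsilon=-1$ in the de Sitter case ($\|\bx\|_2>1$, and $(\bx\cdot\bn)^2\geq 1$ by the space-like boundary condition which is automatic for $\cC^2_+$ proper future-directed de Sitter bodies). One must verify that the absolute values in \eqref{eqn:hyperbolic_gauss_projective} and \eqref{eqn:hyperbolic_surf_projective} absorb $\epsilon$ uniformly so that a single formula covers both settings, but beyond that the computation is entirely mechanical.
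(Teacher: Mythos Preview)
Your proposal is correct. The paper's own proof is essentially a citation to \cite[Eq.~3.12 and 3.16]{BW:2018} for the hyperbolic case, with the remark that the de Sitter case is analogous; the underlying argument in that reference is a direct differential-geometric computation of both the first and the second fundamental form of the gnomonic lift.

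Your Step~1 is exactly that direct computation of the pull-back metric, so there the two approaches coincide. Your Step~2, however, is a genuinely different route to the curvature formula: rather than computing the second fundamental form of $\tilde\psi$ (which involves the $\circ$-orthogonal projection of $\partial_i\partial_j\tilde\psi$ onto the unit normal, as in \cite{BW:2018}), you observe that the hyperbolic Gauss map becomes Euclidean polarity in the projective model and recover $H_{d-1}^h$ as the Jacobian of that map, reducing everything to \eqref{eqn:hyperbolic_gauss_projective} applied on both sides of the duality together with Lemma~\ref{lem:integral_formula}. This is elegant and avoids all second-derivative bookkeeping, at the price of requiring enough regularity for the Gauss map to be a local diffeomorphism and for Lemma~\ref{lem:integral_formula} to apply (so $K$ at least of class $\cC^{1,1}$). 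Since the theorem is only invoked later for bodies of class $\cC^2_+$, this restriction is harmless here; but note that the direct second-fundamental-form computation yields \eqref{eqn:hyperbolic_surf_projective} pointwise at every normal boundary point without any global smoothness hypothesis, which matches the ``for almost all $\bu$'' phrasing of the statement more closely.
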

\begin{proof}
    For $K\subset \H^d$ this was shown in \cite[Eq.~3.16 and Eq.~3.12]{BW:2018}. For $K\subset \dS_1^+$ the proof is completely analogous, since the tangent hyperplanes to $\bd K$ are space-like and therefore $\circ$ induces a Riemannian metric on $\bd K$. Also see \cite[Prop.\ 3.17]{GKT:2022} for expressions for the pseudo-Riemannian metric in the projective model of $\dS_1^+$ after gnomonic projection.
\end{proof}

\begin{theorem}\label{thm:hyperbolic_float_limit}
    Let $K\subset \H^d$ be a hyperbolic convex body, or let $K\subset \dS_1^d$ be proper future-directed de Sitter convex body. Then
    \begin{equation*}
        a_d^{\frac{2}{d+1}} \lim_{\delta\to 0^+} \frac{\Vol_d^h(K\setminus \mathcal{F}_\delta^h K)}{\delta^{\frac{2}{d+1}}} 
        = \Omega_1^h(K).
        %= \int_{\bd K} H_{d-1}^h(K,\bu)^{\frac{1}{d+1}} \, \Vol_{\bd K}^h(\dint \bu) =: \Omega_1^h(K).
    \end{equation*}
\end{theorem}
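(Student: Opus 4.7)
The plan is to cite our earlier work \cite{BW:2018} for the hyperbolic case, and reduce the de Sitter case to a weighted Euclidean floating-body problem via gnomonic projection. Since $K\subset \dS_1^d$ is proper future-directed, Lemma~\ref{lem:prop_deSitter}(i) lets us choose an isometry so that $\be_{d+1}\in\interior K^*$, equivalently $K\subset \interior H^+(\be_{d+1})$. The gnomonic projection $g$ then sends $K$ to a closed convex set $\overline{K}\subset \R^d\setminus \closure B_2^d$ (possibly unbounded), carries the de Sitter volume to the $\phi_h$-weighted Euclidean volume with $\phi_h(\bx) = |1-\|\bx\|^2|^{-(d+1)/2}$, and sends the de Sitter floating body to the $\phi_h$-weighted Euclidean floating body, $g(\mathcal{F}_\delta^h K) = \overline{K}_\delta^{\phi_h}$.

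The analytic core is the unpolarised weighted Schütt--Werner-type limit
\[
a_d^{\frac{2}{d+1}} \lim_{\delta\to 0^+} \frac{\Vol_d^\phi(\overline{K}\setminus \overline{K}_\delta^\phi)}{\delta^{\frac{2}{d+1}}} = \int_{\bd \overline{K}} \phi(\bx)^{\frac{d-1}{d+1}} H_{d-1}(\overline{K},\bx)^{\frac{1}{d+1}}\, \mathcal{H}^{d-1}(\dint \bx),
\]
which I would prove along the same lines as Theorem~\ref{thm:main_weighted}: decompose the shell $\overline{K}\setminus \overline{K}_\delta^\phi$ via a Gauss-map parametrisation, invoke the pointwise cap-height asymptotic $h_{\overline{K}}(\bu) - h_{\overline{K}_\delta^\phi}(\bu) \sim \delta^{2/(d+1)}(H_{d-1}/(a_d^2\phi^2))^{1/(d+1)}$ from Theorem~\ref{thm:limit_weights}, dominate using Lemma~\ref{lem:upper_bound_support}, and close by dominated convergence together with Federer's area formula. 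Specialising to $\phi = \phi_h$ and applying the identities \eqref{eqn:hyperbolic_gauss_projective} and \eqref{eqn:hyperbolic_surf_projective} to translate Euclidean surface area and Gauss--Kronecker curvature into their intrinsic hyperbolic counterparts --- exactly as in the spherical calculation at the end of the proof of Theorem~\ref{thm:sphere_main} --- the right-hand side collapses to $\int_{\bd K} H_{d-1}^h(K,\bu)^{1/(d+1)}\, \Vol_{\bd K}^h(\dint\bu) = \Omega_1^h(K)$.

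The hard part is that in the de Sitter setting $\overline{K}$ is typically unbounded and $\phi_h$ blows up along $\bd B_2^d$, so the standard weighted floating body machinery, which is stated for a compact body with a bounded weight, does not apply off the shelf. The resolution is that, as noted in the introduction, $\overline{K}\setminus \overline{K}_\delta^{\phi_h}$ is a compact subset of $\R^d\setminus \closure B_2^d$ for every $\delta>0$ and stays bounded away from the unit sphere, since caps of $K$ cut off by past-directed half-spaces are bounded. On this compact region $\phi_h$ is smooth and bounded between positive constants, so the local estimates of Theorem~\ref{thm:limit_weights} and the uniform bound of Lemma~\ref{lem:upper_bound_support} apply verbatim. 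To accommodate a potentially unbounded $\bd K$ one additionally passes to the limit via monotone convergence along any exhaustion of $\bd K$ by compact sets, for instance obtained by intersecting $K$ with past-directed slabs $H^-(\bu)\subset \dS_1^d$.
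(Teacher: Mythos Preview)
Your overall route is the paper's: gnomonic projection, then the unpolarised weighted floating-body limit (the paper simply cites \cite[Thm.~1.1]{BLW:2018} rather than re-proving it as you sketch), then the conversion via \eqref{eqn:hyperbolic_gauss_projective} and \eqref{eqn:hyperbolic_surf_projective}.

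Your ``hard part'', however, rests on a misreading of the geometry. The gnomonic image is \emph{always bounded}: the unbounded future end of $K$ (points with $v_{d+1}\to\infty$) is sent by $g$ toward $\S^{d-1}$, not to infinity, since $\|g(\bv)\|_2^2 = 1 + 1/v_{d+1}^2$ for $\bv\in\dS_1^+$. Because $\bd K$ is compact by the definition of a de Sitter convex body and is contained in $\dS_1^+$, its image $g(\bd K)$ is a compact subset of $\R^d\setminus\closure B_2^d$, bounded away from $\S^{d-1}$. As the paper states just before \eqref{eqn:hyperbolic_polar_projective}, a proper future-directed de Sitter convex body projects to a \emph{compact} Euclidean convex body $\overline{K}$ that contains $B_2^d$ in its interior; it is not a closed convex set contained in $\R^d\setminus\closure B_2^d$. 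The singularity of $\phi_h$ at $\S^{d-1}$ therefore sits in $\interior\overline{K}$, well away from $\bd\overline{K}$ where the floating body removes material, and per the remark after Theorem~\ref{thm:main_weighted} only the values of the weight near $\bd\overline{K}$ matter. Your monotone-convergence exhaustion by past-directed slabs is thus unnecessary, and the result follows directly from the compact-body version of the weighted floating-body limit.
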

\begin{proof}
    For $K\subset \H^d$ this was previously obtained in \cite[Thm.\ 1.2]{BW:2018}. W.l.o.g.\ we may assume that $K$ is in a position such that the gnomonic projection can be applied in $\be_{d+1}$. Using the results on the weighted floating body $\overline{K}^{\phi_h}_\delta$ from \cite[Thm.\ 1.1]{BLW:2018} and Theorem \ref{thm:local_hyperbolic} we derive
    \begin{align*}
        a_d^{\frac{2}{d+1}} \lim_{\delta\to 0^+} \frac{\Vol_d^h(K\setminus \mathcal{F}_\delta^h K)}{\delta^{\frac{2}{d+1}}} 
        &= a_d^{\frac{2}{d+1}} \lim_{\delta\to 0^+} \frac{\Vol_d^{\phi_h}(\overline{K}\setminus \overline{K}_\delta^{\phi_h})}{\delta^{\frac{2}{d+1}}} \\
        &= \int_{\bd \overline{K}} H_{d-1}(\overline{K},\bx)^{\frac{1}{d+1}} \left|1-\|\bx\|^2\right|^{-\frac{d-1}{2}} \,
            \Vol_{\bd \overline{K}}(\dint \bx)\\
        &= \int_{\bd \overline{K}} H_{d-1}^h(\overline{K},\bx)^{\frac{1}{d+1}} \, \Vol_{\bd \overline{K}}^h(\dint \bx). \qedhere
    \end{align*}
\end{proof}

We use the gnomonic projection, our results on the weighted volume of the weighted polar floating body in Theorem \ref{thm:main_weighted} and the tools developed in \cite{BW:2018} to relate the hyperbolic and Euclidean curvature and surface area element on $\bd \overline{K}$. We derive the following:
\begin{theorem}\label{thm:hyperbolic_main}
    Let $K\subset \H^d$ be a hyperbolic convex body, or $K\subset \dS_1^d$ be a future-directed de Sitter convex body, that is of class $\cC^2_+$. Then
    \begin{equation*}
        a_d^{\frac{2}{d+1}} \lim_{\delta\to 0^+} \frac{\Vol_d^h( \mathcal{F}^{h,*}_\delta K \setminus K)}{\delta^{\frac{2}{d+1}}} 
        = \Omega^h_{-d/(d+2)}(K).
        % = \int_{\bd K} H_{d-1}^h(K,\bu)^{-\frac{1}{d+1}} \, \Vol_{\bd K}^h(\dint \bu).
    \end{equation*}
\end{theorem}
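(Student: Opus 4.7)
The plan is to parallel the proof of Theorem \ref{thm:sphere_main}, with the gnomonic projection centered at $\be_{d+1}$ translating the problem into a weighted polar floating body problem in $\R^d$. First I would reduce to the case where $\be_{d+1}$ is an interior point of $K$ (for $K\subset\H^d$) or of $K^*$ (for $K\subset\dS_1^d$), using an isometry; by Lemma \ref{lem:prop_deSitter} this is possible, and it puts both $K$ and $K^*$ in the domain of the appropriate gnomonic chart. Writing $\overline{K}:=g(K)$, the key identifications are $g(K^*)=\overline{K}^\circ$ by \eqref{eqn:hyperbolic_polar_projective}, $g(\mathcal{F}_\delta^h K^*)=(\overline{K}^\circ)_\delta^{\phi_h}$, and consequently
\begin{equation*}
   g(\mathcal{F}_\delta^{h,*}K)=\big((\overline{K}^\circ)_\delta^{\phi_h}\big)^\circ,
\end{equation*}
together with the fact that the hyperbolic/de Sitter volume pulls back to $\Vol_d^{\phi_h}$.

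Since $K$ is of class $\cC^2_+$, so is $\overline{K}$, and because $\overline{K}$ contains the origin in its interior, $\overline{K}^\circ$ is also of class $\cC^{1,1}_+$. Moreover, in both cases $\overline{K}^\circ$ is bounded and separated from the critical sphere $\|\bx\|=1$ where $\phi_h$ blows up, so $\phi_h$ is continuous and strictly positive on a neighbourhood of $\overline{K}^\circ$ and near $\bd\overline{K}$. Applying Theorem \ref{thm:main_weighted} to $\overline{K}^\circ$ with $\phi=\psi=\phi_h$, and then using Lemma \ref{lem:integral_formula} together with the polarity identity \eqref{eqn:kappa_0-inv} to transfer the integral from $\bd\overline{K}^\circ$ back to $\bd\overline{K}$, yields
\begin{equation*}
   a_d^{\frac{2}{d+1}}\lim_{\delta\to 0^+}\frac{\Vol_d^h(\mathcal{F}_\delta^{h,*}K\setminus K)}{\delta^{\frac{2}{d+1}}}
   = \int_{\bd\overline{K}} \kappa_o(\overline{K},\bx)^{-\frac{1}{d+1}}\,\phi_h(\bx^\circ)^{-\frac{2}{d+1}}\phi_h(\bx)\,C_{\overline{K}}(\dint\bx).
\end{equation*}

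The final step is to recognise the right-hand side as $\Omega_{-d/(d+2)}^h(K)$. Using $\bx^\circ=\bn_{\overline{K}}(\bx)/(\bx\cdot\bn_{\overline{K}}(\bx))$, one computes $|1-\|\bx^\circ\|^2|=|1-(\bx\cdot\bn_{\overline{K}}(\bx))^2|/(\bx\cdot\bn_{\overline{K}}(\bx))^2$, so that $\phi_h(\bx^\circ)^{-2/(d+1)}(\bx\cdot\bn_{\overline{K}}(\bx))^2=|1-(\bx\cdot\bn_{\overline{K}}(\bx))^2|$. Combined with $C_{\overline{K}}(\dint\bx)=(\bx\cdot\bn_{\overline{K}}(\bx))\,\mathcal{H}^{d-1}(\dint\bx)$ and the definition $\kappa_o(\overline{K},\bx)=H_{d-1}(\overline{K},\bx)/(\bx\cdot\bn_{\overline{K}}(\bx))^{d+1}$, the integrand becomes
\begin{equation*}
   H_{d-1}(\overline{K},\bx)^{-\frac{1}{d+1}}\,\frac{|1-(\bx\cdot\bn_{\overline{K}}(\bx))^2|}{|1-\|\bx\|^2|^{\frac{d+1}{2}}}\,\mathcal{H}^{d-1}(\dint\bx),
\end{equation*}
which by the curvature and surface-area conversions \eqref{eqn:hyperbolic_gauss_projective}, \eqref{eqn:hyperbolic_surf_projective} of Theorem \ref{thm:local_hyperbolic} equals $H_{d-1}^h(K,\bu)^{-1/(d+1)}\,\Vol_{\bd K}^h(\dint\bu)$, giving $\Omega_{-d/(d+2)}^h(K)$.

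The main obstacle is the book-keeping of signs and absolute values arising from the indefinite Lorentz product when $K$ sits in $\dS_1^d$: the tangent hyperplanes to $\bd K$ are space-like but the enveloping linear algebra produces $|1-\|\bx\|^2|$ and $|1-(\bx\cdot\bn_{\overline{K}}(\bx))^2|$ rather than their signed counterparts. However, since these absolute values appear with matching powers on both sides of the identification, the verification reduces to the same algebraic manipulation as in the hyperbolic case $K\subset\H^d$. A minor secondary point is ensuring that Theorem \ref{thm:main_weighted} applies when $K\subset\dS_1^d$: here $\overline{K}^\circ\subset\interior B_2^d$ so $\phi_h$ is smooth and strictly positive on a neighbourhood of $\overline{K}^\circ$, which is all that is required.
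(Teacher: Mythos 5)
Your proposal is correct and follows essentially the same route as the paper's own proof: reduce to a convenient position via an isometry, pass to the projective model by the gnomonic projection, apply Theorem~\ref{thm:main_weighted} with $\phi=\psi=\phi_h$ to the polar body $\overline{K}^\circ$, transfer the resulting integral from $\bd\overline{K}^\circ$ to $\bd\overline{K}$ via Lemma~\ref{lem:integral_formula} and \eqref{eqn:kappa_0-inv}, and finally recognise the integrand using the conversion formulas \eqref{eqn:hyperbolic_gauss_projective}--\eqref{eqn:hyperbolic_surf_projective}. Your explicit algebra in fact arrives at the correct integrand $H_{d-1}(\overline{K},\bx)^{-1/(d+1)}\,|1-(\bx\cdot\bn_{\overline{K}}(\bx))^2|\,|1-\|\bx\|_2^2|^{-(d+1)/2}$, whereas the display in the paper's proof appears to carry a typo (the factor $\bigl|\tfrac{1-\|\bx\|_2^2}{1-(\bx\cdot\bn_{\overline{K}}(\bx))^2}\bigr|^{(d+2)/2}$ should read $\bigl|\tfrac{1-(\bx\cdot\bn_{\overline{K}}(\bx))^2}{1-\|\bx\|_2^2}\bigr|^{1/2}$, as in the spherical computation of Theorem~\ref{thm:sphere_main}). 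One small imprecision in your write-up: when $K\subset\H^d$, the set $\overline{K}^\circ$ is \emph{not} separated from the critical sphere $\|\bx\|_2=1$ --- it contains it in its interior --- so $\phi_h$ is not continuous on all of $\overline{K}^\circ$; what you actually need, and what does hold, is continuity in a neighbourhood of $\bd\overline{K}^\circ$ (and of $\bd\overline{K}$), which suffices by the remark following Theorem~\ref{thm:main_weighted}.
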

\begin{proof}
    For $K\subset \H^d$, we may assume w.l.o.g.\ that $\be_{d+1}$ is an interior point of $K$.
    Furthermore, by Lemma \ref{lem:prop_deSitter} iii) we notice that $\Vol_d^h((K_\delta)^*\setminus K^*)$ is finite for all $\delta>0$ small enough.
    
    If $K\subset \dS_1^d$, then it is proper since it is of class $\cC^2_+$. Hence, we may assume w.l.o.g.\ that $K\subset \dS_1^+$.
    
    Thus in either case we may apply the gnomonic projection and by calculations analogous to the proof of Theorem \ref{thm:sphere_main} we find
    \begin{multline*}
        \lim_{\delta\to 0^+} \frac{\Vol_d^h(\mathcal{F}_\delta^{h,*} K \setminus K)}{\delta^{\frac{2}{d+1}}}\\
        = a_d^{-\frac{2}{d+1}} \int_{\bd \overline{K}} H_{d-1}(\overline{K},\bx)^{-\frac{1}{d+1}}
            \left|\frac{1-\|\bx\|_2^2}{1-(\bx\cdot \bn_{\overline{K}}(\bx))^2}\right|^{\frac{d+2}{2}}
            \frac{\sqrt{\left|1-(\bx\cdot \bn_{\overline{K}}(\bx))^2\right|}}{|1-\|\bx\|_2^2|^\frac{d}{2}}
            \,\Vol_{\bd \overline{K}}(\dint \bx).
    \end{multline*}
    Then the theorem follows by Theorem \ref{thm:local_hyperbolic}.
\end{proof}

We are now ready to establish the basic properties of our derived curvature measures $\Omega^\diamond_{-d/(d+2)}$ as stated in Theorem \ref{thm:dual_formula_non-euclidean}. 

\begin{proof}[Proof of Theorem \ref{thm:dual_formula_non-euclidean}]
    For the invariance with respect to isometries we just note that by definition the surface area and Gauss--Kronecker curvature are intrinsic notions and therefore also $\Omega_{-d/(d+2)}^\diamond$ is invariant with respect to isometries.
    
    For i) and ii), i.e., the valuation property and the lower semi-continuity, we refer to Ludwig \cite[Thm.\ 6]{Ludwig:2010}, who follows Schütt \cite{Schutt:1993} for the valuation property and her own work \cite{Ludwig:2001} for the lower semi-continuity. Ludwig's arguments are again easily adapted to our situation, compare also \cite[Sec.\ 5.1]{BW:2018}.
    
    Finally, iii) follows from Lemma \ref{lem:integral_formula} by using a projective model: let $\overline{K}\subset \R^d$ be a Euclidean convex body that is the projective model of $K$. Since $\overline{K}$ is of class $\cC^2_+$, we derive by Lemma \ref{lem:integral_formula} and equations \eqref{eqn:spherical_gauss_projective}, \eqref{eqn:spherical_surf_projective}, \eqref{eqn:hyperbolic_gauss_projective}, and \eqref{eqn:hyperbolic_surf_projective},
    \begin{align*}
        \Omega_{-d/(d+2)}^\diamond(K) 
            &= \int_{\bd \overline{K}} H^\diamond_{d-1}(\overline{K},\bx)^{-\frac{1}{d+1}} \, \Vol_{\bd \overline{K}}^\diamond(\dint \bx)\\
            &= \int_{\bd \overline{K}} \kappa_o(\overline{K},\bx)^{-\frac{1}{d+1}} \phi^\diamond(\|\bx\|_2)^{-(d+1)} \phi^\diamond(\|\bx^\circ\|_2)^2 
                    \,C_{\overline{K}}(\dint \bx)\\
            &= \int_{\bd \overline{K}^\circ} \kappa_o(\overline{K}^\circ,\by)^{\frac{d+2}{d+1}} \phi^\diamond(\|\by^\circ\|_2)^{-(d+1)} \phi^\diamond(\|\by\|_2)^2 \|\by\|_2
                    \,C_{\overline{K}^\circ}(\dint \by)\\
            &= \int_{\bd \overline{K}^\circ} H^\diamond_{d-1}(\overline{K}^\circ,\by)^{\frac{d+2}{d+1}} \, \Vol_{\bd \overline{K}^\circ}^\diamond(\dint \by),
    \end{align*}
    where 
    \begin{equation*}
        \phi^\diamond(t) := \begin{cases} 
                            \sqrt{1+t^2} & \text{if $\diamond =s$,}\\
                            \sqrt{|1-t^2|} &\text{if $\diamond=h$.}
                         \end{cases}
    \end{equation*}
    This concludes the proof since $\overline{K}^\circ$ is exactly the same as $K^*$ in the projective model for $\diamond = h$, see \eqref{eqn:hyperbolic_polar_projective}, respectively $-K^*$ in the projective model for $\diamond = s$, see \eqref{eqn:spherical_polar_projective}.
\end{proof}

\subsection{Convex bodies in real space forms}\label{sec:real_analytic}

Let $\Sp^d(\lambda)$ be the real space form of dimension $d$ and curvature $\lambda$.
We identify $\Sp^d(\lambda)$ with either a Euclidean sphere in $\R^{d+1}$ if $\lambda>0$, with $\R^d$ if $\lambda=0$, or with a hyperboloid in $\R^{d,1}$ if $\lambda<0$. Indeed, we have
\begin{equation*}
    \Sp^d(\lambda) \cong \begin{cases}
                            \S^d(\lambda) := \{\bx\in \R^{d+1}: \bx\cdot \bx = 1/\lambda\}
                                &\text{if $\lambda>0$},\\
                            \R^d &\text{if $\lambda=0$},\\
                            \H^d(|\lambda|) := \{\bx\in\R^{d,1}: \bx\circ \bx = -1/|\lambda|\}
                                &\text{if $\lambda<0$}.
                         \end{cases}
\end{equation*}
In addition, for $\lambda<0$, we consider the Lorentz space form $\Sp^d_1(\lambda)$ of dimension $d$ and constant curvature $\lambda$, which we identify with the hyperboloid
\begin{equation*}
    \Sp^d_1(\lambda) \cong \dS^d_1(|\lambda|) := \{\bx\in\R^{d,1} : \bx\circ\bx = 1/|\lambda|\}.
\end{equation*}

A convex body $K\subset \Sp^d(\lambda)$ is a compact geodesically convex subset such that for any two points $\bx,\by\in K$, if there is a unique geodesic segment connecting them, then it is contained in $K$. Equivalently, for $\lambda\neq 0$ we have that $K\subset\Sp^d(\lambda)$ is convex if and only if $\operatorname{rad} K$ is a closed convex cone in $\R^{d+1}$ if $\lambda>0$, or a cone contained in the interior of the light-cone in $\R^{d+1}_1$ if $\lambda<0$.

The dual body $K^*\subset\Sp^d(1/\lambda)$ for $\lambda \geq 0$, respectively $K^*\subset \Sp^d_1(1/\lambda)$ for $\lambda <0$, of a convex body $K\subset \Sp^d(\lambda)$ is defined by
\begin{equation*}
    K^* = \begin{cases}
            \{\by\in \Sp^d(1/\lambda) : \bx\cdot \by \geq 0 \text{ for all $\bx\in K$}\} &
                \text{if $\lambda>0$},\\
            K^\circ &\text{if $\lambda=0$},\\
            \{\by\in \Sp^d_1(1/\lambda) : \bx\circ \by \leq 0 \text{ for all $\bx\in K$}\} &
                \text{if $\lambda<0$}.
          \end{cases}
\end{equation*}
Note that
\begin{equation*}
    \operatorname{rad} K^* = (\operatorname{rad} K)^* = \begin{cases}
                                                            \{\bv\in\R^{d+1}: \bu\cdot \bv \geq 0\} &
                                                                \text{if $\lambda>0$},\\
                                                            \{\bv\in\R^{d,1}: \bu\circ \bv \leq 0\} &
                                                                \text{if $\lambda<0$}.
                                                        \end{cases}
\end{equation*}

\begin{theorem}
    Let $\lambda\neq 0$ and $K\subset \Sp^d(\lambda)$ be a convex body of class $\cC^2_+$. Then
    \begin{equation*}
        H^\lambda_{d-1}(K,\bu) \, H^{1/\lambda}_{d-1}(K^*,\bu^*) = 1 \quad \text{for all $\bu\in\bd K$},
    \end{equation*}
    where $\bu^* := \bn_K(\bu)$ is the uniquely determined normal boundary point of $K^*$ such that $\bu\cdot \bu^*=0$ if $\lambda>0$, respectively $\bu\circ \bu^*=0$ if $\lambda<0$.
\end{theorem}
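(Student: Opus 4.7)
The plan is to reduce the identity first to the unit-curvature cases $\lambda=\pm 1$ via the gnomonic projective model and the Euclidean polarity relation \eqref{eqn:kappa_0-inv}, and then to handle general $\lambda\neq 0$ by rescaling the ambient space.

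For $\lambda=1$, I may assume that $\be_{d+1}$ is an interior point of $K$ and pass to the projective model $\overline{K}=g(K)$; by \eqref{eqn:spherical_polar_projective}, the spherical dual $K^*$ is represented (up to the antipodal map, which preserves Gauss--Kronecker curvature) by $\overline{K}^\circ$. Writing $\bx = g(\bu)\in\bd\overline{K}$ and $\by = \bx^\circ\in\bd\overline{K}^\circ$, the definition of the polar point together with $(K^\circ)^\circ = K$ yield the elementary identities $\bx\cdot\by = 1$, $\bx\cdot\bn_{\overline{K}}(\bx) = 1/\|\by\|$ and $\by\cdot\bn_{\overline{K}^\circ}(\by) = 1/\|\bx\|$. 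Applying \eqref{eqn:spherical_gauss_projective} at $\bx$ and at $\by$ and multiplying yields
\begin{equation*}
H^s_{d-1}(K,\bu)\,H^s_{d-1}(K^*,\bu^*) = \left(\frac{(1+\|\bx\|^2)(1+\|\by\|^2)}{(1+1/\|\by\|^2)(1+1/\|\bx\|^2)}\right)^{(d+1)/2} H_{d-1}(\overline{K},\bx)\,H_{d-1}(\overline{K}^\circ,\by).
\end{equation*}
A short calculation collapses the bracketed factor to $(\|\bx\|\|\by\|)^{d+1}$, while \eqref{eqn:kappa_0-inv} spelled out reads $H_{d-1}(\overline{K},\bx)H_{d-1}(\overline{K}^\circ,\by) = (\bx\cdot\bn_{\overline{K}})^{d+1}(\by\cdot\bn_{\overline{K}^\circ})^{d+1} = (\|\bx\|\|\by\|)^{-(d+1)}$, so the product is $1$. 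The case $\lambda=-1$ is handled identically using \eqref{eqn:hyperbolic_polar_projective} and \eqref{eqn:hyperbolic_gauss_projective}; the absolute values in the hyperbolic formula contribute trivially because $\|\bx\|<1<\|\by\|$ in the disk model.

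For general $\lambda\neq 0$ I rescale to unit curvature. For $\lambda>0$ the maps $\bu\mapsto\sqrt{\lambda}\,\bu$ and $\bv\mapsto\bv/\sqrt{\lambda}$ send $\Sp^d(\lambda)$ and $\Sp^d(1/\lambda)$ to $\S^d$; since both dualities are cut from the single cone $(\operatorname{rad} K)^*\subset\R^{d+1}$, the rescaled bodies $\widetilde{K}$ and $\widetilde{K^*}$ are spherical duals in $\S^d$, and $\widetilde{\bu^*}$ is the outer unit normal to $\widetilde{K}$ at $\widetilde{\bu}$. Because a uniform scaling of distances on a hypersurface by a factor $c$ multiplies the generalized Gauss--Kronecker curvature by $c^{-(d-1)}$,
\begin{equation*}
H^\lambda_{d-1}(K,\bu) = \lambda^{(d-1)/2}\,H^1_{d-1}(\widetilde{K},\widetilde{\bu}), \qquad H^{1/\lambda}_{d-1}(K^*,\bu^*) = \lambda^{-(d-1)/2}\,H^1_{d-1}(\widetilde{K^*},\widetilde{\bu^*}).
\end{equation*}
The two powers of $\lambda$ cancel in the product, so the result follows from the $\lambda=1$ case. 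The $\lambda<0$ case is entirely analogous, using the pair $(\H^d,\dS^d_1)$ in place of the two unit spheres and formula \eqref{eqn:hyperbolic_gauss_projective}.

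The main obstacle is the bookkeeping in two places: first, verifying that the rescaling intertwines the dualities on $\Sp^d(\lambda)$ and $\Sp^d(1/\lambda)$, which reduces to the cone identity $\operatorname{rad} K^* = (\operatorname{rad} K)^*$ and the fact that positive scalings fix closed convex cones; and second, confirming that the exponent of $\lambda$ in the curvature scaling is exactly $(d-1)/2$ and is of opposite sign for $K$ and $K^*$, so that the factors cancel exactly. Once these are in place, the identity at $|\lambda|=1$ is essentially an algebraic consequence of \eqref{eqn:kappa_0-inv} combined with the projective-model curvature formulas already established in Theorem \ref{thm:local_hyperbolic} and equation \eqref{eqn:spherical_gauss_projective}.
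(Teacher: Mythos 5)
Your proof is correct, and it shares the paper's core mechanism (pass to the projective model, apply the curvature transformation formula, and invoke $\kappa_o(\overline{K},\bx)\kappa_o(\overline{K}^\circ,\bx^\circ)=1$ from \eqref{eqn:kappa_0-inv}), but it organizes the $\lambda$-dependence differently. The paper works directly for all $\lambda\neq 0$ in the projective model, applies the general formula \eqref{eqn:lambda_gauss} for $H^\lambda_{d-1}(\overline{K},\bx)$ at both $\bx$ and $\bx^\circ$, and collapses the product to $\kappa_o\cdot\kappa_o=1$ in one computation. You instead establish the unit-curvature cases $\lambda=\pm 1$ from \eqref{eqn:spherical_gauss_projective} and \eqref{eqn:hyperbolic_gauss_projective}, and then reduce general $\lambda$ to $|\lambda|=1$ by the rescalings $\bu\mapsto\sqrt{|\lambda|}\,\bu$ and $\bv\mapsto\bv/\sqrt{|\lambda|}$, using that $\rad K^* = (\rad K)^*$ intertwines the dualities and that the intrinsic Gauss--Kronecker curvature is $(d-1)$-homogeneous under metric scaling, so the factors $\lambda^{\pm(d-1)/2}$ cancel between $K$ and $K^*$. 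The rescaling route makes the cancellation of the curvature exponents conceptually transparent, at the cost of having to establish the two auxiliary facts (duality intertwining and curvature homogeneity) that the paper avoids by already having the general projective-model formula \eqref{eqn:lambda_gauss} in hand. One small imprecision: for $\lambda>0$ the gnomonic image of the spherical dual is $-\overline{K}^\circ$, so the map you invoke is the Euclidean reflection through the origin in the model $\R^d$ rather than the spherical antipodal map (which projects to the identity); the conclusion is unaffected since this reflection is a linear isometry of $\R^d$ preserving $H_{d-1}$, $\|\cdot\|_2$, and the radially symmetric weight, but it is worth stating precisely.
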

\begin{proof}
    Since $K$ is of class $\cC^2_+$ we may assume that, for $\lambda >0$, $K$ is contained in an open hemisphere.
    Thus we assume w.l.o.g.\ that $K$ contains $\be_{d+1}$ in the interior and if $\lambda>0$, then we also assume that $K$ is contained in the open half-sphere $\S^d(\lambda) \cap H^+(\be)$.
    Thus, using the gnomonic projection, we may identify $K$ with a Euclidean convex body $\overline{K}:=g^\lambda(K)\subset \R^d$ that contains the origin in the interior in the Euclidean model of $\Sp^d(\lambda)$. 
    Here we consider the gnomonic projection $g^\lambda$ defined by
    \begin{equation*}
        \bx = g^\lambda(\bu) := \frac{1}{\sqrt{|\lambda|}} \left(\frac{u_1}{u_{d+1}},\dotsc,\frac{u_d}{u_{d+1}}\right).
    \end{equation*}
    Then
    \begin{equation*}
        \by = g^{1/\lambda}(\bu^*) = \sqrt{|\lambda|} \left(\frac{u_1^*}{u_{d+1}^*},\dotsc,\frac{u_d^*}{u_{d+1}^*}\right)
    \end{equation*}
    and since $\sum_{i=1}^d u_i u_i^*+\mathrm{sign}(\lambda)u_{d+1}u_{d+1}^* = 0$, we conclude
    \begin{equation*}
        \bx\cdot \by 
        = \frac{\sum_{i=1}^d u_i u_i^*}{u_{d+1} u_{d+1}^*} 
        = -\mathrm{sign}(\lambda).
    \end{equation*}
    Hence, for $\lambda>0$, $\bx^\circ = -\by$, respectively for $\lambda<0$, $\bx^\circ=\by$. This yields
    \begin{equation}\label{eqn:lambda_polar}
        g^{1/\lambda}(K^*) = \begin{cases}
                                -\overline{K}^\circ & \text{for $\lambda>0$},\\
                                \overline{K}^\circ & \text{for $\lambda<0$}.
                             \end{cases}
    \end{equation}
    Thus
    \begin{align*}
        H^\lambda_{d-1}(K,\bu) H_{d-1}^{1/\lambda}(K^*,\bu^*)
        &=H^\lambda_{d-1}(\overline{K},\bx) H_{d-1}^{1/\lambda}(\overline{K}^\circ,\bx^\circ)\\
        &=H_{d-1}(\overline{K},\bx) H_{d-1}(\overline{K}^\circ, \bx^\circ)
            \left(\frac{1+\lambda\|\bx\|^2_2}{1+\lambda/\|\bx^\circ\|^2_2}
            \cdot \frac{1+\|\bx^\circ\|_2^2/\lambda}{1+1/(\lambda\|\bx\|^2_2)}
                \right)^{\frac{d+1}{2}}\\
        &= \kappa_o(\overline{K},\bx) 
            \, \kappa_o(\overline{K}^\circ,\bx^\circ)
        =1.
    \end{align*}
    Here we used $\|\bx^\circ\|_2 = (\bx\cdot \bn_{\overline{K}}(\bx))^{-1}$ and 
    $\|\bx\|_2 = (\bx^\circ\cdot \bn_{\overline{K}^\circ}(\bx^\circ))^{-1}$, as well as,
    \begin{equation}\label{eqn:lambda_gauss}
        H_{d-1}^{\lambda}(\overline{K},\bx) 
        = H_{d-1}(\overline{K},\bx) \left(\frac{1+\lambda\|\bx\|^2_2}{1+\lambda(\bx\cdot \bn_{\overline{K}}(\bx))^2}\right)^{\frac{d+1}{2}}
        = H_{d-1}(\overline{K},\bx)\left(\frac{1+\lambda\|\bx\|^2_2}{1+\lambda/\|\bx^\circ\|^2_2}\right)^{\frac{d+1}{2}},
    \end{equation}
    see \cite[Eq.~3.24]{BW:2018}, and \eqref{eqn:kappa_0-inv}.
\end{proof}

Similar to Theorems \ref{thm:sphere_main} and \ref{thm:hyperbolic_main} we may derive the following theorem. We use Theorem \ref{thm:main_weighted} and tools developed in \cite{BW:2018}.
\begin{theorem}\label{thm:main_lambda}
    Let $\lambda\neq 0$ and let $K\subset\Sp^d(\lambda)$ be a convex body of class $\cC^2_+$. Then
    \begin{equation*}
        a_d^{\frac{2}{d+1}} \lim_{\delta\to 0^+} \frac{\Vol_d^{\lambda}(\mathcal{F}_\delta^{\lambda,*} K) - \Vol_d^\lambda(K)}{\delta^{\frac{2}{d+1}}}
        = \frac{1}{|\lambda|} O^\lambda_{-d/(d+2)}(K).
    \end{equation*}
\end{theorem}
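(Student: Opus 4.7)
The plan is to extend the proofs of Theorems \ref{thm:sphere_main} and \ref{thm:hyperbolic_main} to arbitrary $\lambda\neq 0$ by passing to the Euclidean projective chart via the gnomonic projection $g^\lambda$. Since $K$ is of class $\cC^2_+$ it is strictly convex, so in the spherical case $\lambda>0$ it lies in an open hemisphere; after an isometry we may assume $\be_{d+1}\in\interior K$. Then $g^\lambda$ identifies $K$ with a Euclidean convex body $\overline{K}\subset\R^d$ of class $\cC^2_+$ containing the origin in its interior, and by \eqref{eqn:lambda_polar} identifies $K^*\subset\Sp^d(1/\lambda)$ (or $\Sp^d_1(1/\lambda)$ for $\lambda<0$) with $\pm\overline{K}^\circ$. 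For $\lambda<0$ the dual $K^*$ is a proper future-directed de Sitter convex body, and the analog of Lemma \ref{lem:prop_deSitter} iii) guarantees that $\mathcal{F}_\delta^{\lambda,*}K\setminus K$ is bounded and has finite $\Vol_d^\lambda$-measure for all sufficiently small $\delta$.

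Under $g^\lambda$ the canonical volume measure $\Vol_d^\lambda$ pulls back to a radially symmetric weighted Lebesgue measure $\Vol_d^{\phi_\lambda}$ with density
\[
\phi_\lambda(\bx) = \bigl|1+\lambda\|\bx\|_2^2\bigr|^{-(d+1)/2},
\]
which specialises to $\phi_s$ for $\lambda=1$ and to $\phi_h$ for $\lambda=-1$. Likewise, $\Vol_d^{1/\lambda}$ on the dual space pulls back to $\Vol_d^{\phi_{1/\lambda}}$, and the floating body $\mathcal{F}_\delta^{1/\lambda}K^*$ corresponds to the Euclidean weighted floating body $(\overline{K}^\circ)_\delta^{\phi_{1/\lambda}}$. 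Hence
\[
\Vol_d^\lambda\bigl(\mathcal{F}_\delta^{\lambda,*}K\bigr) - \Vol_d^\lambda(K) = \Vol_d^{\phi_\lambda}\bigl(((\overline{K}^\circ)_\delta^{\phi_{1/\lambda}})^\circ\bigr) - \Vol_d^{\phi_\lambda}(\overline{K}).
\]
I would then apply Theorem \ref{thm:main_weighted} to the base body $\overline{K}^\circ$ (which is of class $\cC^{1,1}$ since $\overline{K}$ is of class $\cC^2_+$), with $\phi = \phi_{1/\lambda}$ and $\psi = \phi_\lambda$, to rewrite the limit as an integral over $\bd \overline{K}^\circ$, and then pull this integral back to $\bd\overline{K}$ using Lemma \ref{lem:integral_formula} together with the centro-affine identity $\kappa_o(\overline{K}^\circ,\bx^\circ)\,\kappa_o(\overline{K},\bx) = 1$.

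The final step is to substitute \eqref{eqn:lambda_gauss} and the $\lambda$-analog of the intrinsic surface-element identity (extending \eqref{eqn:spherical_surf_projective} and \eqref{eqn:hyperbolic_surf_projective} to arbitrary $\lambda$ by the argument of \cite{BW:2018}), together with the identity $\|\bx^\circ\|_2^{-2} = (\bx\cdot\bn_{\overline{K}}(\bx))^2$, to rewrite the integrand intrinsically. A short computation then shows that the $\lambda$-dependent factors from $\phi_{1/\lambda}(\bx^\circ)^{-2/(d+1)}\,\phi_\lambda(\bx)$, the centro-affine substitution $\kappa_o(\overline{K},\bx)^{-1/(d+1)} = H_{d-1}(\overline{K},\bx)^{-1/(d+1)}\,(\bx\cdot\bn_{\overline{K}}(\bx))$, and the cone-volume measure combine precisely into $|\lambda|^{-1}\,H^\lambda_{d-1}(\overline{K},\bx)^{-1/(d+1)}\,\Vol_{\bd\overline{K}}^\lambda(\dint\bx)$, which integrates to the claimed $|\lambda|^{-1}\,O^\lambda_{-d/(d+2)}(K)$. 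The main obstacle is exactly this bookkeeping of the various $\lambda$-scalings and, in particular, tracking down the origin of the overall prefactor $1/|\lambda|$, which ultimately encodes the asymmetry between the $\lambda$-weighted volume used on the primal side and the $(1/\lambda)$-weighted volume used to measure caps on the dual side.
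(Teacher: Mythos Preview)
Your proposal is correct and follows essentially the same route as the paper: pass to the projective model via $g^\lambda$, identify the quotient with $\Vol_d^{\phi_\lambda}\bigl(((\overline{K}^\circ)_\delta^{\phi_{1/\lambda}})^\circ\bigr)-\Vol_d^{\phi_\lambda}(\overline{K})$, apply Theorem~\ref{thm:main_weighted} to the base body $\overline{K}^\circ$, pull the resulting integral back to $\bd\overline{K}$ via Lemma~\ref{lem:integral_formula} and \eqref{eqn:kappa_0-inv}, and then use \eqref{eqn:lambda_gauss} together with the $\lambda$-surface-element identity from \cite{BW:2018} to rewrite everything intrinsically and isolate the $1/|\lambda|$ factor. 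The paper's proof simply compresses the Theorem~\ref{thm:main_weighted}/Lemma~\ref{lem:integral_formula} step into a single line, writing the integral over $\bd\overline{K}$ directly.
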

\begin{proof}
    If $\lambda>0$, then we may assume that $K$ is contained in an open hemisphere since $K$ is of class $\cC^2_+$. Thus, for all $\lambda\neq 0$, using the gnomonic projection, we may identify $K$ with a Euclidean convex body $\overline{K}\subset \R^d$ that contains the origin in the interior in the Euclidean model of $\Sp^d(\lambda)$. 
    We calculate, for $\phi_{\lambda}(\bx) = |1+\lambda\|\bx\|^2_2|^{-(d+1)/2}$, that
    \begin{align*}
        a_d^{\frac{2}{d+1}} \lim_{\delta\to 0^+} \frac{\Vol_d^{\lambda}(\mathcal{F}_\delta^{\lambda,*} K) - \Vol_d^\lambda(K)}{\delta^{\frac{2}{d+1}}}
        &= a_d^{\frac{2}{d+1}} \lim_{\delta\to 0^+} \frac{\Vol_d^{\phi_{\lambda}}(((\overline{K}^\circ)_\delta^{\phi_{1/\lambda}})^\circ) -\Vol_d^{\phi_\lambda}(\overline{K})}
            {\delta^{\frac{2}{d+1}}}\\
        &= \int_{\bd \overline{K}} \kappa_o(K,\bx)^{-\frac{1}{d+1}} \phi_{\lambda}(\bx) 
            \phi_{1/\lambda}(\bx^\circ)^{-\frac{2}{d+1}} \, C_{\overline{K}}(\dint x)\\
        &= \frac{1}{|\lambda|} \int_{\bd \overline{K}} H_{d-1}^\lambda(\overline{K},\bx)^{-\frac{1}{d+1}} \, \Vol^\lambda_{\bd \overline{K}}(\dint \bx),
    \end{align*}
    where in the last equation we used \eqref{eqn:lambda_gauss} and the fact that for any Borel $A\subset \bd K$ we have that
    \begin{equation*}
         \Vol_{\bd K}^\lambda(A) = \Vol_{\bd\overline{K}}^\lambda(g^{\lambda}(A)) 
         = \int_{g^\lambda(A)} \frac{\sqrt{\lambda + \|\bx^\circ\|^2}}{(1+\lambda \|\bx\|_2^2)^{d/2}} \, C_{\overline{K}}(\dint x),
    \end{equation*}
    see \cite[Eq.\ 3.23]{BW:2018}. 
\end{proof}

\begin{remark}
    The proof of Theorem \ref{thm:main_lambda} can easily be adapted to show that for $\lambda <0$ the statement holds true for a de Sitter convex body $K\subset \Sp_1^d(\lambda)$ of class $\cC^2_+$, that is,
    \begin{equation}
        a_d^{\frac{2}{d+1}} \lim_{\delta\to 0^+} \frac{\Vol_d^{\lambda}(\mathcal{F}_\delta^{\lambda,*} K\setminus K)}{\delta^{\frac{2}{d+1}}}
        = \frac{1}{|\lambda|} O^\lambda_{-d/(d+2)}(K).
    \end{equation}
\end{remark}

By fixing $\overline{K}\subset\R^d$ in the projective model of $\Sp^d(\lambda)$ and rescaling $\delta$ with respect to $\lambda$ for $\lambda\to 0$ we derive another proof of Theorem \ref{thm:V1_illumination_body} (see Section \ref{sec:weighted_polar_volume} for the first proof).
\begin{proof}[Proof of Theorem \ref{thm:V1_illumination_body}]
    Let $\overline{K}\subset \R^d$ be a convex body of class $\cC_2^+$ that contains the origin in the interior.
    %and set $\delta_\lambda=\Vol_{d-1}(B_2^{d-1})\lambda^{(d+1)/2}\delta$. 
    We observe, for $t\in\R$ and $\bu\in\S^{d-1}$, that
    \begin{align*}
    \lim_{\lambda \to 0} \lambda^{-\frac{d+1}{2}} \Vol_d^{\phi_{1/\lambda}}(\overline{K}^\circ \cap H^+(\bu, t)) 
        &=\lim_{\lambda\to 0^+} \int_{\overline{K}^\circ \cap H^+(\bu, t)} \frac{1}{(\lambda+\|\by\|_2^2)^{\frac{d+1}{2}}} \, \dint \by \\
        &= \int_{\overline{K}^\circ \cap H^+(\bu, t)} \|\by\|^{-(d+1)} \, \dint \by\\
        &= \Vol_{d-1}(B_2^{d-1}) \Delta_1([\overline{K},\bx],\overline{K}),
    \end{align*}
    where $\bx = \frac{\bu}{t}$.
    Thus, for $\delta_\lambda = \Vol_{d-1}(B_2^{d-1}) \lambda^{\frac{d+1}{2}} \delta$, and since in the projective model $\cF^{\lambda,*}_{\delta_\lambda} \overline{K} = \left[(\overline{K}^\circ)_{\delta_\lambda}^{\phi_{1/\lambda}}\right]^\circ$, we derive
    \begin{align*}
        \lim_{\lambda\to 0} \rho_{\cF^{\lambda,*}_{\delta_\lambda} \overline{K}}(\bu)
        %\lim_{\lambda\to 0} \rho_{\left[(\overline{K}^\circ)_{\delta_\lambda}^{\phi_{1/\lambda}}\right]^\circ}(\bu)
        = \lim_{\lambda\to 0} \frac{1}{ h_{(\overline{K}^\circ)_{\delta_\lambda}^{\phi_{1/\lambda}}}(\bu)} 
        = \rho_{\mathcal{I}_\delta^{\Delta_1}\overline{K}}(\bu),
    \end{align*}
    for all $\bu\in\S^{d-1}$.
    So $\mathcal{F}_{\delta_\lambda}^{\lambda,*} \overline{K} \to \mathcal{I}^{V_1}_\delta \overline{K}$ for $\lambda \to 0$ and \eqref{eqn:olaf} follows by Theorem \ref{thm:main_lambda}.
\end{proof}

\subsection{Real-analytic extension} \label{sec:star_polarity}

We notice that for $\lambda\to 0$ Theorem \ref{thm:main_lambda} does not yield Lutwak's centro-affine curvature measure $\as_{-d/(d+2)}$, that is, Theorem \ref{thm:main_weighted} with uniform weights. One reason is, that the polarity $^\circ$ on Euclidean convex bodies in $\R^d$ depends on the position of the origin $o\in\R^d$ and thus Theorem \ref{thm:main_weighted} with uniform weights is not translation invariant. However, Theorem \ref{thm:main_lambda} is invariant with respect to all isometries of $\Sp^d(\lambda)$ and the duality mapping $^*$ does not depend on the position of a fixed point. Thus, by fixing a point $\be\in\Sp^d(\lambda)$ and rescaling the convex body $K^*$ with respect to this point we can define a \emph{$\be$-polarity mapping} on convex bodies in $\Sp^d(\lambda)$ that contain $\be$ in the interior.
For $\lambda\in \R$, $\lambda\neq 0$, we set
\begin{equation*}
        \tan_{\lambda}(\alpha) :=\begin{cases}
                                 (\tan \sqrt{\lambda} \alpha)/\sqrt{\lambda} & \text{if $\lambda>0$},\\
                                 (\tanh \sqrt{|\lambda|} \alpha)/\sqrt{|\lambda|} & \text{if $\lambda<0$}.
                                \end{cases}
\end{equation*}

\begin{definition}[$\be$-polarity]
    Let $\lambda>-1$, $\lambda\neq 0$, and fix $\be\in\Sp^d(\lambda)$. Then the $\be$-polar body $K^\be\subset \Sp^d(\lambda)$ of a convex body $K\subset \Sp^d(\lambda)$ that contains $\be$ in the interior and
    \begin{enumerate}
     \item[i)] if $\lambda>0$, is contained in the interior of the open half-sphere $\Sp^d_{+}(\lambda)$ with center in $\be$, or,
     \item[ii)] if $-1<\lambda<0$, contains the geodesic ball $B_{\be}\left(R(\lambda)\right)$ in the interior, where $\tan_{\lambda} R(\lambda)=\sqrt{|\lambda|}$,
    \end{enumerate}
    is defined by
    \begin{equation*}
        K^{\be} = (g^{\lambda}_{\be})^{-1} (g_{\be}^\lambda(K)^\circ).
    \end{equation*}
    The gnomonic projection $g^\lambda_{\be}: \Sp^d_+(\lambda)\to \R^d$ for $\lambda>0$, respectively $g^\lambda_{\be}:\Sp^d(\lambda)\to \frac{1}{\sqrt{|\lambda|}} \interior B_2^d\subset \R^d$ for $\lambda<0$, is the diffeomorphism defined by
    \begin{equation*}
        g^\lambda_{\be}(\bu) = \begin{cases}
                            \frac{1}{\lambda} \frac{\bu}{\bu\cdot \be} - \be & \text{if $\lambda>0$},\\
                            \frac{1}{\lambda} \frac{\bu}{\bu\circ \be} - \be & \text{if $\lambda<0$},
                         \end{cases}
    \end{equation*}
    see \cite[Sec.\ 3.2]{BW:2018}.
\end{definition}

For $\be = \frac{1}{\sqrt{|\lambda|}}\be_{d+1} \in \Sp^d(\lambda)$ we have that
\begin{equation*}
    g^\lambda_{\be}(\bu) = \frac{1}{\sqrt{|\lambda|}} \left(\frac{u_1}{u_{d+1}},\dotsc,\frac{u_d}{u_{d+1}}\right).
\end{equation*}
Note that for $\lambda<0$ and a convex body $K\subset \Sp^d(\lambda)$ that contains $B_{\be}(R(\lambda))$ we have 
\begin{equation*}
    \interior g_{\be}^\lambda(K) \supset g_{\be}^\lambda(B_{\be}(R(\lambda))) = (\tan_{\lambda}R(\lambda)) B_2^d = \sqrt{\left|\lambda\right|} B_2^d.
\end{equation*}
Thus $g_{\be}^\lambda(K)^\circ \subset \frac{1}{\sqrt{|\lambda|}} \interior B_2^d$ and therefore $K^{\be} = (g^{\lambda}_{\be})^{-1}(g_{\be}^\lambda(K)^\circ)$ is well-defined.

Like the usual polarity $^\circ$ on convex bodies in $\R^d$ containing the origin in the interior, $\be$-polarity is a order-reversing involution. 

\begin{remark}
    The $\be$-polarity can also be defined via the duality on convex cones as follows: For $\lambda>0$ consider $\S^d(\lambda)\subset \R^{d+1}$ as model for $\Sp^d(\lambda)$. A proper convex body $K\subset \S^d(\lambda)$ determines the closed convex cone $\rad K = \{r\bx:\bx\in K \text{ and $r\geq 0$}\}$ such that $K = (\rad K)\cap \S^d(\lambda)$. If $K$ contains $\be\in \S^d(\lambda)$ in the interior and is contained in the open half-sphere $\S^d(\lambda)\cap \interior H^+(\be)$, then
    \begin{equation}\label{eqn:e-polar_sphere}
        K^\be = R((\rad K)^*) \cap \S^d(\lambda),
    \end{equation}
    where $C^*=\{\by\in\R^{d+1} : \bx\cdot \by \geq 0 \text{ for all $\bx \in C$} \}$ is the dual cone and $R\in \mathrm{GL}(d+1)$ is the linear map determined by $R(\be)=\frac{1}{\lambda}\be$ and $R(\bv)=\bv$ for all $\bv$ orthogonal to $\be$.
    For the proof of this fact we observe that the gnomonic projection $g^\lambda_{\be}(K)$ is determined by
    \begin{equation*}
        (\rad K)\cap (\be + \be^\bot) = g^\lambda_{\be}(K)+\be,
    \end{equation*}
    where we identify $\R^d$ with the hyperplane $\be^\bot = \{\by\in \R^{d+1}: \by\cdot \be = 0\}$, see also Figure \ref{fig:e-polar_sphere}.
    Note that $\lambda\be \in \S^d(1/\lambda)$ and by \eqref{eqn:lambda_polar} we have $g^{1/\lambda}_{\lambda\be}(K^*) = -g^\lambda_{\be}(K)^\circ$. Thus
    \begin{align*}
        (\rad K^\be)\cap (\be+\be^\bot) 
        &= g^\lambda_{\be}(K^\be)+\be
        = (g^\lambda_{\be}(K))^\circ+\be\\
        &= -g^{1/\lambda}_{\lambda\be}(K^*)+\be
        = R(-g^{1/\lambda}_{\lambda\be}(K^*)+\lambda\be)\\
        &= R((\rad K)^*\cap (\lambda\be+\be^\bot))
        = R((\rad K)^*) \cap (\be+\be^\bot).
    \end{align*}
    Thus $\rad K^\be = R((\rad K)^*)$ and \eqref{eqn:e-polar_sphere} follows.
    
    Similarly, for $-1<\lambda<0$, we consider $\H^d(|\lambda|)\subset \R^{d,1}$ as model for $\Sp^d(\lambda)$. A convex body $K\subset \H^d(|\lambda|)$ determines a closed convex cone $(\rad K)=\{r\bx: \bx\in K \text{ and $r\geq 0$}\}\subset L^d_+$ in $\R^{d,1}$ such that $K=(\rad K)\cap \H^d(|\lambda|)$. If $K\subset \H^d(|\lambda|)$ is a convex body that contains the geodesic ball $B_{\be}(R(\lambda))$ in the interior, then
    \begin{equation}\label{eqn:e-polar_hyperbolic}
        K^\be = R((\rad K)^*) \cap \H^d(|\lambda|),
    \end{equation}
    where $C^*=\{\by\in\R^{d,1} : \bx\circ \by \geq 0\}$ is the dual cone in $\R^{d,1}$ and $R\in\mathrm{GL}(d+1)$ is the linear map determined by $R(\be)=\frac{1}{|\lambda|}\be$ and $R(\bv)=\bv$ for all $\bv\in\R^{d,1}$ such that $\bv\circ\be=0$. Then \eqref{eqn:e-polar_hyperbolic} follows similar to \eqref{eqn:e-polar_sphere} where we identify $\R^d$ with $\be^\bot=\{\by\in\R^{d,1}:\by\circ\be=0\}$ and note that $(\rad K)\cap (\be+\be^\bot) = g^\lambda_{\be}(K)+\be$ and $g^{1/\lambda}_{|\lambda|\be}(K^*) = g^\lambda_\be(K)^\circ$.
\end{remark}

\begin{figure}[t]
    \centering
    \begin{tikzpicture}
        \def\al{38};
        \def\bl{115};
    
        \draw (0,0) circle(2);
        \draw (0,0) circle(0.5);
        
        \draw[gray,->] (0,0) -- (0,3.5);% node[left] {$\frac{\be}{\|\be\|_2}$};
        \draw[gray] (2.5,0) -- (-2.5,0) node[left] {$\be^\bot$};
        
        \draw[thin, gray] (\al:0.95) arc (\al:\al-90:0.95);
        \draw[thin, gray] (\al:1) arc (\al:\al-90:1);
        
        \draw[thin, gray] (\bl:0.95) arc (\bl:\bl+90:0.95);
        \draw[thin, gray] (\bl:1) arc (\bl:\bl+90:1);
        
        \draw ({3.3*cot(\bl-90)},3.3) -- (0,0) -- ({3.3*cot(90+\al)},3.3) node[left] {$(\rad K)^*$};
        \fill[opacity=0.2] ({3.3*cot(\bl-90)},3.3) -- (0,0) -- ({3.3*cot(90+\al)},3.3);

        \draw[very thick, red] (\bl:2) arc (\bl:\al:2) node[above] {$K$};
        \draw[red] ({3*cot(\bl)},3) -- (0,0) -- ({3*cot(\al)},3) node[right] {$\rad K$};
        \fill[red, opacity=0.3] ({3*cot(\bl)},3) -- (0,0) -- ({3*cot(\al)},3)--cycle;
        \draw[very thick, red] ({2*cot(\al)},2)-- ({2*cot(\bl)},2);

        %\draw[blue] ({\al-90}:0.5) arc ({\al-90}:{\bl-270}:0.5);
        %\draw[dashed] ({\al-90}:2.2) -- (0,0) -- ({\bl-270}:2.2) node[left] {$-(\rad K)^*$};
        \draw ({-2.2*cot(\al-90)},-2.2) -- (0,0) -- ({-2.2*cot(\bl-270)},-2.2) node[left] {$-(\rad K)^*$};
        \fill[opacity=0.2] ({-2.2*cot(\al-90)},-2.2) -- (0,0) -- ({-2.2*cot(\bl-270)},-2.2)--cycle;
        %\draw[blue] ({2*tan(\al-90)},-2) -- ({2*tan(\bl-270)},-2);
        %\draw ({\bl-90}:2.2) -- (0,0) -- ({90+\al}:2.2) node[left] {$(\rad K)^*$};
        
        \draw[very thick] ({\bl-90}:0.5) arc ({\bl-90}:{90+\al}:0.5) node[below left, yshift=0.2cm, xshift=0.1cm] {$K^*$};
        \draw[very thick] ({0.5*cot(90+\al)},0.5) -- ({0.5*cot(\bl-90)},0.5);
        
        \draw[dashed] ({0.5*cot(90+\al)},0.5) -- ({0.5*cot(90+\al)},2);
        \draw[dashed] ({0.5*cot(\bl-90)},0.5) -- ({0.5*cot(\bl-90)},2);
        
        %\draw[blue] ({0.5*cot(90+\al)},2) -- (0,0) -- ({0.5*cot(\bl-90)},2) node[left] {$\rad K^\be = R(K^*)$};
        %\draw[blue] ({atan(4*tan(\bl-90))}:2.5) -- (0,0) -- ({180+atan(4*tan(90+\al))}:2.5) node[left] {$\rad K^\be = R(K^*)$};
        \draw[blue] ({2.7*0.25*cot(90+\al)},2.7) -- (0,0) -- ({2.7*0.25*cot(\bl-90))},2.7) node[right] {$\rad K^\be$};
        \fill[blue, opacity=0.3] ({2.7*0.25*cot(\bl-90))},2.7) -- (0,0) -- ({2.7*0.25*cot(90+\al)},2.7);
        \draw[very thick, blue] ({0.5*cot(90+\al)},2) -- ({0.5*cot(\bl-90)},2);
        \draw[very thick, blue] ({180+atan(4*tan(90+\al))}:2) arc ({180+atan(4*tan(90+\al))}:{atan(4*tan(\bl-90))}:2)
            node[below left] {$K^\be$};
        
        \node[red] at (4.2,2)   {$(\rad K)\cap (\be+\be^\bot)$};
        \node at (3,0.4) {$(\rad K)^*\cap (\lambda\be+\be^\bot)$};
        
        \node[right] at (-45:2) {$\S^d(\lambda)$};
        \node[below] at (-100:0.5) {$\S^d(1/\lambda)$};
        
        \fill (0,2) circle(0.05) node[above right] {$\be$};

    \end{tikzpicture}
    \caption{Sketch for the definition of $\be$-polar body $K^\be$ of a convex body $K\subset \S^d(\lambda)$ such that $\be$ is in the interior of $K$ and $K$ is contained in the open half-sphere with center in $\be$.}
    \label{fig:e-polar_sphere}
\end{figure}
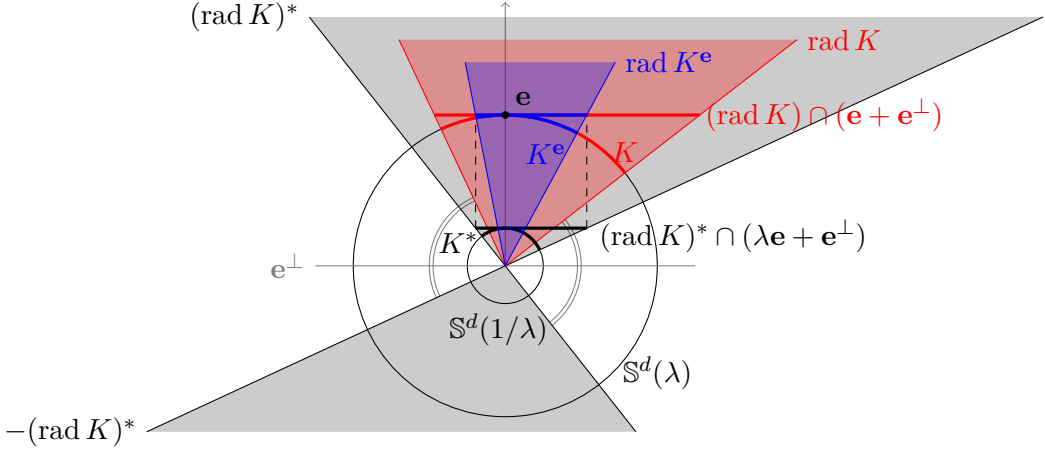

We now consider the floating body conjugated by $\be$-polarity, that is, $\cF_\delta^{\lambda,\be} K := (\cF_{\delta}^\lambda K^\be)^\be$.

\begin{theorem}\label{thm:main_analytic}
     Let $\lambda\neq 0$, $\be\in\Sp^d(\lambda)$ and let $K\subset \Sp^d(\lambda)$ be a convex body of class $\cC^2$ that contains $\be\in\Sp^d(\lambda)$ in the interior. We further assume, that if $\lambda>0$, $K$ is contained in the open half-space centered at $\be$, and, if $-1 < \lambda < 0$, then it contains the closed geodesic ball around $\be$ of radius $R(\lambda)$ in the interior, where $\tan_\lambda R(\lambda) = \sqrt{|\lambda|}$. Then
    \begin{equation*}
        a_d^{\frac{2}{d+1}} \lim_{\delta\to 0^+} \frac{\Vol_d^\lambda(\mathcal{F}_\delta^{\lambda,\be}K) - \Vol_d^\lambda(K)}{\delta^{\frac{2}{d+1}}} 
        = \Omega_{-d/(d+2)}^{\lambda,\be}(K),
    \end{equation*}
    where
    \begin{equation*}
        \Omega_{-d/(d+2)}^{\lambda,\be}(K) :=\int_{\bd K} 
            \left(\frac{H^{\lambda}_{d-1}(K,\bu)}{f^\lambda_\be(K,\bu)^{d+1}}\right)^{-\frac{1}{d+1}}
                f^\lambda_\be(K,\bu) \, \Vol_{\bd K}^\lambda(\dint \bu)
    \end{equation*}
    for
    \begin{equation*}
        f^\lambda_{\be}(K,\bu) := \sqrt{\left|\frac{\lambda + (\tan_{\lambda} d_\lambda(\be,H(K,\bu)))^2}{1+\lambda (\tan_{\lambda} d_\lambda(\be,H(K,\bu)))^2}\right|}.
        %= \sqrt{\lambda (\cos_{\lambda} d_\lambda(e_{d+1},H(K,u)))^2 + (\sin_{\lambda}d_\lambda(e_{d+1},H(K,u))^2}
    \end{equation*}
    Here $H(K,\bu)$ denotes the tangent hyperplane to $K$ at $\bu$, $d_{\lambda}(\be,H(K,\bu))$ is the minimal geodesic distance in $\Sp^d(\lambda)$ of $\be$ to the points in $H(K,\bu)$.
\end{theorem}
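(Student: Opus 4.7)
The plan is to run the same reduction used in Theorems \ref{thm:sphere_main}, \ref{thm:hyperbolic_main} and \ref{thm:main_lambda}: pass to the projective model by the gnomonic projection $g^\lambda_{\be}$ centered at $\be$, recognize $\cF^{\lambda,\be}_\delta K$ as a Euclidean floating body conjugated by classical polarity, and then invoke Theorem \ref{thm:main_weighted} together with Lemma \ref{lem:integral_formula}. The essential new feature compared with Theorem \ref{thm:main_lambda} is that the very definition $K^{\be}=(g^\lambda_{\be})^{-1}(g^\lambda_{\be}(K)^\circ)$ makes the polarity in the projective model exactly Euclidean polarity with respect to the origin $\bo=g^\lambda_{\be}(\be)$. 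Consequently the weighted and polar constructions on $\R^d$ match perfectly, with the radial weight $\phi_\lambda(\bx)=|1+\lambda\|\bx\|_2^2|^{-(d+1)/2}$ playing the role of \emph{both} $\phi$ and $\psi$ in Theorem \ref{thm:main_weighted}.

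First, writing $\overline{K}:=g^\lambda_{\be}(K)$, the hypotheses on $K$ ensure that $\overline{K}$ and $\overline{K}^\circ$ are Euclidean convex bodies of class $\cC^2_+$ lying in the domain of $(g^\lambda_{\be})^{-1}$ and each containing $\bo$ in the interior; moreover floating bodies commute with the gnomonic projection under the weight $\phi_\lambda$, so $g^\lambda_{\be}(\cF_\delta^{\lambda,\be} K)=((\overline{K}^\circ)_\delta^{\phi_\lambda})^\circ$. Applying Theorem \ref{thm:main_weighted} to the body $\overline{K}^\circ$ with $\phi=\psi=\phi_\lambda$, then transforming the resulting integral over $\bd\overline{K}^\circ$ to an integral over $\bd\overline{K}$ by Lemma \ref{lem:integral_formula} together with the identity \eqref{eqn:kappa_0-inv}, I obtain
\begin{equation*}
    a_d^{\frac{2}{d+1}}\lim_{\delta\to 0^+}\frac{\Vol_d^{\lambda}(\cF_\delta^{\lambda,\be}K)-\Vol_d^{\lambda}(K)}{\delta^{\frac{2}{d+1}}}
    =\int_{\bd\overline{K}} \kappa_o(\overline{K},\bx)^{-\frac{1}{d+1}}\,\phi_\lambda(\bx^\circ)^{-\frac{2}{d+1}}\phi_\lambda(\bx)\, C_{\overline{K}}(\dint\bx).
\end{equation*}

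The remaining step is to rewrite this Euclidean integral in the intrinsic terms of $K\subset\Sp^d(\lambda)$. Using \eqref{eqn:lambda_gauss}, the identity $\|\bx^\circ\|_2=1/(\bx\cdot\bn_{\overline{K}}(\bx))$, and the surface-area relation $\Vol^\lambda_{\bd \overline{K}}(\dint\bx)=\sqrt{1+\lambda(\bx\cdot\bn_{\overline{K}}(\bx))^2}\,(1+\lambda\|\bx\|_2^2)^{-d/2}\,\mathcal{H}^{d-1}(\dint\bx)$ from the proof of Theorem \ref{thm:main_lambda}, all the $\phi_\lambda$ factors, cone-volume weights and surface-area Jacobians telescope and leave precisely
\begin{equation*}
    H_{d-1}^\lambda(K,\bu)^{-\frac{1}{d+1}}\,\frac{|\lambda+(\bx\cdot\bn_{\overline{K}}(\bx))^2|}{|1+\lambda(\bx\cdot\bn_{\overline{K}}(\bx))^2|}\,\Vol_{\bd K}^{\lambda}(\dint\bu).
\end{equation*}
To identify the middle factor with $f^\lambda_\be(K,\bu)^2$, I will verify $\tan_\lambda d_\lambda(\be,H(K,\bu))=\bx\cdot\bn_{\overline{K}}(\bx)$. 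Since $g^\lambda_{\be}$ sends geodesics through $\be$ to straight lines through $\bo$ and satisfies $\tan_\lambda d_\lambda(\be,\cdot)=\|g^\lambda_{\be}(\cdot)\|_2$, the function $d_\lambda(\be,\cdot)$ is a strictly increasing function of the Euclidean norm in the projective model. Hence the geodesic foot of perpendicular from $\be$ to $H(K,\bu)$ projects to the Euclidean foot of perpendicular from $\bo$ to $H(\overline{K},\bx)$, whose Euclidean distance to the origin is $\bx\cdot\bn_{\overline{K}}(\bx)$.

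The main obstacle is not a new analytic estimate---Theorem \ref{thm:main_weighted} already supplies the exchange of limit and integral---but careful bookkeeping: the assorted powers of $(1+\lambda\|\bx\|_2^2)$ and $(1+\lambda(\bx\cdot\bn_{\overline{K}}(\bx))^2)$ arising from $\phi_\lambda$, $\phi_\lambda^{-2/(d+1)}$, the ratio $H_{d-1}^\lambda/H_{d-1}$ in \eqref{eqn:lambda_gauss}, and the surface-area Jacobian must cancel out to leave exactly the geometrically meaningful factor $f^\lambda_{\be}(K,\bu)^2$. The only genuinely new geometric ingredient is the identification of $f^\lambda_{\be}$ via perpendicular feet, which hinges on the fact that $d_\lambda(\be,\cdot)$ is a radial function of $\|g^\lambda_{\be}(\cdot)\|_2$ even though the gnomonic projection is not conformal away from $\be$.
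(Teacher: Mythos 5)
Your proposal follows the same approach as the paper's proof: project via $g^\lambda_{\be}$, apply Theorem \ref{thm:main_weighted} with $\phi=\psi=\phi_\lambda$, transform by Lemma \ref{lem:integral_formula}, and convert back to intrinsic quantities via \eqref{eqn:lambda_gauss} and the surface-area Jacobian. The only addition is that you supply a justification for the identity $\tan_\lambda d_\lambda(\be,H(K,\bu))=\bx\cdot\bn_{\overline{K}}(\bx)$ (via monotonicity of $d_\lambda(\be,\cdot)$ in $\|g^\lambda_{\be}(\cdot)\|_2$), which the paper merely asserts.
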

\begin{proof}
    We may assume w.l.o.g.\ that $\be=\frac{1}{\sqrt{|\lambda|}}\be_{d+1}$ so that $g^\lambda(\be)=o\in\R^d$ for all $\lambda$.
    Note that if $-1 < \lambda<0$, then, since $\tan_\lambda R(\lambda)=\sqrt{|\lambda|}$, we have $\interior g^\lambda(K)=\overline{K}\supset \sqrt{|\lambda|} B_2^d$. This yields $g^\lambda(K)^\circ \subset \frac{1}{\sqrt{|\lambda|}} \interior B_2^d$ and therefore we may apply $(g^\lambda)^{-1}$.
    Using the gnomonic projection $g^\lambda_{\be}$, Lemma \ref{lem:integral_formula} and Theorem \ref{thm:main_weighted} we derive that
    \begin{align*}
        a_d^{\frac{2}{d+1}} \lim_{\delta\to 0^+} \frac{\Vol_d^\lambda(\mathcal{F}^{\lambda,\be}_\delta K) - \Vol_d^\lambda(K)}{\delta^{\frac{2}{d+1}}}
        &=a_d^{\frac{2}{d+1}} \lim_{\delta\to 0^+} 
            \frac{\Vol_d^{\phi_\lambda}(((\overline{K}^\circ)_\delta^{\phi_\lambda})^\circ) - \Vol_d^{\phi_\lambda}(\overline{K})}{\delta^{\frac{2}{d+1}}}\\
        & =  \int_{\bd \overline{K}} \kappa_o(\overline{K},\bx)^{-\frac{1}{d+1}}
             \phi_{\lambda}(\bx) \phi_{\lambda}(\bx^\circ)^{-\frac{2}{d+1}}
            \, C_{\overline{K}}(\dint \bx)\\
        & =  \int_{\bd \overline{K}} H_{d-1}^\lambda(\overline{K},\bx)^{-\frac{1}{d+1}} \left| \frac{1+\lambda\|\bx^\circ\|_2^2}{\lambda+\|\bx^\circ\|_2^2} \right|
            \, \Vol_{\bd \overline{K}}^\lambda(\dint \bx).
    \end{align*}
    Now $\tan_{\lambda} d_\lambda(\be,H(K,\bu)) = \bx\cdot \bn_{\overline{K}}(\bx) = \|\bx^\circ\|^{-1}_2$, and therefore, for $g^\lambda_{\be}(\bu)=\bx$, we have that
    \begin{equation}\label{eqn:f_lambda}
        f^\lambda_{\be}(K,\bu) = \sqrt{\left|\frac{1+\lambda\|\bx^\circ\|_2^2}{\lambda + \|\bx^\circ\|_2^2}\right|}. \qedhere
    \end{equation}
\end{proof}

Fixing $\overline{K}\subset\R^d$ in the projective model of $\Sp^d(\lambda)$ such that $\be = o$, we note that \eqref{eqn:f_lambda} yields
\begin{equation*}
    \lim_{\lambda\to 0} f^\lambda_{o}(\overline{K},\bx) = \frac{1}{\|\bx^\circ\|_2} = \bx \cdot \bn_{\overline{K}}(\bx),
\end{equation*}
and
\begin{equation*}
    \lim_{\lambda \to 0} \Omega_{-d/(d+2)}^{\lambda,o}(\overline{K}) = \as_{-d/(d+2)}(K).
\end{equation*}
Hence Theorem \ref{thm:main_analytic} gives a real-analytic extension of Theorem \ref{thm:main_weighted} for uniform weights. Notice also, that for $\lambda=\pm 1$ we have $f^\lambda_{\be}(K,\cdot) \equiv 1$ and therefore there is no dependence on $\be$ and Theorem \ref{thm:main_analytic} gives the same result as Theorem \ref{thm:sphere_main}.

\medskip
\textbf{Acknowledgments.} Elisabeth Werner was supported by NSF grant \texttt{DMS-2103482}.

\medskip
\parindent=0pt

\bigskip
\begin{samepage}
	Florian Besau\\
	Institute of Discrete Mathematics and Geometry\\
	Technische Universität Wien (TU Wien)\\
	Wiedner Hauptstrasse 8--10, 1040 Vienna, Austria\\
	e-mail: florian.besau@tuwien.ac.at
\end{samepage}

\medskip 
\begin{samepage}
	Elisabeth M.\ Werner\\
	Department of Mathematics, Applied Mathematics and Statistics\\
	Case Western Reserve University (CWRU)\\
	10900 Euclid Avenue, Cleveland, Ohio 44106, USA\\
	e-mail: elisabeth.werner@case.edu
\end{samepage}

\end{document}